\documentclass[a4paper, 10pt, final]{article} 
\usepackage{a4wide}
\usepackage[notcite,notref]{showkeys}
\usepackage{graphicx}
\usepackage{hyperref}
\usepackage{amsfonts,amsthm,amsmath,amssymb, mathtools}
\usepackage{cleveref}
\usepackage{soul}
\usepackage{tikz}
\usepackage{amsmath}
\usepackage{pgfplots}
\pgfplotsset{compat=1.18}
\usetikzlibrary{intersections,arrows,backgrounds}
\usepackage{pgf}
\usepackage{enumitem}
%

\newtheorem{example}{Example}[section]
\newtheorem{definition}{Definition}[section]
\newtheorem{theorem}{Theorem}[section]
\newtheorem{proposition}{Proposition}[section]
\newtheorem{lemma}{Lemma}[section] 

\newtheorem{remark}{Remark}[section]

\allowdisplaybreaks \numberwithin{equation}{section}

\newcommand{\R}{\mathbb{R}} \newcommand{\N}{\mathbb{N}}

\newcommand{\abs}[1]{\left|#1\right|}

 \newcommand{\pt}{\partial_t}
 \newcommand{\px}{\partial_x }

\newcommand{\LL}[1]{\mathbf{L^{#1}}}

\newcommand{\BV}{\mathbf{BV}}

\newcommand{\CC}[1]{\mathbf{C^{#1}}}
\newcommand{\CCc}[1]{\mathbf{C^{#1}_c}}

\DeclareMathOperator{\tv}{TV}
\DeclareMathOperator{\os}{Osc}

{%

  \begin{enumerate}}%
  {\end{enumerate}}

%
{%

  \begin{enumerate}}%
  {\end{enumerate}}

\begin{document}

\title
{On the structure of optimal solutions\\ of conservation laws at a junction\\ with one incoming and one outgoing arc}

\author{F.~Ancona$^1$ \and A.~Cesaroni$^2$ \and G. M. Coclite$^3$
  \and M. Garavello$^4$}

\maketitle

\footnotetext[1]{Dipartimento di Matematica ``Tullio Levi-Civita'',
  Università di Padova, Via Trieste 63, 35121 Padova, Italy.\hfill\\
  Email address: \texttt{ancona@math.unipd.it}
  URL: \texttt{http://www.math.unipd.it/{\~{}}ancona/}}

\footnotetext[2]{Dipartimento di Matematica ``Tullio Levi-Civita'',
  Università di Padova, Via Trieste 63, 35121 Padova, Italy.\hfill\\
  Email address: \texttt{annalisa.cesaroni@unipd.it}
  URL: \texttt{http://www.math.unipd.it/{\~{}}acesar/}}

\footnotetext[3]{Dipartimento di Meccanica, Matematica e Management,
  Politecnico di Bari, Via E.~Orabona 4,  I--70125 Bari, Italy.\hfill\\
  Email address: \texttt{giuseppemaria.coclite@poliba.it}
  URL: \texttt{https://sites.google.com/site/coclitegm/}}

\footnotetext[4]{Dipartimento di Matematica e Applicazioni,
  Università di Milano Bicocca,
  Via  R. Cozzi 55, I--20125 Milano, Italy.\hfill\\
  Email address: \texttt{mauro.garavello@unimib.it}
  URL: \texttt{https://sites.google.com/site/maurogaravello/}}


 \begin{abstract}
 We consider a min-max problem 
for strictly concave conservation laws on a 1-1 network, with inflow controls acting at the junction. 
We investigate the minimization problem for a functional measuring the total variation of the flow of the solutions at the node,
among those solutions that maximize the time integral of the flux.
To formulate this problem we establish a regularity result showing that
the total variation of the boundary-flux of the solution of an initial-boundary value problem 
is controlled by the total variation of the initial datum
and of the flux of the boundary datum. 
In the case the initial datum  is monotone,
we show that the flux 
of the entropy weak solution at the node 
provides 
an optimal inflow control for this min-max problem.
We also exhibit  two prototype examples showing that, in the case where the 
initial datum is not monotone, the flux 
of the entropy weak solution is no more optimal. 

   \noindent\textbf{Keywords:} Conservation laws, entropy solution,
   traffic models, networks, weak solutions.
   
   \medskip

   \noindent\textbf{MSC~2020:} 35F25, 35L65, 90B20.
 \end{abstract}

\section{Introduction}

In this note, we consider an optimal control problem for scalar conservation laws evolving on a 1–1 network consisting of a junction (node) with one incoming and one outgoing edge.
We model the 
incoming edge  with the half-line $I_1\doteq (-\infty, 0)$, and
the outgoing one with the half-line $I_2\doteq (0, +\infty)$,
so that the junction is   sitting at  $x=0$.
On each edge $I_i$ the evolution of the state variable $u_i(t,x)$ is governed by a
scalar conservation law 
supplemented with some initial condition $\overline u_i\in  \mathbf{L^\infty}(I_i; \R)$,
$i=1,2$.
This yields to study the Cauchy problems
on the two semilines $I_i$, $i=1,2$:
\begin{equation}
  \label{incoming} 
  \begin{cases} \pt u_1 +\px f(u_1)=0, \qquad & x<0, \, t>0,\\ 
    u_1(0,x)= \overline u_1(x), & x<0,\end{cases}
\end{equation} 
\begin{equation}\label{outgoing}
  \begin{cases} \pt u_2 +\px f(u_2)=0,  \qquad & x>0, \, t>0,\\ 
    u_2(0,x)= \overline u_2(x), & x>0,\end{cases}
\end{equation} 
where we assume that the flux  $f:\R\to\R$
is a twice continuously differentiable, (uniformly) strictly concave map.
The equations~\eqref{incoming}, \eqref{outgoing} are usually coupled through a Kirkhoff type condition at the boundary $x=0$:
\begin{equation}\label{conservation}
f(u_1(t,0))= f(u_2(t,0))
\qquad\textrm{for a.e.} \ \ t>0.
\end{equation}
Here, and throughout the following, we adopt the notation
$u_1(t,0)\doteq \lim_{x\to 0^-} u_1(t,x)$,
$u_2(t,0)\doteq \lim_{x\to 0^+} u_2(t,x)$,
for the one-sided limit 
of $u_i(t,\cdot)$ at $x=0$.
The equation~\eqref{conservation} expresses the conservation of the total mass through the node.
Note that the system~\eqref{incoming}, \eqref{outgoing}, and~\eqref{conservation}, 
admits in general infinitely many entropy admissible solutions.
One may recover the uniqueness of solutions to the
nodal Cauchy problem~\eqref{incoming}, \eqref{outgoing}, \eqref{conservation},
requiring that the flux trace at the junction
\begin{equation}\label{gammat-0} \gamma(t)\doteq f(u_1(t,0)) = f(u_2(t, 0)),
\qquad t>0,\end{equation}
satisfies some further condition. 
Such conditions are expressed in terms of time-dependent germs \cite{akr,cardsolo}, or introducing the notion of flux-limited solutions 
for Hamilton-Jacobi equations~\cite{im} (exploiting the equivalence between the two classes of equations~\cite{card1}), 
or are obtained via vanishing viscosity approximations~\cite{cg}.
Here, we follow the different perspective introduced in~\cite{accg}
to analyze control problems for conservation laws
models at a junction with $n$ incoming and $m$ outgoing edges. Namely,   
we regard the flux trace $\gamma$ in~\eqref{gammat-0} as an {\it inflow control}
acting at the junction.
More precisely, for a fixed $T>0$, 
we consider
$\gamma\in \mathbf{L^\infty}((0,T);\R)$
as an {\it admissible inflow control}
if there exists a pair of boundary data 
$k_1, k_2 \in \mathbf{L^\infty}\left((0,T); \R \right)$,
such that, letting $u_1:[0,T]\times (-\infty,0]\to\R$, $u_2:[0,T]\times [0,+\infty)\to\R$, be the entropy admissible weak solutions of the mixed initial boundary value problems
\begin{equation}
  \label{bdry-incoming} 
  \begin{cases} \pt u_1 +\px f(u_1)=0, \qquad & x<0, \, t>0,\\ 
    u_1(0,x)= \overline u_1(x), & x<0,\\
    u_1(t,0)= k_1(t), & t>0,\end{cases}
\end{equation} 
\begin{equation}
  \label{bdry-outgoing} 
  \begin{cases} \pt u_2 +\px f(u_2)=0, \qquad & x>0, \, t>0,\\ 
    u_2(0,x)= \overline u_2(x), & x>0,\\
    u_2(t,0)= k_2(t), & t>0,\end{cases}
\end{equation} 
the corresponding fluxes satisfy the  
constraint~\eqref{gammat-0}.
The Dirichlet boundary conditions in~\eqref{bdry-incoming}, \eqref{bdry-outgoing}
are 
interpreted in the relaxed sense 
of Bardos, Le Roux, N\'ed\'elec~\cite{BLN} which, for strictly concave fluxes, 
are equivalent to the conditions (iii)
stated in Definition~\ref{def-ent-sol-ibvp} 
(in Section~\ref{sec:prelim}) of entropy admissible solution of the mixed initial-boundary
value problem (cfr.~\cite{lf}).
Given a fixed initial datum
\begin{equation}
\label{indatum}
    \overline u(x)=
    \begin{cases}
        \overline u_1(x),\quad&\text{if}\quad x<0,
        \\
        \noalign{\smallskip}
        \overline u_2(x),\quad&\text{if}\quad x>0,
    \end{cases}
\end{equation}
for any admissible inflow control 
$\gamma\in \mathbf{L^\infty}((0,T);\R)$,
we will denote by 
\begin{equation}\label{ugamma} 
  u(t,x; \gamma)\doteq \begin{cases}u_1(t,x),\quad&\text{if}\quad x<0, \\ 
\noalign{\smallskip}
u_2(t,x), \quad&\text{if}\quad x>0,\end{cases}
\end{equation} 
the unique solution of the nodal Cauchy problem 
~\eqref{incoming}, \eqref{outgoing}, \eqref{conservation}, \eqref{gammat-0}.

Optimization issues for conservation laws evolving on networks has attracted 
an increasing attention in the last two decades (see the monographs~\cite{bdggp,bcghp,g-h-p} and the references therein). 
The interest on these problems is motivated by a wide range of applications in  different areas
such as data flow and telecommunication~\cite{cmpr}, 
gas or water pipelines~\cite{cghs,hlmss}, production 
processes~\cite{Da-G-H-P, Da-M-H-P, hfy, LM-A-H-R}, 
biological resources~\cite{CoGa}, and 
blood circulation~\cite{ca-d-d-m}.
In particular, optimal control problems for conservation laws with concave flux 
have found a natural application in traffic flow models, firstly introduced in the seminal papers by Lighthill, Whitham and 
Richards (LWR model)~\cite{li-whi,ri}.
Several optimization problems related to different
traffic performance indexes have been considered in the literature 
to  improve the mean travel time in
a given road segment~\cite{am1,am2,cdpr1,goatin-piacentini,tk},  
to diminish
the queue length due to a red light or stop sign~\cite{colombo-rossi},
to minimise stop-and-go waves~\cite{cgr,col-gro,
tsmh}, 
or to get a fuel consumption reduction~\cite{ac, ramadan}.

Within the above setup,
in the general case of a junction with $n$ incoming and $m$ outgoing edges  
we have established the existence of optimal inflow controls 
for
various functionals depending on the 
value of the solution at
the node~\cite{accg},
and along the incoming or outgoing 
edges~\cite{accg2}. 

The object of this note is to derive a characterization of optimal inflow controls for a class
of optimal control problems on  a 1–1 network.
Namely, we first consider the problem of maximizing the total flux at the junction
\begin{equation}\label{max-J-0}
  \sup_{\gamma \in \mathcal{U}} \int_0^T 
  f(u_1(t, 0;\gamma)) dt
  = \sup_{\gamma \in {\mathcal{U}}} \int_0^T   \gamma(t) \  dt,
\end{equation}
where $\mathcal{U}$ denotes the class of admissible inflow controls.
Our first result (Theorem~\ref{maxtheo1}) shows that, letting $\widetilde u :[0,T]\times \R\to\R$
denote the entropy weak solution of the Cauchy problem
\begin{equation}
  \label{eq:CP-classic}
\begin{cases} \pt u + \px f(u) = 0, \quad & x \in \R, \, t > 0,\\ 
    u(0,x)= \overline u(x), & x \in \R,\end{cases}
\end{equation}
the corresponding flux $\widetilde\gamma\doteq f\circ \widetilde u(\cdot, 0)$ provides a maximizer of~\eqref{max-J-0}.
Since the optimization problem~\eqref{max-J-0} admits in general more than one maximizer, we next address a
minimization problem for a functional 
$\mathcal F$ measuring the total variation of the solution at the node (see definition~\eqref{tv-def})
\begin{equation}\label{maxmin-0}
  \inf_{\gamma \in \mathcal{U}_{max}} \mathcal F\big(u(\cdot,\cdot\,;\gamma)\big),
\end{equation} 
where $\mathcal{U}_{max}$ denotes the set of the maximizers for~\eqref{max-J-0}. 
The goal of this min-max problem is to keep as small as possible the oscillations of the 
flux of the solution at the node among those solutions that maximize the time integral of the  flux. 
To formulate this
minimization problem
we need first to ensure 
that the flux of the solutions of the initial boundary value problems~\eqref{bdry-incoming}, \eqref{bdry-outgoing}, 
evaluated at the boundary $x=0$, is a function of bounded variation ($\mathbf{BV}$), 
provided that the initial data $\overline{u}_i$,
and the
flux of the boundary data $f\circ k_i$,  are $\BV$ functions as well. 
This regularity result 
is established in Theorem~\ref{bvfluxbdrytrace}
of the Appendix.
 
The main results of this note (Theorem~\ref{mon}, Theorem~\ref{mon2}) show that,
in the case where the initial datum $\overline u$ is monotone,
the flux $\widetilde\gamma$
of the entropy weak solution of~\eqref{eq:CP-classic} 
provides 
an optimal inflow control for the min-max problem~\eqref{maxmin-0}.
On the other hand, in Section~\ref{sectionesempi} we discuss  two prototype examples showing that, in the case where the 
initial datum $\overline u$ is not monotone, the flux~$\widetilde\gamma$ is no more optimal. 
This leads us to formulate the following:
\begin{itemize}
[leftmargin=14pt]
    \item[] 
    {\it Conjecture: if the flux $\widetilde \gamma$ of the  entropy weak solution of~\eqref{eq:CP-classic} is not monotone on $(0,T)$, then an optimal control 
    for~\eqref{maxmin-0}
    must be a piecewise constant flux of a non-entropic solution of~\eqref{eq:CP-classic}.
    Moreover, such a solution admits a shock discontinuity emerging tangentially from the junction $x=0$.
}
\end{itemize}

From the perspective of traffic model applications, our result is quite natural. 
Specifically, when the traffic on both incoming and outgoing roads exhibits homogeneous features 
(meaning it is either in acceleration or in  deceleration regime),   
no control is needed at the road intersection 
in order to enhance the overall performance. However, when the regime 
on the roads is heterogeneous, so presenting both deceleration and acceleration areas, 
then implementing a different condition at the road intersection can significantly improve car circulation.

The analysis of the optimization problems considered in this note relies 
on the method of generalized characteristics
developed by Dafermos~\cite{d,daf-book} for conservation laws with convex or concave fluxes, 
and applied to the present setting of
a 1-1 network.
Here we have restricted  the study of these optimization problems to the case 
where the dynamics along the incoming and outgoing edges is described by the
same conservation law. 
As a natural next step, we plan to investigate the structure of  solutions for this type of optimal control 
problems when the flux of the incoming and outgoing edges is different
(in order to describe traffic flow dynamics on roads of varying amplitudes or surface conditions). In this different scenario 
we will exploit the analysis pursued in~\cite{at1,at2}
on the structure of solutions to conservation laws with discontinuous flux. We expect 
to derive conditions (on some classes of initial data) which ensure 
 the optimality of inflow controls associated to the so-called {\it interface connections} $(A,B)$.

Recent results on the structure of optimal solutions for another class of control 
problems on a 1-1 network
were obtained in~\cite{card1}
exploiting the equivalence between conservation laws and Hamilton Jacobi equations.

The note is organized as follows.
In Section~\ref{sec:prelim} we recall some preliminary results on initial-boundary value problems, 
present the general control setting, 
and state the main results of the note. 
In Section~\ref{sec:entrsolmax} we prove the optimality for~\eqref{max-J-0} of the flux of the entropy weak solution of~\eqref{eq:CP-classic}. In Section~\ref{sectionesempi} we propose two examples of non-monotone initial data $\overline u$
for which the entropy weak solution of~\eqref{eq:CP-classic} is not an optimal solution of~\eqref{maxmin-0}. 
In Section~\ref{entrsolvminmax} we establish the main results of the note on the optimality for~\eqref{maxmin-0} of 
the flux of the entropy weak solution of~\eqref{eq:CP-classic} when the initial datum is monotone. Finally, 
in the Appendix~\ref{appendix} we provide a proof of the $\mathbf{BV}$-regularity of the trace of the flux of 
the solution of an initial-boundary value problem.

\section{Preliminaries and main results}
\label{sec:prelim}
In traffic flow models, usually the traffic density takes values in a compact domain $\Omega = [0, u^{\max}]$,
where $u^{\max}$  denotes the 
maximum possible density (bump-to-bump) inside the road. 
The fundamental diagram $f: \Omega \to\R$ has the expression $f(u)=u v(u)$ where $v$
is the average speed of vehicles. 
In most models the velocity $v$ 
satisfies $v'(u)\leq \overline v<0$
for all $u$,
and $u\to u v'(u)$
is a non-increasing map.
Because of the finite propagation speed,
it is not restrictive to assume
that the domain of~$f$ is the whole real line.
Thus, throughout the note we make the 
general assumption
that the  flux function 
is a twice continuously differentiable
map $f:\R\to\R$, which is (uniformly) strictly concave, and that attains its global maximum at the point $\theta$, 
i.e. \begin{equation}\label{def-theta} f(\theta)=\max_{u\in\R} f(u).\end{equation}

We recall that a function $g: (a,b)\to \R$, $a,b\in \R\cup\{-\infty,+\infty\}$, of bounded variation ($\mathbf{BV}$) 
admits  left limits at any point~$t\in (a,b]$, 
and right limits at any point $t\in[a,b)$.
We will denote them with $g(t^-)$
and $g(t^+)$, respectively.
We  
define $\tv_{(a, b)}\{g\}$ as the 
{\it essential variation} of $g$ on the  interval $(a,b)$ 
which coincides with the {\it pointwise variation} on $(a,b)$
of the right continuous or left continuous representative of~$g$
(see~\cite[Section 3.2]{AFP}). 
More explicitly for the right continuous representative $g$ we write:    
\begin{equation}
  \label{tv}
  \tv_{(a,b)}\{g\}\doteq \sup_{a< t_0<t_1<\cdots<t_N<  b}
  \left\{\sum_{\ell=1}^N |g(t_\ell) - g(t_{\ell-1})|\right\}.
\end{equation}
We will sometimes also use the notation, for $(c,d)\subset (a,b)$, 
\begin{equation}
  \label{tvchiusa}
  \tv_{[c,d]}\{g\}\doteq  \tv_{(c,d)}(g)+ |g(c^+)-g(c^-)|+|g(d^+)-g(d^-)|.
\end{equation}


\subsection{Admissible junction controls}
\label{sec:bvp}
Following \cite{accg}, we introduce in this section the set of admissible junction inflow controls
associated to pairs of solutions of the initial boundary value problems~\eqref{bdry-incoming}, \eqref{bdry-outgoing}.
We recall  (see \cite[Remark~2.3]{accg}) that  an entropy weak solution $u_1(t,\,\cdot)$
of~\eqref{bdry-incoming} ($u_2(t,\,\cdot)$ of~\eqref{bdry-outgoing}) satisfies a-priori
BV-bounds (due to Ole\v{\i}nik-type inequalities) and  admits the one-sided limit $u_1(t,0^-)$ ($u_2(t,0^+)$) 
at any fixed time $t>0$.
As observed in the introduction, 
throughtout the paper we will adopt the notation $u_1(t,0)\doteq u_1(t,0^-)$, $u_2(t,0)\doteq u_2(t,0^+)$.
Moreover, we will use the notations
$I_1\doteq (-\infty, 0)$, $I_2\doteq (0, +\infty)$, $J_1\doteq [\theta,+\infty)$, $J_2\doteq (-\infty,\theta]$.
The Dirichlet boundary conditions at $x=0$ must be
interpreted in the relaxed sense 
of Bardos, Le Roux, N\'ed\'elec~\cite{BLN}.
Therefore we shall adopt the following
definition of entropy admissible weak solution of the mixed initial-boundary
value problem~\eqref{bdry-incoming}, \eqref{bdry-outgoing} (cfr.~\cite{lf})
\begin{definition}
  \label{def-ent-sol-ibvp}
  Given $\overline u_i\in  \mathbf{L^\infty} (I_i;  \R)$ and
  $k_i\in \mathbf{L^\infty} \big((0,T);  J_i\big)$, $T>0$,
  we say that a function
  $u_i\in\mathbf{C}\big([0,T];\, \mathbf{L}^1_{{\rm loc}}
  (I_i;  \R)\big)$
  is an entropy admissible weak solution
  of~\eqref{bdry-incoming} (resp. of~\eqref{bdry-outgoing}) if :
  \begin{enumerate}[label=(\roman*) , align=left]
  \item For every $t > 0$,
    $u_i(t) \in \mathbf{BV_{\rm loc}}\left(I_i; \R\right)$ and admits the one-sided limit at $x=0$.

  \item $u_i$ is a weak entropic solution
    of~\eqref{incoming} (resp. of~\eqref{outgoing}) with initial data $\overline u_i$, i.e. for all 
    $\varphi\in\CCc{1}\big([0,T)\times I_i;  \R\big)$ there holds
    \begin{equation*}
      \int_0^T\!\!\int_{I_i}\Big[u_i\,  \pt\varphi + f(u_i)\, 
      \px\varphi \Big](t,x)~dxdt + \int_{I_i}\overline u_i (x)
      \varphi(0,x)dx\, = 0\,,
    \end{equation*}
    and the Lax entropy condition  is satisfied 
    \begin{equation*}
      u_i(t,x^-)\leq u_i(t,x^+),\qquad\quad
      t>0, \ x \in I_i.
    \end{equation*}
  \item\label{def:solution:bc}
    The boundary condition in~\eqref{bdry-incoming} is verified if,
    for a.e. $t>0$, there holds:
%
\begin{equation}
\label{Dir-bc-in}
  \begin{array}{c@{\qquad}c}
    \textrm{either} &  u_1(t,0)=k_1(t),
    \vspace{.1cm}
    \\
    \noalign{\smallskip}
    \textrm{or}  & \quad f'(u_1(t,0))> 0  \   \
    \textrm{ and } \ f(u_1(t,0))\leq 
    f\big(k_1(t)\big);
  \end{array}
\end{equation} 
the boundary condition in~\eqref{bdry-outgoing} is verified if,
for a.e. $t>0$, there holds:
\begin{equation*}
  \begin{array}{c@{\qquad}c}
    \textrm{either} &  u_2(t,0)=k_2(t),
    \vspace{.1cm}
    \\
    \noalign{\smallskip}
    \textrm{or}  &  \quad f'(u_2(t,0))< 0  \   \
    \textrm{ and } \ f(u_2(t,0))\leq 
    f\big(k_2(t)\big).
  \end{array}
\end{equation*} 
  \end{enumerate}
\end{definition}
 
\begin{remark}\upshape
  In Definition~\ref{def-ent-sol-ibvp}
  we are considering only boundary data that have characteristics entering the domain, 
  that is  $k_1 \in \mathbf{L^\infty}\left((0,T); [\theta,+\infty) \right)$ 
  and $k_2 \in \mathbf{L^\infty}\left((0,T); (-\infty, \theta] \right)$.
  As observed in \cite{lf}, this is not a restrictive assumption. 
  Indeed, if one considers more general boundary data $k_i \in \mathbf{L^\infty}\left((0,T);\R\right)$,
  one can recover the same
  entropy admissible weak solutions $u_i(t,x)$, $i=1,2$, 
  of \eqref{bdry-incoming}, \eqref{bdry-outgoing},
  replacing the boundary data $k_i$ with the normalized boundary data
  $\widetilde k_i$  defined by
  \begin{equation}
    \label{normal-bdry-data25}
    \widetilde k_i(t)\doteq 
    \begin{cases}
      f_+^{-1}\big(f(u_1(t,0))\big), & \textrm{if} \quad\ i=1,
      \\
      \noalign{\medskip}
      f_-^{-1}\big(f(u_2(t,0)\big), & \textrm{if} \quad\ i=2,
    \end{cases}
  \end{equation}
  where we denote as $f_-, f_+$ the restrictions of $f$ to the intervals $(-\infty, \theta]$
  and $[\theta, +\infty]$, respectively.
\end{remark}
Now we are in the position to formulate the definition of admissible inflow control.
\begin{definition}\label{defiammissibile} 
  Given $T>0$, $\overline u\in 
  \mathbf{L^\infty}(\R;\R)$,
  as in~\eqref{indatum}, with
  $\overline u_i\in  \mathbf{L^\infty}(I_i; \R)$, $i=1,2$,
  we say that $\gamma \in \mathbf{L^\infty}((0,T);\R)$
  is an admissible junction inflow
  control if there exist boundary data\linebreak
  $k_1 \in \mathbf{L^\infty}\left((0,T); [\theta,+\infty) \right)$, $k_2 \in \mathbf{L^\infty}\left((0,T); 
  (-\infty,\theta] \right)$ 
  such that, for a.e. $t\in [0,T]$,
  there holds
  \begin{equation}
    \label{gammat} 
    \gamma(t)=f(u_1(t,0)) = f(u_2(t, 0)),
  \end{equation}
  where $u_1$ is the entropy weak solution to~\eqref{bdry-incoming} with initial 
  datum $\overline u_1$ and boundary datum $k_1$,  while $u_2$ is the entropy weak solution to~\eqref{bdry-outgoing} 
  with initial datum $\overline u_2$ and boundary datum $k_2$. 

  Let us denote  
  \begin{equation}
    \label{ugamma-2} 
    u(t,x; \gamma)= \begin{cases}u_1(t,x),
    & x<0 \\ u_2(t,x),
    & x>0 \end{cases}
  \end{equation} 
  so that   
  $\gamma (t)= f(u(t, 0; \gamma)) $ for a.e. $t$.
\end{definition}

\begin{remark}
\label{rem:entr-sol}
Observe that, if $\widetilde u : (0,T)\times\R\to \R$ denotes the 
entropy admissible weak solution
to~\eqref{eq:CP-classic}, with initial datum
$\overline u\in 
  \mathbf{L^\infty}(\R;\R)$
  as in~\eqref{indatum}, 
  $\overline u_i\in  \mathbf{L^\infty}(I_i; \R)$ with $i=1, 2$,
then the flux $f\circ \widetilde u(\cdot\,;0)$ is an admissible junction inflow control.
In fact, one can immediately check that
letting $\widetilde u_1$, $\widetilde u_2$ denote the restriction of $\widetilde u$ to $\{x<0\}$, $\{x>0\}$,
respectively, then we have:
\begin{itemize}
[leftmargin=14pt]
    \item[-]
    $\widetilde u_1$ is the entropy weak solution to~\eqref{bdry-incoming} with initial datum $\overline u_1$ and boundary datum
    \begin{equation*}
        k_1(t)\doteq f_+^{-1}\big(f(\widetilde u_1(t,0))\big),
        \quad t\in (0,T);
    \end{equation*}
    \item[-]
    $\widetilde u_2$ is the entropy weak solution to~\eqref{bdry-outgoing} with initial datum $\overline u_2$ and boundary datum
    \begin{equation*}
        k_2(t)\doteq f_-^{-1}\big(f(\widetilde u_2(t,0))\big),
        \quad t\in (0,T);
    \end{equation*}
    \item[-] for a.e. $t \in (0, T)$,
    \begin{equation}
    f(\widetilde u_1(t,0))=
   f(\widetilde u_2(t, 0)). 
  \end{equation}
    \end{itemize}
Then, it follows that the map $\widetilde \gamma(t)\doteq f(\widetilde u_1(t,0))$, $t\in(0,T)$, 
satisfies the conditions of Definition~\ref{defiammissibile}.
Therefore, we have 
\begin{equation*}
    \widetilde u(t,x) = u(t,x;\widetilde\gamma\,)
    \qquad\forall~(t,x)\in (0,T)\times\R\,,
\end{equation*}
with $u(t,x;\widetilde\gamma\,)$ defined as in~\eqref{ugamma-2} replacing $u_i$ with $\widetilde u_i$.
\end{remark}

\subsection{The control problem}

For every fixed $T>0$, given $\overline u\in 
  \mathbf{L^\infty}(\R;\R)$,
we introduce the set of admissible controls according to Definition~\ref{defiammissibile}:
\begin{equation}\label{controlli}
 \mathcal{U}\doteq  \mathcal{U}(\overline u)\doteq   \Big\{ \gamma\in  \mathbf{L^\infty}
 \big([0,T];\, \R
 \big)
\ \big| \    \gamma\ \textrm{is an   admissible junction inflow  control}     
    \Big\}, \end{equation} 
and we  consider the  maximization problem
\begin{equation}
  \label{max-J}
  \sup_{\gamma \in {\mathcal{U}}} \int_0^T 
  f(u_1(t, 0;\gamma))\, dt 
  = \sup_{\gamma \in {\mathcal{U}}} \int_0^T   \gamma(t) \,  dt.
\end{equation}
Our first result is the following:
\begin{theorem} 
  \label{maxtheo1} 
  Given $T>0$ and $\overline u\in \mathbf{L^\infty}(\R;\R)$, 
  let $\widetilde u$ be the entropy admissible weak solution
  to the Cauchy problem~\eqref{eq:CP-classic}. Then the composite function  
  $\widetilde\gamma\doteq f\circ \widetilde u (\cdot\,0)$ solves the maximization problem \eqref{max-J},
  i.e. there holds   
  \begin{equation*}
    \int_0^T f(\widetilde u(t,0)) \, dt =
    \max_{\gamma \in {\mathcal{U}}} \int_0^T \gamma(t) \, dt.
  \end{equation*}
\end{theorem}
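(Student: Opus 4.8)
The plan is to establish the stronger pointwise inequality $\gamma(t)\le\widetilde\gamma(t)$ for a.e. $t\in(0,T)$ and every admissible control $\gamma\in\mathcal U$. Since $\widetilde\gamma\in\mathcal U$ by Remark~\ref{rem:entr-sol}, integrating this bound over $(0,T)$ yields simultaneously the upper bound $\int_0^T\gamma\,dt\le\int_0^T\widetilde\gamma\,dt$ and the fact that the maximum is attained at $\widetilde\gamma$, which is exactly the assertion. The guiding idea, natural from the traffic interpretation, is that the flux through the node of \emph{any} admissible solution is throttled by two intrinsic quantities attached to the data on the two edges --- the \emph{demand} of the incoming road and the \emph{supply} of the outgoing road --- while the entropy solution of the whole-line problem~\eqref{eq:CP-classic} realizes precisely the minimum of the two, i.e.\ the largest flux compatible with conservation and the entropy condition.

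To make this precise I would first introduce these two quantities. Let $w_1$ be the entropy admissible solution of the incoming problem~\eqref{bdry-incoming} with datum $\overline u_1$ and boundary datum $k_1\equiv\theta$, and let $w_2$ be the entropy admissible solution of the outgoing problem~\eqref{bdry-outgoing} with datum $\overline u_2$ and boundary datum $k_2\equiv\theta$; these describe maximal outflow, respectively maximal inflow, at the junction. Set $D(t)\doteq f(w_1(t,0^-))$ and $S(t)\doteq f(w_2(t,0^+))$, and note that by Definition~\ref{def-ent-sol-ibvp} the traces satisfy $w_1(t,0^-)\le\theta$ and $w_2(t,0^+)\ge\theta$. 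Given any $\gamma\in\mathcal U$ with associated traces, I would then prove $f(u_1(t,0^-))\le D(t)$ and $f(u_2(t,0^+))\le S(t)$ for a.e.\ $t$. Since every admissible boundary datum lies in $J_1=[\theta,+\infty)$ (resp.\ $J_2=(-\infty,\theta]$), the comparison (monotonicity in the boundary datum) principle for the IBVP gives $u_1\ge w_1$ on $I_1$ (resp.\ $u_2\le w_2$ on $I_2$). A generalized-characteristics analysis of the boundary trace then upgrades this to the flux inequality: the trace $u_1(t,0^-)$ either equals the free-flow value carried by the incoming characteristic, in which case it coincides with $w_1(t,0^-)$ and the fluxes agree, or it is a congested value reached through an entropy shock from the interior, whose Rankine--Hugoniot/entropy relation forces $f(u_1(t,0^-))\le D(t)$. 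The outgoing edge is symmetric. Hence $\gamma(t)=f(u_1(t,0^-))=f(u_2(t,0^+))\le\min\{D(t),S(t)\}$.

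It then remains to identify $\widetilde\gamma(t)=\min\{D(t),S(t)\}$. Here I would use that the traces of the whole-line solution satisfy $\widetilde u(t,0^-)\le\widetilde u(t,0^+)$ with $f(\widetilde u(t,0^-))=f(\widetilde u(t,0^+))=\widetilde\gamma(t)$, and split according to the sign of $f'$ at the node. If $\widetilde u(t,0^-)\le\theta$, the minimal backward characteristic from $(t,0)$ enters from $I_1$, the incoming edge is in free outflow, so $\widetilde\gamma(t)=D(t)$ and admissibility on the outgoing side forces $D(t)\le S(t)$; if $\widetilde u(t,0^+)\ge\theta$, the relevant characteristic enters from $I_2$, the outgoing edge is congestion-limited, so $\widetilde\gamma(t)=S(t)\le D(t)$; the borderline $\widetilde u(t,0^-)=\widetilde u(t,0^+)=\theta$ gives $\widetilde\gamma(t)=f(\theta)=\min\{D(t),S(t)\}$. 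In every case $\widetilde\gamma(t)=\min\{D(t),S(t)\}$, whence $\gamma(t)\le\widetilde\gamma(t)$, and integrating over $(0,T)$ together with $\widetilde\gamma\in\mathcal U$ completes the proof.

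The main obstacle is the middle step: converting the heuristic ``a control can only throttle the junction flux below demand and supply'' into the rigorous bounds $f(u_1(t,0^-))\le D(t)$ and $f(u_2(t,0^+))\le S(t)$, together with the identification $\widetilde\gamma=\min\{D,S\}$. Both rely on the machinery of generalized characteristics combined with the comparison/monotonicity properties of the IBVP and the precise boundary-trace characterization of Definition~\ref{def-ent-sol-ibvp}. The delicate point is that the trace inequalities $u_1\ge w_1$ and $u_2\le w_2$ do \emph{not} by themselves give the flux inequalities --- on the relevant monotonicity branch of $f$ they would point the wrong way --- so one must argue that whenever the trace of the controlled solution differs from that of the free solution it has crossed to the opposite branch of $f$ through an entropy shock, where the flux strictly decreases; ruling out intermediate boundary traces is exactly where care is required.
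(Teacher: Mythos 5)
Your strategy rests on the pointwise inequality $\gamma(t)\le\widetilde\gamma(t)$ for a.e.\ $t$ and every $\gamma\in\mathcal U$, and this intermediate claim is false; the theorem only holds at the level of time integrals. The paper's own Example~\ref{shrar} is a direct counterexample: there the admissible control $\gamma$ built from the constant boundary data $b,c$ satisfies $\gamma(t)=f(b)\le f(a_3)<\widetilde\gamma(t)$ on an initial interval, while $\int_0^T\gamma\,dt=\int_0^T\widetilde\gamma\,dt$ (both are maximizers), so necessarily $\gamma(t)>\widetilde\gamma(t)$ on a set of positive measure — concretely, on the interval between the time $-\overline x_1/\lambda$ when the entropic flux drops to $f(a_1)$ and the later time $\tau$ when the delayed shock reaches the node, one has $\gamma=f(b)>f(a_1)=\widetilde\gamma$. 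The underlying error is in your definition of the demand $D(t)$ and supply $S(t)$ from the \emph{uncontrolled} solutions $w_1,w_2$ (boundary datum $\equiv\theta$): the amount of flux available at the junction at time $t$ depends on the whole history of the control, not only on the initial data. Throttling the incoming road early stores mass near the junction which can be discharged later at a flux exceeding $D(t)=f(w_1(t,0^-))$; this is exactly the mechanism exploited in Examples~\ref{shrar} and~\ref{ex:Fabio}. Hence the bound $f(u_1(t,0^-))\le D(t)$ you propose to prove by comparison plus generalized characteristics cannot be established — it fails.

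The paper avoids any pointwise comparison and argues directly on the integral by conservation of mass. Applying the Divergence Theorem on $[0,T]\times(-LT,0)$ (with $L$ dominating the characteristic speeds) reduces the claim to comparing the terminal profiles: $\int_0^T f(u_1(t,0))\,dt-\int_0^T f(\widetilde u(t,0))\,dt=\int_{-LT}^0\bigl(\widetilde u(T,x)-u_1(T,x)\bigr)\,dx$. Backward characteristics are then used only to show that either $u_1(T,\cdot)=\widetilde u(T,\cdot)$ on $(-LT,0)$ (when some backward characteristic of $u_1$ from $(T,0^-)$ reaches $t=0$ in $\{x<0\}$), or else $u_1(T,x)>\theta\ge$ the comparable value and hence $u_1(T,x)\ge\widetilde u(T,x)$ on the remaining interval $(\bar x,0)$, which makes the right-hand side nonpositive. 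If you want to salvage your write-up, this integrated comparison of $u_1(T,\cdot)$ with $\widetilde u(T,\cdot)$ is the step that must replace the demand--supply pointwise bound.
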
 
The proof of   this result is given in 
Section~\ref{sec:entrsolmax}.
One can easily verify that the optimization problem~\eqref{max-J} admits other maximizers different from $\widetilde\gamma$, 
which are
junction inflow controls that present
rarefaction waves or shocks generated at the node $x=0$ at positive times (see Section~\ref{sectionesempi}). 
Since an usual objective in traffic management 
is to keep the traffic flow as fluent as possible, 
it is then natural 
to investigate the existence of maximizers
of~\eqref{max-J} that minimise a cost functional
measuring the total variation of the corresponding solution at the node $x=0$
(e.g. see~\cite{accg,cgr}).

Therefore for every fixed $T>0$, given $\overline u\in 
  \mathbf{L}^\infty(\R;\R)$,
we restrict   the set  of maximizers of \eqref{max-J} to the  set  of inflow controls: 
\begin{equation}
    \label{eq:admiscontolset-max}
    \mathcal{U}_{max} 
  \doteq \mathcal{U}_{max}(\overline u)
  \doteq \left\{ \bar \gamma\in \mathcal{U}\cap 
  \mathbf{BV}   
 \big([0,T];\, \R \big)\ \bigg|\ \int_0^T f(u_1(t, 0;\bar\gamma)) dt
  = \max_{\gamma \in {\mathcal{U}}} \int_0^T \ f(u_1(t, 0;\gamma)) dt\right\}
\end{equation}
which are admissible for the functional we are going to minimise. For this reason we will assume  $\overline u\in 
  \mathbf{BV}(\R;\R)$, so to guarantee that this set is not empty, see the following Remark. 



%

\begin{remark}\upshape 
\label{rem:entr-sol-bv}
If $\overline u\in 
  \mathbf{BV}(\R; \R)$,
  and $\widetilde u$ is the 
entropy admissible weak solution
to~(\ref{eq:CP-classic}),
  then the associated  boundary inflow control
  $\widetilde\gamma\doteq f\circ \widetilde u (\cdot, 0)$ is a function of bounded variation on $(0,T)$ 
  as a consequence of a recent result obtained   in \cite[Theorem 1.1]{marconi1}. 
  More precisely the result   is the following: for every $T>0$ and for every $x\in\R$,     
  the map $t\to f(\widetilde u(t , x))$ is in 
 $ \mathbf{BV}([0,T]; \R)$, with 
 \begin{equation}\label{stimabv} 
  \tv_{(0, T)} f(\widetilde u(\cdot\,, x))\leq C(\tv(\overline u), \|f'\circ\overline u\|_{\LL{\infty}}),
 \end{equation}
 where $C(\tv(\overline u), \|f'\circ\overline u\|_{\LL\infty})$ denotes an explicit constant only depending on $\tv(\overline u) $ and on
the Lipschitz constant of $\|f'\circ\overline u\|_{\LL\infty}$.

Thus, by Remark~\ref{rem:entr-sol}, we have
  $\widetilde\gamma\in \mathcal{U}_{max}$ 
and in particular  the set $\mathcal{U}_{max}$ is not empty.
\end{remark}

In  Appendix~\ref{appendix} we complement the  result~\cite[Theorem 1.1]{marconi1} recalled in  Remark \ref{rem:entr-sol-bv} with  a regularity result
for solutions of initial-boundary value problems. 
More precisely in Theorem \ref{bvfluxbdrytrace} 
we show that if  $\overline u_1\in \mathbf{BV} (\R; \R)$,  
$k_1 \in \mathbf{L^\infty}\left((0,T); [\theta,+\infty) \right)$  with 
$f(k_1)\in \mathbf{BV}((0,T); \R)$
then the flow map 
$t\mapsto   f\big(u_1(\cdot, 0)\big)$ of the  entropy admissible weak solution of \eqref{bdry-incoming}
is in $\mathbf{BV}((0,T); \R)$.

This implies that in order to obtain  admissible  boundary inflow control $\gamma\in \mathcal{U}$ 
that has bounded total variation it is sufficient to choose boundary data $k_1, k_2$ 
in~\eqref{bdry-incoming}, \eqref{bdry-outgoing} with $f(k_i)\in \mathbf{BV}((0,T); \R)$.

\begin{remark}\upshape
  Notice that to construct  admissible inflow controls it is not necessary that the boundary data  $k_i$ 
  are in  $\mathbf{BV}((0,T); \R)$, but only that $f(k_i)$ are in $\mathbf{BV}((0,T); \R)$. 
  The requirement $f(k_i)\in\mathbf{BV}((0, T); \R)$ is less restrictive: 
  indeed in general the entropy solution $\widetilde u$ satisfies $f(\widetilde u(\cdot,0))\in \mathbf{BV}((0, T); \R)$,
  but it does not satisfy $\widetilde u(\cdot,0)\in \mathbf{BV}((0, T); \R)$ as shown by an explicit counterexample 
  in \cite{marconi1}.
\end{remark}

Observe that, if $\gamma \in \mathbf{BV}   \big([0,T];\, \R \big)$ is an admissible junction inflow control,
  according to  \Cref{defiammissibile} we get that 
\[\tv_{(0, T)}(f\big(u_1(\cdot\,, 0; \gamma)\big))=\tv_{(0, T)}(f(u_2(\cdot\,, 0;\gamma))),   \] 
 where the total variation is defined in \eqref{tv}. 
We then introduce the following functional: for 
any    $\gamma\in \mathcal{U}_{max}$, let 
\begin{align}
  \label{tv-def}
  \mathcal F_{[0, T]}(\gamma)
  & \doteq \tv_{(0, T)}(f(u_1(\cdot, 0;\gamma)))  
  \\ 
  \nonumber &\quad +|f(\overline u(0^-))- \gamma(0^+)|+|f(u_1(T,0; \gamma))
  - \gamma(T^-)|
  \\
  \nonumber &\quad+|f(\overline u(0^+))- \gamma(0^+)|+|f(u_2(T,0; \gamma))- \gamma(T^-)|.
\end{align}
This definition of the  functional $\mathcal{F}$ aims to take into account  also variations of  
the flux at the node~$x=0$ which occur at the beginning and at the end of the interval of time $[0,T]$.

We study the  following minimization problem
\begin{equation}\label{maxmin}
  \inf_{\gamma \in {\mathcal{U}}_{max}} \mathcal F_{[0, T]}(\gamma).
\end{equation}  
Due to Theorem \ref{maxtheo1} and Remark 
 \ref{rem:entr-sol-bv}
we get that \[
  \inf_{\gamma \in {\mathcal{U}}_{max}} \mathcal F_{[0, T]}(\gamma)=
  \inf_{\gamma \in {\mathcal{U}}_{max}^M} \mathcal F_{[0, T]}(\gamma),
\] where \[
 \mathcal{U}^M_{max} 
  = \Big\{  \gamma\in \mathcal{U}_{max}\ \big|\,  \mathcal{F}_{[0,T]} (\gamma)\leq M\doteq\mathcal{F}_{[0,T]} 
  f(\widetilde u(\cdot, 0))\Big\}.
\]
Note that this set is compact  in $\mathbf{L}^1_{loc}$, as proved in \cite{accg}, 
so  we conclude the following existence result for \eqref{maxmin}.
\begin{theorem} 
  Given $T>0$, and $\overline u\in 
  \mathbf{BV}(\R;\R)$, there exists a solution to \eqref{maxmin}. 
\end{theorem}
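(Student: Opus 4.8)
The plan is to argue by the direct method of the calculus of variations, taking advantage of the reduction already recorded in the excerpt. Since
\[
\inf_{\gamma \in \mathcal{U}_{max}} \mathcal F_{[0,T]}(\gamma) = \inf_{\gamma \in \mathcal{U}_{max}^M} \mathcal F_{[0,T]}(\gamma),
\]
and the sublevel set $\mathcal{U}_{max}^M$ is compact in $\mathbf{L}^1_{loc}$ by the result of~\cite{accg}, it suffices to show that $\gamma \mapsto \mathcal F_{[0,T]}(\gamma)$ is lower semicontinuous with respect to $\mathbf{L}^1_{loc}$ convergence on $\mathcal{U}_{max}^M$: a lower semicontinuous functional on a nonempty compact set attains its infimum. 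Nonemptiness is provided by Remark~\ref{rem:entr-sol-bv}, since $\widetilde\gamma \in \mathcal{U}_{max}$ and $\mathcal F_{[0,T]}(\widetilde\gamma) = M$, so that $\widetilde\gamma \in \mathcal{U}_{max}^M$.

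First I would fix a minimizing sequence $(\gamma_n)_n \subset \mathcal{U}_{max}^M$, so that $\mathcal F_{[0,T]}(\gamma_n)\le M$ and $\int_0^T \gamma_n\,dt = \max_{\gamma\in\mathcal U}\int_0^T\gamma$. By the cited compactness, after passing to a subsequence (not relabeled) I may assume $\gamma_n \to \gamma^*$ in $\mathbf{L}^1_{loc}$ with $\gamma^* \in \mathcal{U}_{max}^M \subset \mathcal{U}_{max}$; in particular the limit is again an admissible inflow control and a maximizer of~\eqref{max-J}, hence a legitimate competitor. Moreover, every $\gamma_n$ satisfies $\tv_{(0,T)}(\gamma_n)\le M$ and the controls are uniformly bounded in $\mathbf{L^\infty}$ (from above by $\gamma_n \le f(\theta)$, and from below by anchoring $\gamma_n(0^+)$ to the fixed value $f(\overline u(0^-))$ through the bound $\mathcal F_{[0,T]}(\gamma_n)\le M$), so Helly's selection theorem lets me further assume pointwise convergence $\gamma_n(t)\to\gamma^*(t)$ at every continuity point of the limit. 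This pointwise information is precisely what is needed to treat the boundary terms.

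The lower semicontinuity of the interior term $\tv_{(0,T)}\big(f(u_1(\cdot,0;\gamma))\big)=\tv_{(0,T)}(\gamma)$ is the classical lower semicontinuity of the essential variation under $\mathbf{L}^1_{loc}$ convergence. The genuine difficulty, which I expect to be the main obstacle, lies in the four boundary terms of~\eqref{tv-def} involving the one-sided traces $\gamma(0^+)$, $\gamma(T^-)$ and the terminal flux traces $f(u_i(T,0;\gamma))$, since $\mathbf{L}^1_{loc}$ convergence alone does not control these endpoint values. I would handle them by recasting $\mathcal F_{[0,T]}$ as a total variation over a closed interval in the spirit of~\eqref{tvchiusa}: extending each trace $f(u_i(\cdot,0;\gamma))$ slightly beyond $[0,T]$ by the constant initial value ($f(\overline u(0^-))$ for $i=1$, $f(\overline u(0^+))$ for $i=2$) and by the constant terminal value $f(u_i(T,0;\gamma))$, the initial and terminal jumps in~\eqref{tv-def} become interior jumps absorbed into $\tv_{[0,T]}$, so that lower semicontinuity again reduces to that of the essential variation. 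To make this rigorous I must verify that the one-sided limits pass correctly to the limit, i.e.\ that $\gamma_n(0^+)\to\gamma^*(0^+)$ and that the terminal traces $f(u_i(T,0;\gamma_n))$ converge; this follows from the uniform $\mathbf{BV}$ bound together with the time-continuity and stability of the boundary flux trace of the solution operator (Theorem~\ref{bvfluxbdrytrace}). Once the inequality $\mathcal F_{[0,T]}(\gamma^*)\le \liminf_n \mathcal F_{[0,T]}(\gamma_n)$ is secured, the chain $\mathcal F_{[0,T]}(\gamma^*)\le \liminf_n \mathcal F_{[0,T]}(\gamma_n) = \inf_{\gamma\in\mathcal{U}_{max}}\mathcal F_{[0,T]}(\gamma)$ exhibits $\gamma^*$ as a solution of~\eqref{maxmin}.
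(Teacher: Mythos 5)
Your proposal takes essentially the same route as the paper: the paper's entire proof consists of observing that the minimization can be restricted to the sublevel set $\mathcal{U}_{max}^M$, which is nonempty (it contains $\widetilde\gamma$ by Remark~\ref{rem:entr-sol-bv}) and compact in $\mathbf{L}^1_{loc}$ by \cite{accg}, whence existence follows; the lower-semicontinuity details you supply are left implicit there and deferred to \cite{accg}, where the result is proved for general networks. One caveat on your added detail: the convergence of the terminal traces $f(u_i(T,0;\gamma_n))$ does \emph{not} follow from Theorem~\ref{bvfluxbdrytrace}, which is an a priori total variation estimate for a single fixed solution and contains no stability statement for the trace under perturbation of the control. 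To pass these endpoint values to the limit you would instead invoke the $\mathbf{L}^1_{loc}$ compactness of the associated solutions $u(\cdot,\cdot;\gamma_n)$ themselves (which is what \cite{accg} actually establishes) together with the uniform Ole\v{\i}nik-type one-sided bounds that make $x\mapsto u_i(T,x;\gamma_n)$ uniformly of bounded variation near $x=0$, so that the traces at $x=0^\pm$ at time $T$ converge along a subsequence to those of the limit solution.
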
 

In \cite{accg} existence of solutions to \eqref{max-J} and \eqref{maxmin} have been proved in much more generality, 
for networks with $n$ incoming roads and $m$ outgoing roads.

\subsection{Main result} 
The  main result of the note shows that    the  entropy admissible weak solution  solves the minimization problem \eqref{maxmin} when the initial datum $\overline u$ is monotone. This provides an optimal control characterization of the entropic solution for monotone initial data. The precise statement of our result is the following.
\begin{theorem}
Let $\widetilde u$ be the unique entropy  admissible weak solution
to~\eqref{eq:CP-classic}. 
If the initial datum $\overline u$  is monotone, then
\[  \min_{\gamma \in {\mathcal{U}}^M_{max}} \mathcal F_{[0, T]}(\gamma)= \mathcal F_{[0,T]} (\widetilde \gamma),   \] where $\widetilde \gamma= f(\widetilde u(\cdot, 0))$. 
\end{theorem}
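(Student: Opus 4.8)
The plan is to reduce the statement to a one-sided inequality. Since $\overline u\in\mathbf{BV}(\R;\R)$, Remark~\ref{rem:entr-sol-bv} guarantees $\widetilde\gamma=f\circ\widetilde u(\cdot,0)\in\mathcal U_{max}$, hence $\widetilde\gamma\in\mathcal U^M_{max}$; it therefore suffices to show that
\[
\mathcal F_{[0,T]}(\gamma)\ \ge\ \mathcal F_{[0,T]}(\widetilde\gamma)\qquad\text{for every }\gamma\in\mathcal U^M_{max}.
\]
The first step is to describe $\widetilde\gamma$ precisely. Using Dafermos' generalized backward characteristics for the strictly concave flux $f$, I would track the trace $t\mapsto\widetilde u(t,0)$: for a monotone datum $\overline u$ the minimal and maximal backward characteristics emanating from the nodal points $(t,0)$ meet the initial profile at points that move monotonically with $t$, and this — together with the sign of $f'$ across the sonic value $\theta$ and the Riemann solver at the node — forces $\widetilde\gamma$ to be monotone on $(0,T)$. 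Its initial value $\widetilde\gamma(0^+)$ is the flux selected by the Riemann problem with data $\big(\overline u(0^-),\overline u(0^+)\big)$, and the two terminal penalties in~\eqref{tv-def} vanish by the left-continuity of the flux trace at $t=T$. Consequently $\tv_{(0,T)}\widetilde\gamma=|\widetilde\gamma(T^-)-\widetilde\gamma(0^+)|$ and $\mathcal F_{[0,T]}(\widetilde\gamma)$ can be written explicitly in terms of the three numbers $f(\overline u(0^-))$, $f(\overline u(0^+))$ and $\widetilde\gamma(T^-)$.

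Second, I would extract the constraints that any competitor $\gamma\in\mathcal U^M_{max}$ must satisfy. Two facts are needed. (a) A pointwise demand/supply bound at the initial time, $\gamma(0^+)\le\widetilde\gamma(0^+)$: the largest flux the incoming arc can deliver and the largest the outgoing arc can receive at $t=0^+$ depend only on $\overline u(0^-)$ and $\overline u(0^+)$, and $\widetilde\gamma(0^+)$ already realizes their minimum. (b) A rigidity property of admissible flux traces coming from generalized characteristics on each arc: because $\gamma$ is the boundary flux of the entropy solutions of the two initial–boundary value problems, the instants at which the shocks and characteristics carried by $\overline u_1$ (resp.\ by $\overline u_2$) reach the node are essentially prescribed, so a competitor that depresses the flux on some interval — thereby storing a congestion wave — can raise it again only by releasing that wave, which must later return to the node and force the flux back down. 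I would phrase (b) as a one-sided variation inequality guaranteeing that, in the relevant monotonicity regime, $\gamma$ is forced to attain near one endpoint a value at least as extreme as the corresponding value $\widetilde\gamma(T^-)$ or $\widetilde\gamma(0^+)$ of the entropy trace.

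Third, I would assemble the bound. Combining (a) with the elementary estimate $\tv_{(0,T)}\gamma\ge|\gamma(s)-\gamma(s')|$ evaluated between the constrained endpoint values furnished by (b), the total variation that $\gamma$ must accumulate, augmented by the two initial penalties $|f(\overline u(0^\pm))-\gamma(0^+)|$, is at least the value computed for $\widetilde\gamma$ in the first step. A short case distinction according to the sign of $\gamma(0^+)-f(\overline u(0^\pm))$ — using $\gamma(0^+)\le\widetilde\gamma(0^+)$ to absorb the penalties — then yields $\mathcal F_{[0,T]}(\gamma)\ge\mathcal F_{[0,T]}(\widetilde\gamma)$, which is the assertion. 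The two monotonicity directions are treated separately (this is the content of Theorems~\ref{mon} and~\ref{mon2}), and within each one distinguishes the demand- and supply-limited sub-cases according to the position of $\overline u(0^\pm)$ relative to $\theta$.

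The main obstacle is step (b). It is \emph{not} true that $\widetilde\gamma$ dominates competitors pointwise — a queue built at early times lets an admissible control exceed $\widetilde\gamma$ later — and even the cumulative comparison $\int_0^\tau\gamma\le\int_0^\tau\widetilde\gamma$ (which does hold, and already settles the regime in which $\widetilde\gamma$ is increasing, since there $\int_0^\cdot\widetilde\gamma$ is convex) is by itself too weak in the opposite regime, where a purely cumulative bound is compatible with a low-variation competitor. What rescues the estimate there is the finer dynamical rigidity: the congestion a competitor creates must be discharged through the node and thereby re-injects variation before time $T$. Converting this qualitative statement into the sharp one-sided endpoint bound, uniformly over all admissible controls and over the sub-cases dictated by the position of the data with respect to $\theta$, is the delicate technical core of the argument.
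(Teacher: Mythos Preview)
Your overall architecture---prove $\widetilde\gamma$ is monotone, compute $\mathcal F_{[0,T]}(\widetilde\gamma)$ explicitly, then bound competitors from below---matches the paper's. But the mechanism you propose for the lower bound misses the key idea, and one of your intermediate claims is wrong.

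First, the error: in the non-decreasing case the terminal penalties in~\eqref{tv-def} do \emph{not} vanish for $\widetilde\gamma$. At $t=T$ there can be a standing shock at $x=0$, so $f(\widetilde u(T,0^-))$ and $f(\widetilde u(T,0^+))$ may both be strictly smaller than $\widetilde\gamma(T^-)$ (see Lemma~\ref{lemmafdecrescente}, item~2). Your formula for $\mathcal F_{[0,T]}(\widetilde\gamma)$ in terms of $f(\overline u(0^\pm))$ and $\widetilde\gamma(T^-)$ alone is therefore incorrect; the terminal spatial traces $f(\widetilde u(T,0^\pm))$ enter essentially (Lemma~\ref{lemmashock}).

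Second, and more importantly, the lower bound on competitors is not obtained through the constraint $\gamma(0^+)\le\widetilde\gamma(0^+)$ and a ``rigidity'' principle on the temporal trace $\gamma$. The paper instead exploits the \emph{maximality} hypothesis $\gamma\in\mathcal U_{max}$ directly: a Divergence Theorem / mass-balance argument (Lemmas~\ref{lemmamon} and~\ref{lemmararefaction}) shows that any maximizer must satisfy $u(T,0^\pm;\gamma)=\widetilde u(T,0^\pm)$ exactly, on whichever side $\widetilde u(T,\cdot)$ stays on the appropriate side of $\theta$. This pins down the \emph{spatial} traces at the terminal time, not a value of $\gamma$ itself. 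Once $f(u(T,0^\pm;\gamma))=f(\widetilde u(T,0^\pm))$ is known, the trivial lower bound $\mathcal F_{[0,T]}(\gamma)\ge |f(\overline u(0^\mp))-f(u(T,0^\pm;\gamma))|$ (triangle inequality through the path $f(\overline u(0^\mp))\to\gamma(0^+)\to\gamma(T^-)\to f(u(T,0^\pm;\gamma))$) already equals the explicit value of $\mathcal F_{[0,T]}(\widetilde\gamma)$ from Lemma~\ref{lemmashock} or Lemma~\ref{monentro2}. There is no need to control $\gamma(0^+)$, nor to argue about congestion waves returning to the node; the heuristic you describe as the ``delicate technical core'' is bypassed entirely. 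Your step~(b) as stated is too weak and too vague to close the argument, whereas the correct replacement---that maximizers share the terminal profile of $\widetilde u$ on the relevant half-line---is both sharper and elementary.
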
 
This   Theorem follows directly from \Cref{mon} for monotone non-decreasing initial data and from \Cref{mon2} 
for monotone non-increasing initial data.

On the other hand, without the monotonicity assumption on the initial data, we cannot expect that the entropy 
solution minimizes the total variation functional $\mathcal F_{[0, T]}$. In particular in Section  \ref{sectionesempi}, 
we provide two prototypical examples of non-monotone initial data for which the minimum in \eqref{maxmin} 
is achieved by a   not entropic solution.

\section{The entropy  admissible weak solution  maximizes the flow} 
\label{sec:entrsolmax}
In this section we provide the proof of \Cref{maxtheo1}.  

We start with a lemma, which will be useful in the following. 

\begin{lemma}
  \label{uniq} \  \ 
  \begin{enumerate}
    \item Let $u_1, v_1$ be two entropy admissible  weak solutions to \eqref{bdry-incoming}, 
      with the same  initial datum $\overline u\in \mathbf{L^\infty}
      ((-\infty, 0]; \R)$ and 
      such that $u_1(T,x)=
      v_1(T,x)$ for almost every $x\leq 0$. 
      Then  $\int_0^T f(u_1(t,0))\,dt=\int_0^T f(v_1(t,0))\,dt$.

    \item Let  $u_2, v_2$
      be two entropy admissible  weak solutions to \eqref{bdry-outgoing}, with the same  initial datum $\overline u\in \mathbf{L^\infty}
      ([0,+\infty); \R)$ and 
      such that $u_2(T,x)=
      v_2(T,x)$ for almost every $x\geq 0$. 
      Then  $\int_0^T f(u_2(t,0))\,dt = \int_0^T f(v_2(t,0))\, dt$. 
  \end{enumerate} 
\end{lemma}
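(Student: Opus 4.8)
The plan is to prove both statements by a conservation-of-mass (flux-balance) argument combined with the finite speed of propagation of scalar conservation laws; I detail item (1), since item (2) is entirely symmetric.

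First I would fix $R>0$ and integrate the equation $\pt u_1 + \px f(u_1)=0$ over the rectangle $(-R,0)\times(0,T)$. Since $u_1\in\mathbf{C}([0,T];\mathbf{L}^1_{\rm loc})$ admits the one-sided trace at $x=0$ by part (i) of Definition~\ref{def-ent-sol-ibvp} and, being $\mathbf{BV}_{\rm loc}$, a trace along the interior line $x=-R$, the weak formulation yields the balance
\begin{equation*}
  \int_{-R}^0 \big[u_1(T,x)-\overline u(x)\big]\,dx + \int_0^T \big[f(u_1(t,0)) - f(u_1(t,-R))\big]\,dt = 0,
\end{equation*}
together with the analogous identity for $v_1$. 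Subtracting the two and using the hypothesis $u_1(T,\cdot)=v_1(T,\cdot)$ almost everywhere, the final-time integrals cancel and I am left with
\begin{equation*}
  \int_0^T \big[f(u_1(t,0)) - f(v_1(t,0))\big]\,dt = \int_0^T \big[f(u_1(t,-R)) - f(v_1(t,-R))\big]\,dt.
\end{equation*}

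The key remaining step is to show that the right-hand side vanishes once $R$ is large. Here I would invoke finite speed of propagation: set $\Lambda\doteq\sup\{|f'(s)| : |s|\le \|\overline u\|_{\LL{\infty}}\}$. Since $u_1$ and $v_1$ are entropy solutions of the same conservation law on $(-\infty,0)$ sharing the initial datum $\overline u$, and since any signal emanating from the boundary $x=0$ at a time $s\in(0,t)$ travels leftward with speed at most $\Lambda$, the boundary cannot influence the value at a point $(t,x)$ with $x<-\Lambda t$. Consequently, choosing any $R>\Lambda T$, for a.e.\ $t\in(0,T)$ the traces $u_1(t,-R)$ and $v_1(t,-R)$ are both determined solely by $\overline u$ and therefore coincide, so the right-hand side above is zero. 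This yields $\int_0^T f(u_1(t,0))\,dt=\int_0^T f(v_1(t,0))\,dt$. For item (2) I would run the identical computation on $(0,R)\times(0,T)$, where the balance reads $\int_0^R[u_2(T,x)-\overline u(x)]\,dx + \int_0^T[f(u_2(t,R)) - f(u_2(t,0))]\,dt = 0$; subtracting the identities for $u_2$ and $v_2$ cancels the final-time integrals via $u_2(T,\cdot)=v_2(T,\cdot)$, while finite speed of propagation (now for rightward-moving signals from $x=0$) gives $u_2(\cdot,R)=v_2(\cdot,R)$ for $R>\Lambda T$, proving the claim.

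I expect the main obstacle to be the rigorous justification of the flux-balance identity with boundary traces and, above all, the finite-speed-of-propagation claim in the initial-boundary value setting: one must argue that, in the region lying to the left of the boundary's cone of influence, the two solutions agree. This can be made precise by comparing each $u_i$ with the entropy solution of a whole-line Cauchy problem whose initial datum extends $\overline u$, and by invoking Kruzhkov's $\mathbf{L}^1$-contraction on backward characteristic cones; the trace existence needed to write the balance is supplied by part (i) of Definition~\ref{def-ent-sol-ibvp} and the $\mathbf{BV}_{\rm loc}$ regularity of the solutions.
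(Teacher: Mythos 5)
Your proposal is correct and follows essentially the same route as the paper: a flux balance (Divergence Theorem) on a rectangle $(-R,0)\times(0,T)$ combined with finite speed of propagation to make the traces at $x=-R$ coincide for $R$ large. The only minor imprecision is your choice of $\Lambda$ via $\|\overline u\|_{\LL{\infty}}$ alone: since the solutions of the initial-boundary value problem may take values coming from the boundary data, the speed bound should be taken over the range of the solutions themselves, as in the paper's choice $L>\max\{\|f'(u_1)\|_{\LL{\infty}},\|f'(v_1)\|_{\LL{\infty}}\}$.
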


\begin{proof} 
We prove only statement $1$, since the proof of $2$ is completely analogous. 

Take $L > \max\left\{ \|f'(u_1)\|_{\LL\infty([0,T]\times (-\infty, 0))}, 
  \|f'(u_2)\|_{\LL\infty([0,T]\times (-\infty, 0))}\right\}$ and observe that for all $x<-LT$, the  values of  entropy  
   admissible  weak  solutions $u_1,v_1$ to~\eqref{bdry-incoming} are
determined only by the initial data $\overline u$: this means that $f(u_1(t,x))=f(u_2(t,x))$ for a.e. $t\in [0,T]$ and $x<-LT$.

Then, by the Divergence Theorem   we find that 
\begin{multline*} \int_0^T f(u_1(t,0))dt= \int_0^T f(u_1(t,-LT^+))dt+\int_{-LT}^0 (\overline u(x)-u_1(T,x)) dx
\\ = \int_0^T f(v_1(t,-LT^+))dt+\int_{-LT}^0 (\overline u(x)-v_1(T,x)) dx=\int_0^T
f(v_1(t,0))dt.\end{multline*} 
\end{proof} 

 Now we are ready to prove \Cref{maxtheo1}. We repeat the statement for more clarity.  

\begin{theorem}
  \label{teo:classical-sol} 
  Given $\overline u\in \mathbf{L^\infty}(\R; \R)$,
  let $\widetilde u$ be the unique entropy admissible solution
  to~\eqref{eq:CP-classic}. 
  Then, for every $T>0$, we have that  $f(\widetilde u(\cdot, 0))$
  solves the maximization problem~\eqref{max-J}. 
  More precisely, there holds   
  \begin{equation*}
    \int_0^T f(\widetilde u(t,0)) \, dt =
    \max_{\gamma \in {\mathcal{U}}} \int_0^T \gamma(t)\,  dt.
  \end{equation*}
\end{theorem}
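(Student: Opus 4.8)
The plan is to prove that $\int_0^T \gamma(t)\,dt \le \int_0^T \widetilde\gamma(t)\,dt$ for every admissible control $\gamma\in\mathcal U$; since $\widetilde\gamma\in\mathcal U$ by Remark~\ref{rem:entr-sol}, this yields the maximality asserted in the statement. The guiding idea is to convert this time\-/integrated statement about fluxes into a \emph{spatial} statement about the mass distribution of the solutions at the final time $T$, using the conservative structure exactly as in Lemma~\ref{uniq}, and then to exploit the fact that the glued admissible solution is itself a weak solution on the whole line, so that it can be compared directly against the unique entropy solution $\widetilde u$ of \eqref{eq:CP-classic}.

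First I would fix $L>\norm{f'\circ \overline u}_{\LL{\infty}}$ and set $R=LT$, so that by finite speed of propagation the flux $f(u_1(\cdot,-R))$ is determined by the initial datum alone and is therefore common to $u_1$ and to $\widetilde u$. Integrating $\pt u+\px f(u)=0$ over $[0,T]\times[-R,0]$, precisely the computation carried out in Lemma~\ref{uniq}, gives $\int_0^T \gamma\,dt = C_1-\int_{-R}^0 u_1(T,x)\,dx$, where $C_1$ depends only on $\overline u$; the same identity, with the same constant $C_1$, holds for $\widetilde\gamma$ and $\widetilde u$. Subtracting, the whole problem reduces to showing
\[
  \int_{-R}^0 \big(u_1(T,x)-\widetilde u(T,x)\big)\,dx \ge 0,
\]
that is, that the entropy solution retains the least mass upstream (equivalently, by conservation of total mass on $[-R,R]$, that it deposits the most mass on the outgoing edge).

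To establish this I would use that, since $\gamma$ is admissible, $f(u_1(t,0^-))=f(u_2(t,0^+))$ by \eqref{gammat}; hence the glued function $u=u_1\mathbf 1_{\{x<0\}}+u_2\mathbf 1_{\{x>0\}}$ has no flux jump across $x=0$ and is a weak solution of $\pt u+\px f(u)=0$ on all of $\R$ with datum $\overline u$. It coincides with the entropy solution $\widetilde u$ everywhere except possibly at the junction, where it may carry an anti-entropic \emph{stationary} discontinuity (left trace $\ge\theta$, right trace $\le\theta$), precisely the configuration that throttles the junction flux. Passing to the primitives $P_u(t,x)=\int_{-R}^x u(t,y)\,dy$ and $\widetilde P(t,x)=\int_{-R}^x \widetilde u(t,y)\,dy$, which solve the same Hamilton--Jacobi equation with the same data (same initial profile, same far-field flux at $x=-R$), I would invoke that for the strictly concave $f$ of \eqref{def-theta} the entropy solution has a \emph{semiconvex} primitive and that $\widetilde P$ is the minimal a.e.\ solution of this Hamilton--Jacobi problem (Lax--Oleinik representation, equivalently the method of generalized characteristics of Dafermos~\cite{d,daf-book}). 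This gives $\widetilde P(T,\cdot)\le P_u(T,\cdot)$ pointwise, and evaluating at $x=0$ is exactly the displayed mass inequality.

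The hard part will be the one-sided comparison of the two primitives at the junction. On each open edge both $u$ and $\widetilde u$ are entropic, so both primitives are semiconvex there; the only obstruction to $\widetilde P\le P_u$ is the concave corner that $P_u$ may develop at $x=0$ when the admissible solution sustains an anti-entropic stationary shock. What must be shown is that such a corner can only push $P_u$ \emph{above} the semiconvex envelope $\widetilde P$, uniformly in $t\in[0,T]$ --- intuitively, that flux held back at the junction is stored as upstream mass and can never be fully recovered so as to overtake, by time $T$, the mass transported by the entropy solution. I would carry this out by a careful generalized-characteristics analysis near $x=0$, tracking the extremal backward characteristics of $\widetilde u$ against those of $u$ and comparing the swept regions; the equivalent outgoing-edge inequality $\int_0^R(\widetilde u(T,\cdot)-u_2(T,\cdot))\,dx\ge 0$, which follows from total-mass conservation, serves as a consistency check throughout.
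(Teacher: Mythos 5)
Your reduction is the same as the paper's: fix $L>\|f'\circ\overline u\|_{\LL{\infty}}$, use the divergence identity on $[0,T]\times[-LT,0]$ (exactly as in Lemma~\ref{uniq}) to convert the flux comparison into the mass inequality $\int_{-LT}^0 u_1(T,x)\,dx\ge\int_{-LT}^0\widetilde u(T,x)\,dx$. Where you diverge is in how this inequality is proved. The paper works directly at the level of the conservation law: it distinguishes whether the minimal backward characteristic of $\widetilde u$ through $(T,0)$ lands in $x\le 0$ or $x>0$, and in the nontrivial case shows the \emph{pointwise} bound $u_1(T,x)\ge\widetilde u(T,x)$ on the interval $(\bar x,0)$ where the two solutions can differ (all characteristics of $u_1$ there must exit through the junction, forcing $u_1(T,x)>\theta$), with equality to the left of $\bar x$. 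You instead pass to primitives and compare them as solutions of a Hamilton--Jacobi equation. That route is viable and in some respects cleaner (it needs only the integrated inequality and no case split): the glued function $u$ is indeed a weak solution on all of $\R$ because the flux traces match, its primitive is Lipschitz and satisfies $P_t+f(P_x)=f(u(t,-LT))$ a.e.\ with the same data as $\widetilde P$, and for concave $f$ the Lax--Oleinik primitive is the pointwise \emph{minimum} among all Lipschitz a.e.\ solutions with the same initial datum (the standard Young-inequality-along-straight-lines argument, applied to $-P$ and the convex Hamiltonian $p\mapsto -f(-p)$). Invoking that classical fact closes the proof at once.

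Two caveats. First, your claim that the glued solution ``coincides with $\widetilde u$ everywhere except possibly at the junction'' is false: an anti-entropic stationary discontinuity at $x=0$ propagates its effect into an entire space--time region (see the explicit solutions in Section~\ref{sectionesempi}), so you should not lean on that picture. Second, and more importantly, as written the decisive step is not actually carried out: you first invoke minimality of $\widetilde P$ as if it were a black box, and then declare the comparison at the junction to be ``the hard part'' to be handled by an unspecified generalized-characteristics analysis. These two stances are inconsistent --- if the minimality principle for a.e.\ Lipschitz solutions of the concave Hamilton--Jacobi equation is available, there is no residual hard part; if it is not, your proof has a hole precisely where the paper does its real work. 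To make the argument complete you should either prove (or precisely cite) that minimality principle, or replace it by the paper's direct comparison of $u_1(T,\cdot)$ and $\widetilde u(T,\cdot)$ via backward characteristics.
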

 
\begin{proof}
  Consider an arbitrary $\gamma\in \mathcal{U}$, and define the corresponding solution 
  $u(t,x; \gamma)$ as in~\eqref{ugamma-2} in \Cref{defiammissibile}.  
  From now on, we denote $u_1(t,x) = u(t,x; \gamma)$ for $x<0$ and 
  $u_2(t,x) = u(t,x; \gamma)$ for $x>0$.
  Note that, by \Cref{defiammissibile}, 
  \[f(u_2(t,0)) = f(u_1(t,0))\qquad\text{for a.e. $t\in [0,T]$}.\]

  Let $T>0$ be fixed. 
  Consider the minimal backward characteristic $\widetilde \xi$ for $\widetilde u$ 
  passing at $(T,0)$. Without loss of generality we may assume  that $\widetilde \xi(0)\leq 0$,
  which implies that all the values of $\widetilde{u}(T, x)$, for $x < 0$,
  are determined by $\overline u_{|(-\infty, 0)}$. 
  We claim that
  \begin{equation}
    \label{claim}
    \int_0^{T} f\left( \widetilde u(s, 0)\right)\, ds \geq 
    \int_0^T f\left(u_1(s, 0)\right)\, ds.
  \end{equation}
  In the case $\widetilde \xi(0) > 0$, one
  can prove with the same arguments that
  \[
    \int_0^{T} f\left( \widetilde u(s, 0)\right)\, ds \geq 
    \int_0^T f\left(u_2(s, 0)\right)\, ds.
  \]

  Assume first that there exists a 
  backward characteristic $\xi$ for $u_1$ passing at $(T,0^-)$ with  $\xi(0)<0$. 
  In this case we have that the value of $u$ on the line $(T, x)$, 
  for $x\leq 0$, is completely determined by the initial datum $\overline u$: 
  therefore $\widetilde u(T,x) = u_1(T,x)$ for a.e. $x\leq 0$. By Lemma \ref{uniq} applied to $\widetilde u, u_1$ we get  
  \begin{align*}
    \int_0^{T} f(u_1(t,0))dt
    =\int_0^{T} f(\widetilde u(t,0))dt,
  \end{align*}
  which implies the claim~\eqref{claim}.  

  Assume now that all the 
  backward characteristics $\xi$ for $u_1$ passing at $(T,0^-)$ satisfy $\xi(0) \ge 0$. 
  Define now the point
  \begin{equation*}
    \bar x = \inf \left\{x\leq 0\colon\,  f'(u_1(T,z^+))T \le z \, 
    \text{ for a.e. $z\in (x,0)$}\right\}.
  \end{equation*}  
  We deduce that  necessarily $u_1(T,x)=\widetilde u(T,x)$ for a.e. $x< \bar x$. 
  Moreover,  $u_1(T,x)> \theta$ for a.e. $x\in (\bar x, 0)$ 
  since all characteristics for $u_1$ passing at $(T,x)$ for $x\in (\bar x, 0)$ must cross $x=0$ and have strictly negative slope.
  This implies that $u_1(T,x)\geq \widetilde u(T,x)$ for a.e. $x\in (\bar x, 0)$ since either $\widetilde u(T,x)\leq \theta$
   or the negative slope of the backward characteristics for $\widetilde u$ at $(T,x)$ is higher. 
  In conclusion choosing $L$ as above, 
  \[\int_{-LT}^0 \widetilde u(T,x)dx\leq \int_{-LT}^0  u_1(T,x)dx. \]
  So, repeating the argument based on the Divergence Theorem 
  we have that
  \begin{align*}
    \int_0^{T} f(u_1(t,0))dt
    & = \int_0^{T} f(u_1(t, -LT))dt + \int_{-LT}^0 \overline u(x) dx -\int_{-LT}^0 u_1(T,x)dx
    \\
    & \leq  \int_0^{T} f(\widetilde u(t, -LT))dt + \int_{-LT}^0  \overline u (x)dx 
    -\int_{-LT}^0 \widetilde u(T,x)dx
    = \int_0^{T} f(\widetilde u(t,0))dt
  \end{align*}
  proving the claim \eqref{claim}. 
\end{proof}

\section{Non entropic solutions to the  min-max problem (\ref{maxmin})}
\label{sectionesempi} 
In this section we propose two examples showing that, in general, the entropy solution
to~\eqref{eq:CP-classic} does not satisfy the min-max problem~\eqref{maxmin}.
They are characterized by an initial datum $\overline{u}$ 
piecewise constant but not monotone. In \Cref{shrar} we consider
an initial condition $\overline{u}$ piecewise constant with two points
of discontinuity located at the left of the interface $x=0$.
Instead, in \Cref{ex:Fabio} one point of discontinuity is located at the left of the
interface, while the second one at the right.
The general idea consists in producing waves from the interface $x=0$,
which interact with 
shocks and rarefaction fronts coming from the initial data, so to
achieve the maximum amount of possible cancellations.

\begin{example}
  \label{shrar} 
  \upshape 
  Consider the Cauchy problem~\eqref{eq:CP-classic} with initial condition
  \begin{align*}
    \overline{u} (x) & = \left\{
      \begin{array}{l@{\qquad}l}
        a_1, & \textrm{if}\,\, x <  \overline{x}_1,
        \\
        a_2, & \textrm{if}\,\,  \overline{x}_1 < x <  \overline{x}_2,
        \\
        a_3, & \textrm{if}\,\, x >  \overline{x}_2,
      \end{array}
    \right.
  \end{align*}
  where $\overline{x}_1 <  \overline{x}_2 < 0$ and
  \begin{equation}
    \label{eq:ex-3-ass1}
    a_1 < a_3 < a_2 < \theta;
  \end{equation}
  see Figure~\ref{fig:ex-3-flux}.
  \begin{figure} 
    \centering
    \begin{tikzpicture}[line cap=round,line join=round,x=1.cm,y=1.cm]
  
      \draw[->, line width=1.1pt] (2., .5) -- (2., 5.) node[below right]{$f$};
      \draw[->, line width=1.1pt] (2., .5) -- (11., .5) node[above]{$u$};

      \draw[variable=\x, domain=2:10, line width=1.1pt, name path=flow] plot ({\x}, {0.25*(\x -2)*(10 - \x) + 0.5});

      \node[inner sep=0, anchor=north] at (6, 0.4) {$\theta$};

      \draw[name path=theta, draw=none] (6, 0.5) -- (6, 5);

      \path[name intersections={of=flow and theta, by=P1}];

      \draw[color=blue, fill=blue] (P1) circle (2pt);
      \draw[dashed] (6, 0.5) -- (P1);

      \node[inner sep=0, anchor=north] at (3, 0.4) {$a_1$};
      \draw[name path=a1, draw=none] (3, 0.5) -- (3, 5);
      \path[name intersections={of=flow and a1, by=P2}];
      \draw[color=blue, fill=blue] (P2) circle (2pt);
      \draw[dashed] (3, 0.5) -- (P2);

      \newdimen\yfa
      \pgfextracty{\yfa}{\pgfpointanchor{P2}{center}}
      \draw[dashed] (2,\yfa) node[left]{$f(a_1)$}-- (10, \yfa);

      \node[inner sep=0, anchor=north] at (5, 0.4) {$a_2$};
      \draw[name path=a2, draw=none] (5, 0.5) -- (5, 5);
      \path[name intersections={of=flow and a2, by=P3}];
      \draw[color=blue, fill=blue] (P3) circle (2pt);
      \draw[dashed] (5, 0.5) -- (P3);

      \newdimen\yfa
      \pgfextracty{\yfa}{\pgfpointanchor{P3}{center}}
      \draw[dashed] (2,\yfa) node[left]{$f(a_2)$}-- (10, \yfa);

      \node[inner sep=0, anchor=north west] at (4, 0.4) {$a_3$};
      \draw[name path=a3, draw=none] (4, 0.5) -- (4, 5);
      \path[name intersections={of=flow and a3, by=P4}];
      \draw[color=blue, fill=blue] (P4) circle (2pt);
      \draw[dashed] (4, 0.5) -- (P4);

      \newdimen\yfa
      \pgfextracty{\yfa}{\pgfpointanchor{P4}{center}}
      \draw[dashed] (2,\yfa) node[left]{$f(a_3)$}-- (10, \yfa);

      \node[inner sep=0, anchor=north] at (8.3, 0.4) {$b$};
      \draw[name path=b, draw=none] (8.3, 0.5) -- (8.3, 5);
      \path[name intersections={of=flow and b, by=P5}];
      \draw[color=blue, fill=blue] (P5) circle (2pt);
      \draw[dashed] (8.3, 0.5) -- (P5);

      \newdimen\yfa
      \pgfextracty{\yfa}{\pgfpointanchor{P5}{center}}
      \draw[dashed, name path=fc] (2,\yfa) node[below left]{$f(b) = f(c)$}-- (10, \yfa);
      \path[name intersections={of=flow and fc, by=P6}];
      \draw[color=blue, fill=blue] (P6) circle (2pt);
      \newdimen\yfb
      \pgfextractx{\yfb}{\pgfpointanchor{P6}{center}}
      \draw[dashed] (\yfb, 0.5) -- (\yfb, \yfa);
      \node[inner sep=0, anchor=north east] at (\yfb, 0.4) {$c$};
    \end{tikzpicture}
    \caption{The configurations of \Cref{shrar}.}
    \label{fig:ex-3-flux}
  \end{figure}
  Suppose also that 
  \begin{equation}
    \label{eq:ex-3-ass2}
    - \frac{ \overline{x}_2}{f'(a_2)} < -\frac{ \overline{x}_1}{\lambda} < T,
  \end{equation}
  where $\lambda = \frac{f(a_2) - f(a_1)}{a_2 - a_1}$.
  The discontinuity at the point $\overline x_1$ originates a shock wave with positive
  speed given by $\lambda$. Instead at the point $\overline x_2$ a rarefaction wave with positive speed originates.
  Assumption~\eqref{eq:ex-3-ass2} implies that the shock wave exiting $\overline{x}_1$
  reaches the interface $x=0$ before interacting with the rarefaction wave.
  Hence such interaction happens for $x > 0$ generating waves with positive speed
  due to~\eqref{eq:ex-3-ass1}. Therefore
  the entropy solution $\widetilde u$ in the region $x < 0$
  is given by
  \begin{equation}
    \label{eq:entropy_solution_example1}
    \widetilde u(t,x) =
    \left\{
      \begin{array}{l@{\quad}l}
        a_1, & \textrm{ if } x <  \overline{x}_1 + \lambda t,
        \\
        a_2, & \textrm{ if }  \overline{x}_1 + \lambda t < x <  \overline{x}_2 + f'(a_2) t,
        \\
        (f')^{-1} \left(\frac{x -  \overline{x}_2}{t}\right),  
        & \textrm{ if } f'(a_2) t < x -  \overline{x}_2 < f'(a_3) t,
        \\
        a_3, & \textrm{ if } x >  \overline{x}_2 + f'(a_3) t;
      \end{array}
    \right.
  \end{equation}
  see \Cref{fig:ex-3-fig1}.
  Denote with $\widetilde{\gamma}(t) = f \left( \widetilde{u}(t, 0^-)\right)$ the boundary
  control associated with the entropy solution $\widetilde{u}$. Note that $\widetilde{\gamma}(t)$ is not monotone.   Recalling the definition of
  the functional $\mathcal F_{[0, T]}$, see~\eqref{tv-def}, we get
  \begin{equation}
    \label{eq:F-entropy-ex1}
    \mathcal{F}_{[0,T]} (\widetilde \gamma) = 2f(a_2)-f(a_3)-f(a_1).
  \end{equation}
  
  Choose boundary data $b$ and $c$ such 
  \begin{equation}\label{prova1}
    c < \theta < b, \qquad f(a_1) < f(b) = f(c) \leq  f(a_3)<f(a_2),\qquad f(a_3)-f(b)<f(a_2)-f(a_3),
  \end{equation}
  see \Cref{fig:ex-3-flux}, and consider the constant boundary data $k_1= b, k_2=c$. 

  \begin{figure}
    \centering
    \begin{tikzpicture}[line cap=round,line join=round,x=1.cm,y=1.cm]
  
      \draw[->, line width=1.1pt] (6.5, .5) -- (6.5, 5.) node[right]{$t$};
      \draw[->, line width=1.1pt] (2., .5) -- (11., .5) node[above]{$x$};

      \node[inner sep=0, anchor=north] at (3, 0.4) {$\overline{x}_1$};
      \node[inner sep=0, anchor=north] at (5, 0.4) {$\overline{x}_2$};



      \draw[line width=.6pt] (3, 0.5) -- (8, 4) to [out=30, in=195] (9., 4.5) -- (10.5, 4.8);
      \draw[line width=.6pt] (5, 0.5) -- (8, 4) ;
      \draw[line width=.6pt] (5, 0.5) -- (9, 4.5);
      \begin{scope}
        \clip (3, 0.5) -- (8, 4) to [out=30, in=195] (9., 4.5) -- (10.5, 4.8) -- (10.5, 0.5) -- cycle;
        \draw[line width=.6pt] (5, 0.5) -- (9., 5) ;
        \draw[line width=.6pt] (5, 0.5) -- (9.15, 5) ;
        \draw[line width=.6pt] (5, 0.5) -- (9.25, 5) ;
        \draw[line width=.6pt] (5, 0.5) -- (9.39, 5) ;
      \end{scope}

      \node[inner sep=0] at (4, 3) {$a_1$};
      \node[inner sep=0] at (5, 1.5) {$a_2$};
      \node[inner sep=0] at (7.5, 1.5) {$a_3$};

      \draw[line width=.6pt] (6.4, 4.5) -- (6.6, 4.5) node[right]{$T$};

    \end{tikzpicture}
    \caption{The entropy solution $\widetilde u$, defined in~\eqref{eq:entropy_solution_example1},
      of Example~\ref{shrar}.}
    \label{fig:ex-3-fig1}
  \end{figure}
  The choices~\eqref{eq:ex-3-ass1} and~\eqref{prova1} imply that
  the wave generated by the jump $(a_1, b)$
  is a shock with positive speed, while
  the waves generated by the jumps $(a_2, b)$ and $(a_3, b)$
  are both shocks with negative speed. 
  Note that it is possible to select the points $\overline{x}_1$ and $\overline{x}_2$ such that
  the wave generated at $\overline{x}_1$ arrives at the interface $x=0$,
  after interacting with the wave $(a_2, b)$, at time $\tau < T$.
  Therefore the maps $u_1$ and $u_2$ solving respectively the initial-boundary value problems~\eqref{bdry-incoming}
  and~\eqref{bdry-outgoing} with controls $k_i$ are
  \begin{equation}
    \label{eq:u_1_example1}
    u_1(t,x) = 
      \begin{cases}
        a_1, &\qquad \textrm{ if } (t,x) \in A_1,
        \\
        a_2, &\qquad \textrm{ if } (t,x) \in A_2, 
        \\
        (f')^{-1} \left(\frac{x -  \overline{x}_2}{t}\right),  
        &\qquad \textrm{ if } (t,x) \in A_4,
        \\
        a_3, &\qquad \textrm{ if } (t,x) \in A_3, 
        \\
        b, &\qquad \textrm{ otherwise },
      \end{cases}
      \end{equation}
  and
  \begin{equation}
    \label{eq:u_2_example1}
    u_2(t,x)  =
      \begin{cases}          
        c, &\qquad \textrm{ if } \max\left\{0, \frac{f(c) - f(a_1)}{c - a_1}
          \, (t - \tau)\right\} < x < \frac{f(c) - f(a_3)}{c - a_3} \, t,
        \vspace{.2cm}\\
        a_3, &\qquad \textrm{ if } x > \frac{f(c) - f(a_3)}{c - a_3} \, t,
        \vspace{.2cm}\\
        a_1, &\qquad \textrm{ if } x < \frac{f(c) - f(a_1)}{c - a_1} \, (t - \tau),
      \end{cases}
  \end{equation}
  where 
  the sets $A_1$, $A_2$, $A_3$, $A_4$ are suitable subsets of the region $x<0$,
  see \Cref{fig:ex-3-fig-2}. 
  
  Note that $\gamma(t)=f(u_1(t, 0))=f(u_2(t, 0))\in \mathcal{U}$. Moreover, since $u_1(T,x)=\widetilde u(T,x)$ for a.e. $x\leq 0$,  by Lemma \ref{uniq} and Theorem \ref{teo:classical-sol}, $\gamma$  is a solution 
  to the maximization problem~(\ref{max-J}). Moreover, by~\eqref{prova1},
  \begin{equation}
    \label{eq:F-nonentropy-ex1}
    \mathcal{F}_{[0,T]} (\gamma) = \abs{f(b) - f(a_1)} + \abs{f(a_3) - f(b)} + \abs{f(a_3) - f(c)}
    =  2f(a_3)-f(a_1) -f(b).
  \end{equation}
  Hence~\eqref{eq:F-entropy-ex1},~\eqref{prova1}, and~\eqref{eq:F-nonentropy-ex1} imply that
  \begin{align*}
    \mathcal{F}_{[0,T]} (\gamma)
    -\mathcal{F}_{[0,T]} (\widetilde \gamma) =
    &   2f(a_3)-f(a_1)-f(b)-2f(a_2)+f(a_3)+f(a_1)
    \\
    = & 2f(a_3)-2f(a_2)+f(a_3)-f(b)
    \\
    < & 2f(a_3)-2f(a_2) + f(a_2) - f(a_3)=f(a_3) - f(a_2) <0, 
  \end{align*}
  showing that the entropy solution $\widetilde{u}$ is not optimal for the functional
  $\mathcal F_{[0,T]}$.

We conclude observing that  in this case we may compute explicitly  the solution of the minimization problem \eqref{maxmin}. Indeed if we take $c=a_3$ in \eqref{prova1}, we obtain \[\mathcal{F}_{[0,T]} (\gamma)=f(a_3)-f(a_1)=\min_{\gamma\in\mathcal{U}_{max}^M} \mathcal{F}_{[0,T]} (\gamma).\]  
The shock discontinuity generated by $(a_3,b)$, with $a_3=c$, emerges tangentially from
the junction $x = 0$.
 
  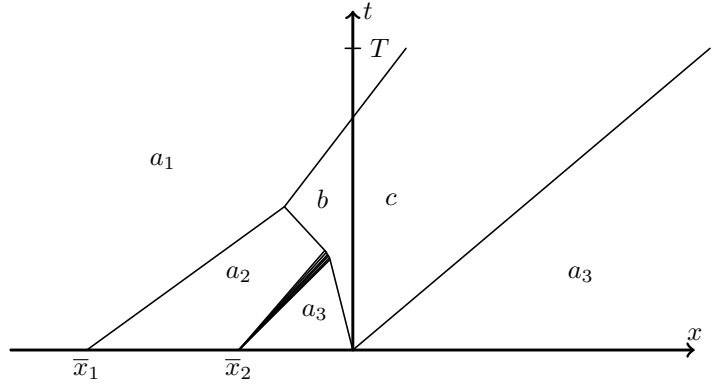
\begin{figure}
    \centering
    \begin{tikzpicture}[line cap=round,line join=round,x=1.cm,y=1.cm]
  
      \draw[->, line width=1.1pt] (6.5, .5) -- (6.5, 5.) node[right]{$t$};
      \draw[->, line width=1.1pt] (2., .5) -- (11., .5) node[above]{$x$};

      \node[inner sep=0, anchor=north] at (3, 0.4) {$\overline{x}_1$};
      \node[inner sep=0, anchor=north] at (5, 0.4) {$\overline{x}_2$};



      \draw[line width=.6pt] (6.5, 0.5) -- (6.2, 1.7) to [out=100, in=290] (6.15, 1.8)
      -- (5.6, 2.4);

      \draw[line width=.6pt] (3, 0.5) -- (5.6, 2.4);
      \begin{scope}
        \clip (6.5, 0.5) -- (6.2, 1.7) to [out=100, in=290] (6.15, 1.8)
        -- (5.6, 2.4) -- (3, 0.5) -- cycle;
        \draw[line width=.6pt] (5, 0.5) -- (8, 4) ;
        \draw[line width=.6pt] (5, 0.5) -- (9, 4.5);
        \draw[line width=.6pt] (5, 0.5) -- (9., 5) ;
        \draw[line width=.6pt] (5, 0.5) -- (9.15, 5) ;
        \draw[line width=.6pt] (5, 0.5) -- (9.25, 5) ;
        \draw[line width=.6pt] (5, 0.5) -- (9.39, 5) ;
      \end{scope}

      \draw[line width=.6pt] (5.6, 2.4) -- (7.2, 4.5); 
      \draw[line width=.6pt] (6.5, 0.5) -- (11.2, 4.5); 
      \node[inner sep=0] at (4, 3) {$a_1$};
      \node[inner sep=0] at (5, 1.5) {$a_2$};
      \node[inner sep=0] at (9.5, 1.5) {$a_3$};
      \node[inner sep=0] at (6., 1.) {$a_3$};
      \node[inner sep=0] at (6.1, 2.5) {$b$};
      \node[inner sep=0] at (7., 2.5) {$c$};

      \draw[line width=.6pt] (6.4, 4.5) -- (6.6, 4.5) node[right]{$T$};

    \end{tikzpicture}
    \caption{The solution $u_1$ and $u_2$, defined in~\eqref{eq:u_1_example1} and
    in~\eqref{eq:u_2_example1}, of \Cref{shrar}.}
    \label{fig:ex-3-fig-2}
  \end{figure}
\end{example}

\begin{example}
  \label{ex:Fabio}
  \upshape 
  Consider the Cauchy problem~\eqref{eq:CP-classic} with initial condition
  \begin{align}\label{datoinizialees}
    \overline{u} (x) & = \left\{
      \begin{array}{l@{\qquad}l}
        a_1, & \textrm{if}\,\, x <  \overline{x}_1,
        \\
        a_2, & \textrm{if}\,\,  \overline{x}_1 < x <  \overline{x}_2,
        \\
        a_3, & \textrm{if}\,\, x >  \overline{x}_2,
      \end{array}
    \right.
  \end{align}
  where $\overline{x}_1 < 0 < \overline{x}_2$ and the densities
  \begin{equation}
    \label{eq:ex-fabio-ass1}
    a_1 <  \theta < a_3 <  a_2,\quad\text{with} \quad 
    f(a_1) < f(a_2)  
    < f(a_3) ;
  \end{equation}
  see \Cref{fig:ex-fabio-flux}.
  \begin{figure} 
    \centering
    \begin{tikzpicture}[line cap=round,line join=round,x=1.cm,y=1.cm]
  
      \draw[->, line width=1.1pt] (2., .5) -- (2., 5.) node[below right]{$f$};
      \draw[->, line width=1.1pt] (2., .5) -- (11., .5) node[above]{$u$};

      \draw[variable=\x, domain=2:10, line width=1.1pt, name path=flow] plot ({\x}, {0.25*(\x -2)*(10 - \x) + 0.5});

      \node[inner sep=0, anchor=north] at (6, 0.4) {$\theta$};

      \draw[name path=theta, draw=none] (6, 0.5) -- (6, 5);

      \path[name intersections={of=flow and theta, by=P1}];

      \draw[color=blue, fill=blue] (P1) circle (2pt);
      \draw[dashed] (6, 0.5) -- (P1);

      \node[inner sep=0, anchor=north] at (3, 0.4) {$a_1$};
      \draw[name path=a1, draw=none] (3, 0.5) -- (3, 5);
      \path[name intersections={of=flow and a1, by=P2}];
      \draw[color=blue, fill=blue] (P2) circle (2pt);
      \draw[dashed] (3, 0.5) -- (P2);

      \newdimen\yfa
      \pgfextracty{\yfa}{\pgfpointanchor{P2}{center}}
      \draw[dashed] (2,\yfa) node[left, name path=fa1]{$f(a_1)$}-- (10, \yfa);
      \draw[draw=none, name path=fa1] (7,\yfa) -- (10, \yfa);

      \path[name intersections={of=flow and fa1, by=FA1}];
      \draw[color=blue, fill=blue] (FA1) circle (2pt);
      \newdimen\yfb
      \pgfextractx{\yfb}{\pgfpointanchor{FA1}{center}}
      \draw[dashed] (\yfb, 0.5) node[below]{$\overline{a}_1$} -- (FA1);

      \node[inner sep=0, anchor=north] at (8.5, 0.4) {$a_2$};
      \draw[name path=a2, draw=none] (8.5, 0.5) -- (8.5, 5);
      \path[name intersections={of=flow and a2, by=P3}];
      \draw[color=blue, fill=blue] (P3) circle (2pt);
      \draw[dashed] (8.5, 0.5) -- (P3);

      \newdimen\yfa
      \pgfextracty{\yfa}{\pgfpointanchor{P3}{center}}
      \draw[dashed, name path=fa2] (2,\yfa) node[left]{$f(a_2)$}-- (10, \yfa);

      \path[name intersections={of=flow and fa2, by=FA2}];
      \draw[color=blue, fill=blue] (FA2) circle (2pt);
      \newdimen\yfb
      \pgfextractx{\yfb}{\pgfpointanchor{FA2}{center}}
      \draw[dashed] (\yfb, 0.5) node[below]{$\overline{a}_2$} -- (FA2);

      \node[inner sep=0, anchor=north] at (7.5, 0.4) {$a_3$};
      \draw[name path=a3, draw=none] (7.5, 0.5) -- (7.5, 5);
      \path[name intersections={of=flow and a3, by=P4}];
      \draw[color=blue, fill=blue] (P4) circle (2pt);
      \draw[dashed] (7.5, 0.5) -- (P4);

      \newdimen\yfa
      \pgfextracty{\yfa}{\pgfpointanchor{P4}{center}}
      \draw[dashed, name path=fa3] (2,\yfa) node[left]{$f(a_3)$}-- (10, \yfa);

      \path[name intersections={of=flow and fa3, by=FA3}];
      \draw[color=blue, fill=blue] (FA3) circle (2pt);
      \newdimen\yfb
      \pgfextractx{\yfb}{\pgfpointanchor{FA3}{center}}
      \draw[dashed] (\yfb, 0.5) node[below]{$\overline{a}_3$} -- (FA3);

      \node[inner sep=0, anchor=north] at (8., 0.4) {$\overline a_4$};
      \draw[name path=a4, draw=none] (8, 0.5) -- (8, 5);
      \path[name intersections={of=flow and a4, by=P5}];
      \draw[color=blue, fill=blue] (P5) circle (2pt);
      \draw[dashed] (8, 0.5) -- (P5);

      \newdimen\yfa
      \pgfextracty{\yfa}{\pgfpointanchor{P5}{center}}
      \draw[dashed, name path=fa4] (2,\yfa) node[left]{$f(a_4)$}-- (10, \yfa);

      \path[name intersections={of=flow and fa4, by=FA4}];
      \draw[color=blue, fill=blue] (FA4) circle (2pt);
      \newdimen\yfb
      \pgfextractx{\yfb}{\pgfpointanchor{FA4}{center}}
      \draw[dashed] (\yfb, 0.5) node[below]{${a}_4$} -- (FA4);

    \end{tikzpicture}
    \caption{The configuration of \Cref{ex:Fabio}.}
    \label{fig:ex-fabio-flux}
  \end{figure}

  The discontinuity at the point $\overline x_1$ originates a shock wave with positive
  speed given by $\lambda = \frac{f(a_1) - f(a_2)}{a_1 - a_2}$, 
  whereas at the point $\overline x_2$ a rarefaction wave with negative speed originates.
  Choosing appropriately the points $\overline x_1$ and $\overline x_2$, by~\eqref{eq:ex-fabio-ass1}, 
  we may assume that  the rarefaction wave interacts
  with the shock wave in the semiplane $x<0$, being 
  completely canceled in the semiplane $x<0$,  and the resulting shock wave interacts with the interface $x=0$
  at time $t_3>t_2$; see \Cref{fig:ex-fabio-fig1}.  We define the first and the latter time of interaction of the rarefaction wave
  exiting $\overline{x}_2$ with $x=0$ as $t_1,t_2$: 
  \begin{equation}
    \label{eq:times_ex_2}
        t_1  \doteq  - \frac{\overline{x}_2}{f'(a_2)}<   - \frac{\overline{x}_2}{f'(a_3)}\doteq t_2.   
  \end{equation}
  We may compute explicitly the value $t_3$ as follows: 
  \begin{equation} 
    \label{t3}
    t_3  \doteq   \frac{a_1 \overline{x}_1 - a_3 
    \overline{x}_2 + a_2(\overline{x}_2 - \overline{x}_1)}{f(a_3) - f(a_1)}.
  \end{equation}
  The   explicit formula~\eqref{t3} can be obtained by  applying the Divergence Theorem to the vector field $(\widetilde u, f(\widetilde u))$ in  the  rectangular  domain 
 $(0, t_3) \times (\overline{x}_1, \overline{x}_2)$. 

  Recall that $t_3>t_2$. Without loss of generality we can assume that $t_3< T$; 
  see \Cref{fig:ex-fabio-fig1}. 
  \begin{figure}
    \centering
    \begin{tikzpicture}[line cap=round,line join=round,x=1.cm,y=1.cm]
  
      \draw[->, line width=1.1pt] (6.5, .5) -- (6.5, 5.) node[right]{$t$};
      \draw[->, line width=1.1pt] (2., .5) -- (11., .5) node[above]{$x$};

      \node[inner sep=0, anchor=north] at (4., 0.4) {$\overline{x}_1$};
      \node[inner sep=0, anchor=north] at (9, 0.4) {$\overline{x}_2$};

      \draw[line width=.6pt] (4., 0.5) -- (4.5, 2) to [out=70, in=195] (5.5, 3.) -- (10., 4.5);
      \draw[line width=.6pt] (9, 0.5) -- (4.5, 2) ;
      \draw[line width=.6pt] (9, 0.5) -- (5.5, 3.);
      \begin{scope}
        \clip (4., 0.5) -- (4.5, 2) to [out=70, in=195] (5.5, 3.) -- (10., 4.5) -- (10.5, 0.5) -- cycle;
        \draw[line width=.6pt] (9, 0.5) -- (4., 2.4) ;
        \draw[line width=.6pt] (9, 0.5) -- (4., 2.6) ;
        \draw[line width=.6pt] (9, 0.5) -- (4., 2.8) ;
        \draw[line width=.6pt] (9, 0.5) -- (4., 3.) ;
        \draw[line width=.6pt] (9, 0.5) -- (4., 3.2) ;
        \draw[line width=.6pt] (9, 0.5) -- (4., 3.4) ;
        \draw[line width=.6pt] (9, 0.5) -- (4., 3.6) ;
        \draw[line width=.6pt] (9, 0.5) -- (4., 3.8) ;
      \end{scope}

      \node[inner sep=0] at (4, 3) {$a_1$};
      \node[inner sep=0] at (5, 1.) {$a_2$};
      \node[inner sep=0] at (8.5, 2.5) {$a_3$};
      \node[inner sep=0, anchor=east] at (6.4, 1.1) {$t_1$};
      \node[inner sep=0, anchor=west] at (6.6, 2.4) {$t_2$};
      \node[inner sep=0, anchor=east] at (6.4, 3.5) {$t_3$};

      \draw[line width=.6pt] (6.4, 4.5) -- (6.6, 4.5) node[right]{$T$};

    \end{tikzpicture}
    \caption{The entropy solution $\widetilde u$ of \Cref{ex:Fabio}.
    At the location $\overline{x}_1$ a shock wave, connecting the states $a_1$ and $a_2$, originates.
    At the position $\overline{x}_2$ a rarefaction wave, connecting $a_2$ and $a_3$, is generated.
    This wave interacts with the interface $x=0$ in the time interval $[t_1, t_2]$. In the
    semiplane $x<0$ the shock wave interacts with the rarefaction one and the resulting
    shock travels with positive speed and reaches the interface $x=0$ at time $t_3 < T$.}
    \label{fig:ex-fabio-fig1}.
  \end{figure}
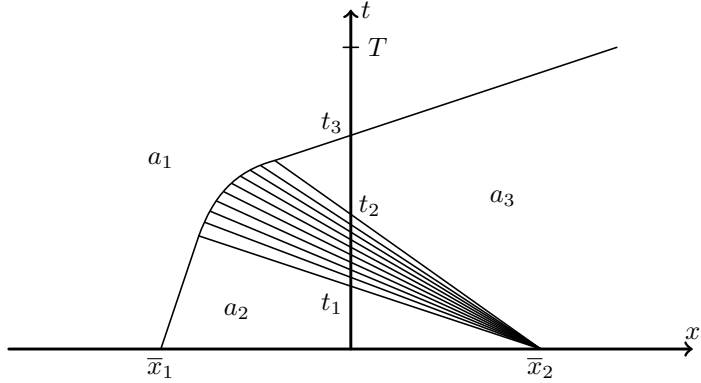

  Denote with $\widetilde{\gamma}(t) = f \left( \widetilde{u}(t, 0)\right)$ the boundary
  control associated with the entropy solution $\widetilde{u}$. Note that $\widetilde{\gamma}(t)$ is nondecreasing in $(0, t_2)$  and nonincreasing in $(t_2, T)$ and moreover $\max \widetilde \gamma=f(a_3).$ Recalling the definition of
  the functional $\mathcal F_{[0, T]}$, see~\eqref{tv-def}, we get
  \begin{equation}
    \label{eq:F-entropy-exFabio}
    \mathcal{F}_{[0,T]} (\widetilde \gamma) = 2f(a_3)-f(a_1)-f(a_2).
  \end{equation}
Note that in every interval $(0, t)$ with $t_3<t\leq T$ the flow $f(\widetilde u(t,0))$ is not monotone.

  We provide now a construction for a boundary inflow control  $\gamma(t)$ which is a solution
  to the maximization problem~(\ref{max-J}) and 
  $\mathcal{F}_{[0,T]} (\gamma) < \mathcal{F}_{[0,T]} (\widetilde \gamma)$. 

  The construction  of the boundary data is quite involved and interesting on its own in our opinion, 
  so  we   provide it in the following Proposition. 
  \begin{proposition}
    \label{propesempiofabio}
    Let $\overline u$ as defined in~\eqref{datoinizialees}. 
    Then there exist boundary data $k_1, k_2$ such that the solutions $u_1, u_2$ 
    to the boundary value problems~\eqref{bdry-incoming}, \eqref{bdry-outgoing} 
    are associated to an admissible boundary inflow control $\gamma(t)= f(u_1(t, 0))=f(u_2(t,0))$  which 
    is a solution 
    to the maximization problem~\eqref{max-J} and satisfies 
    $\mathcal{F}_{[0,T]} (\gamma) < \mathcal{F}_{[0,T]} (\widetilde \gamma)$.

    In particular, the entropy admissible solution $\widetilde u$ with initial data 
    $\overline u$ is not a solution to min-max problem~\eqref{maxmin}.
  \end{proposition}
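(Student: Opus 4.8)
The plan is to reduce the statement, through \Cref{uniq} and \Cref{teo:classical-sol}, to the explicit construction of a single admissible control that beats $\widetilde\gamma$. By \Cref{uniq} together with \Cref{teo:classical-sol}, a control maximizes~\eqref{max-J} as soon as its incoming solution reproduces the entropic profile at the final time, $u_1(T,\cdot)=\widetilde u(T,\cdot)$ a.e.\ on $(-\infty,0)$; here the resultant shock $(a_1,a_3)$ has crossed the node already at $t_3<T$, so $\widetilde u(T,\cdot)\equiv a_1$ on $x<0$. It therefore suffices to exhibit boundary data $k_1,k_2$ such that (i) the incoming edge is \emph{flushed} to the single state $a_1$ at time $T$, i.e.\ $u_1(T,\cdot)\equiv a_1$; (ii) the two traces satisfy the conservation relation~\eqref{gammat}, so that $\gamma\doteq f(u_1(\cdot,0))=f(u_2(\cdot,0))$ is admissible; and (iii) $\mathcal F_{[0,T]}(\gamma)<\mathcal F_{[0,T]}(\widetilde\gamma)=2f(a_3)-f(a_1)-f(a_2)$.

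I would fix the target flux first. Choose a super-critical state $\overline a_4\in(a_3,a_2)$, so that $f(a_2)<f(\overline a_4)<f(a_3)$, and let $a_4\doteq f_-^{-1}(f(\overline a_4))<\theta$ be its sub-critical conjugate, $f(a_4)=f(\overline a_4)$. Take $\gamma$ to be the piecewise-constant flux equal to $f(a_2)$ on $(0,t^*)$, to $f(a_4)$ on $(t^*,t_3')$, and to $f(a_1)$ on $(t_3',T)$, where the switching times $0<t^*<t_3'\le T$ are still to be fixed. Since $\gamma$ rises once, up to its maximum $f(a_4)$, and then decreases once to $f(a_1)$, its variation is $2f(a_4)-f(a_2)-f(a_1)$; and because $\overline u(0^-)=\overline u(0^+)=a_2$ while the two terminal traces will equal $a_1$, all four boundary-correction terms in~\eqref{tv-def} vanish. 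Thus $\mathcal F_{[0,T]}(\gamma)=2f(a_4)-f(a_1)-f(a_2)$, which is strictly below $\mathcal F_{[0,T]}(\widetilde\gamma)$ exactly because $f(a_4)<f(a_3)$; this already secures~(iii), and with $\gamma\in\mathcal{U}_{max}$ it shows that $\widetilde\gamma$ does not solve~\eqref{maxmin}.

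It then remains to realize this $\gamma$ simultaneously on both edges. On the incoming edge one leaves the initial shock $(a_1,a_2)$ free to travel to the right at speed $\lambda$ on $(0,t^*)$ (so the trace-flux equals $f(a_2)$), and at $t^*$ injects the constant datum $\overline a_4$: the shock $(\overline a_4,a_2)$ it generates has negative speed $(f(a_2)-f(a_4))/(a_2-\overline a_4)$, erodes the plateau $a_2$ from the node side, and merges with the initial shock into a single front $(a_1,\overline a_4)$ of positive speed reaching $x=0$ at $t_3'$, after which $u_1(\cdot,\cdot)\equiv a_1$. On the outgoing edge the difficulty is the left-moving rarefaction $(a_2,a_3)$: one holds the trace-flux at $f(a_2)$ on $(0,t^*)$ by injecting the sub-critical value $f_-^{-1}(f(a_2))$, whose Riemann problem against the incoming rarefaction produces a shock that absorbs it; at $t^*$ one switches to the datum $a_4$ (raising the trace-flux to $f(a_4)$) and finally, at $t_3'$, to the datum $a_1$ (lowering it to $f(a_1)$). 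Matching the switching times of the two edges yields $f(u_1(\cdot,0))=f(u_2(\cdot,0))$, hence $\gamma\in\mathcal U$, and the data $k_1,k_2$ are read off from these traces.

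The step I expect to be the main obstacle is the quantitative compatibility of the two constructions, i.e.\ choosing $\overline a_4$ and $t^*$ so that all of $0<t^*<t_3'\le T$ hold, that the incoming front is really flushed out by time $T$ at the lowered peak $f(a_4)<f(a_3)$, and that the outgoing switch from $f(a_2)$ up to $f(a_4)$ is compatible with the available supply (which forces $t^*$ to be no smaller than the first time $-\overline x_2/f'(\overline a_4)$ at which the rarefaction has delivered the value $\overline a_4$ to the node). Tracking the two incoming shocks gives $t_3'$ as an explicit function of $t^*,\overline a_4,\overline x_1$, while the mass identity $\int_0^T\gamma\,dt=\int_0^T\widetilde\gamma\,dt$ becomes automatic, via the Divergence Theorem, once $u_1(T,\cdot)\equiv a_1$; so the genuine requirement is only $t_3'\le T$. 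Letting $\overline a_4\to a_3^{+}$ the construction degenerates to the entropic flushing, for which $t_3<T$ holds with a strict margin, so by continuity there is a non-empty range $\overline a_4\in(a_3,a_3+\delta)$ in which all the timing inequalities can be met; any such $\overline a_4$ furnishes the required control and completes the proof.
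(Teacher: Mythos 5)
Your overall strategy coincides with the paper's: the same three--level piecewise--constant flux $f(a_2)\to f(a_4)\to f(a_1)$ with $f(a_2)<f(a_4)<f(a_3)$, the same reduction of maximality to flushing the incoming edge to $a_1$ by time $T$ via Lemma~\ref{uniq} and Theorem~\ref{teo:classical-sol}, and the same computation $\mathcal F_{[0,T]}(\gamma)=2f(a_4)-f(a_1)-f(a_2)$. The gap is exactly at the step you flag as the main obstacle, and it is not closed. You impose $t^*\geq \sigma_0\doteq -\overline x_2/f'(\overline a_4)$, which does make the no-return claim for the outgoing shock easy (its speed is then pointwise nonnegative), but it ruins the timing: the flushing time is pinned by the mass identity $\int_0^{t_3'}(g-f(a_1))\,dt=-\overline x_1(a_2-a_1)$ with $g=f(a_2)\mathbf 1_{(0,t^*)}+f(a_4)\mathbf 1_{(t^*,t_3')}$, and since $g(t)\leq f(\widetilde u(t,0^-))$ on $(0,t_3)$ with strict inequality on a set of positive measure (on $(t_1,t^*)$ and on $(\sigma_0,t_3)$), one gets $t_3'>t_3$ \emph{strictly}, uniformly as $\overline a_4\to a_3^+$. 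Indeed in that limit $t^*\to t_2$ and $t_3'\to \frac{-\overline x_1(a_2-a_1)+t_2(f(a_3)-f(a_2))}{f(a_3)-f(a_1)}$, which by strict concavity of $f$ is strictly larger than $t_3$. So your construction does \emph{not} degenerate to the entropic flushing, the continuity argument is based on a false premise, and $t_3'\leq T$ does not follow from the example's only hypothesis $t_3<T$.

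The paper resolves this by decoupling the two difficulties: the switching time $t_4\in(t_1,t_2)$ is fixed by the area condition~\eqref{tetto} (so that the step flux $f(a_2),f(a_3)$ has the same integral as $\widetilde\gamma$ over $(t_1,t_3)$), which forces $t_6(a_4)\to t_3$ as $a_4\to\overline a_3$ and hence $t_6(a_4)<T$ for $a_4$ close enough. The price is that $t_4<\sigma_0$, so right after $t_4$ the outgoing shock can momentarily move toward $x=0$; ruling out its return then requires the running-integral comparison \eqref{prop-t4a}--\eqref{prop-t4ab} combined with Lemma~\ref{uniq} (for a hypothetical return time $t^*\leq t_3$) and a slope argument (for $t^*>t_3$) — and the validity of \eqref{prop-t4ab} at its critical point is precisely what the area-balanced choice of $t_4$ guarantees. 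To repair your proof you would either have to adopt this choice of $t^*$ and supply the integral argument, or add an assumption on $T$ that the proposition does not make. A minor further point: the wave emitted into $x<0$ at $t^*$ connecting $a_2$ (left) to $\overline a_4$ (right) is a downward jump for a concave flux, hence a rarefaction, not a shock; this does not affect the trace or the flushing picture, but the single-front description with speed $(f(a_2)-f(a_4))/(a_2-\overline a_4)$ is not what the solution looks like.
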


  \begin{proof} Define $\overline{a}_2, \overline a_3 $ such that $\overline a_2 < \overline a_3<\theta$ 
    and $f(a_2) = f(\overline{a}_2),\ f(a_3) = f(\overline{a}_3)$; see~\Cref{fig:ex-fabio-flux}.

    We aim to construct  the boundary data $k_i$, depending on   a new time  $t_4\in (t_1, t_2)$ 
    and on densities $a_4$ and $\overline{a}_4$  with
    \begin{equation}
      \label{eq:ex-fabio-ass2}
      \begin{split}
        a_1< \overline a_2 & <a_4<\overline a_3<\theta<a_3<\overline a_4<a_2 < \overline a_1,  \\  
        f(a_1) = f(\overline a_1) <f(a_2)=&f(\overline a_2)<f(a_4)=f(\overline a_4)<f(a_3)=f(\overline a_3),
      \end{split}
    \end{equation}   
    see \Cref{fig:ex-fabio-flux},  such that the flux at $x=0$ of the corresponding solutions $u_1,u_2$ is given by
    \begin{equation*}
      \label{eq:boundary-ex-fabio}
      f(u_1(t,0)) = f(u_2(t,0))=\left\{
        \begin{array}{l@{\qquad}l}
          f(a_2),
          & t \in (0, t_4),
          \\
          f(a_4),
          & t \in (t_4, t_6),
          \\
        f(a_1),
          & t \in (t_6, T),
        \end{array}
      \right.
    \end{equation*} 
    where $t_6$ is defined as 
    \begin{equation}
      \label{t_6}
      t_6  \doteq \displaystyle \frac{-(a_2 - a_1)\overline{x}_1 + t_4 (f(a_4) - f(a_2))}{f(a_4) - f(a_1)}.
    \end{equation} 
    
    More precisely the solution $u_1$ would enjoy the following properties
    \begin{enumerate}
    \item[1.1] Up to the time $t_1$, its flux trace is constantly $f(u_1(t,0))=f(\widetilde u(t,0))$. 
      In the interval $(t_1, t_4)$, it holds $f(u_1(t,0)) = f(a_2)$.
    \item[1.2] At time $t_4$  a rarefaction wave, connecting the state $a_2$ with $\overline a_4$ 
      (recall~\eqref{eq:ex-fabio-ass2}), enters $x<0$.
    \item[1.3] In the interval $(t_4, t_6)$ the flow $f(u_1(\cdot,0))$ is constantly equal to $f(\overline a_4)$,  
      where $t_6$ is the time at which the shock wave obtained by the interaction between the shock originated 
      at $\overline x_1$ and the rarefaction originated at $t_4$ crosses the axis $x=0$, 
      see \Cref{fig:ex-fabio-fig2}. The time $t_6$ can be explicitely computed with the formula~\eqref{t_6},
      by applying the Divergence Theorem to the vector field $(u_1, f(u_1))$ 
      in the triangular domain with vertexes $(t_6,0)$, $(0, -f'(a_1)t_6)$ and $(0,0)$.

      We claim that actually $t_3<t_6<T  $, where $t_3$ is defined in~\eqref{t3} and $t_6$ is defined in~\eqref{t_6}.

    \item[1.4] In the interval $(t_6, T) $ the flow $f(u_1(t,0))$ is constantly 
      equal to $f(\widetilde u(t,0))=f(a_1)$.
    \end{enumerate}
    On the other side,  the features  of the solution $u_2$  would be the following:
    \begin{enumerate}

    \item[2.1] Up to time $t_1$ its flux trace is constantly $f(u_2(t,0))= f(\widetilde u(t,0)) = f(a_2)$.

    \item[2.2] At time $t_1$ a shock $\xi$, connecting the states $\overline a_2, a_2$ and entering $x>0$, is originated. 

      We claim that this shock remains for all times in $x>0$, never crossing  the axis $x=0$. 
    \item[2.3]   In the interval  $(t_1,t_4)$, it is constantly $f(u_2(t,0))= f(\overline a_2)=f(a_2)$.
    
    \item[2.4] In the interval $(t_4, t_6) $ the flow $f(u_2(t,0))$ is constantly equal to $f(a_4)$,  
      where $t_6 $ is defined in~\eqref{t_6}. 

    \item[2.5] In the interval $(t_6, T) $ the flow $f(u_2(t,0))$ is constantly equal to $f(\widetilde u(t,0))=f(a_1)$.
    \end{enumerate}
    
  Such   solutions $u_1,u_2$ are  represented in \Cref{fig:ex-fabio-fig2}.
  
  Observe that for these solutions we have that 
  $\gamma(t)=f(u_1(t,0))=f(u_2(t,0))$, and 
  \[
    \max_{t \in [0,T]}\gamma(t)=f(a_4)<f(a_3)=\max_{t \in [0,T]} \widetilde \gamma(t).
  \] 
  Moreover $u_1(t,0),u_2(t,0)$  coincide with $\widetilde u(t,0)$ on  $(0, t_1)\cup (t_6, T)$ 
  and $u_1(T,x)=\widetilde u(T,x)$ for a.e. $x<0$.  Therefore, due to Lemma \ref{uniq}, $\gamma(t)$ is a solution of the maximation 
  problem~\eqref{max-J}.
Moreover  $\mathcal{F}_{[0,T]} (\gamma)< \mathcal{F}_{[0,T]} (\widetilde \gamma)$. More explicitly  
  \[
    \mathcal{F}_{[0,T]} (\gamma) = \abs{f(a_2) - f(a_4)} + \abs{f(a_4) - f(a_1)}
    =  2f(a_4)-f(a_1) -f(a_2),
  \] 
  and hence recalling~\eqref{eq:F-entropy-exFabio}
  \[
    \mathcal{F}_{[0,T]} (\gamma)
    -\mathcal{F}_{[0,T]} (\widetilde \gamma) = 2f(a_4)-2f(a_3) <0.
  \]

  Therefore to conclude the proof it is sufficient to show that we can actually 
  choose $t_4, a_4$, and $\overline a_4$ so that  properties 1.1-1.4, 2.1-2.5 above are verified and in particular
  \begin{eqnarray}
    \label{claimfabio1}&t_3< t_6<T &
    \\ \label{claimfabio2}&
    \text{ the shock $\xi$ originated at $t_1$ and entering $x>0$  remains for all times in $x>0$. } &
  \end{eqnarray}    

    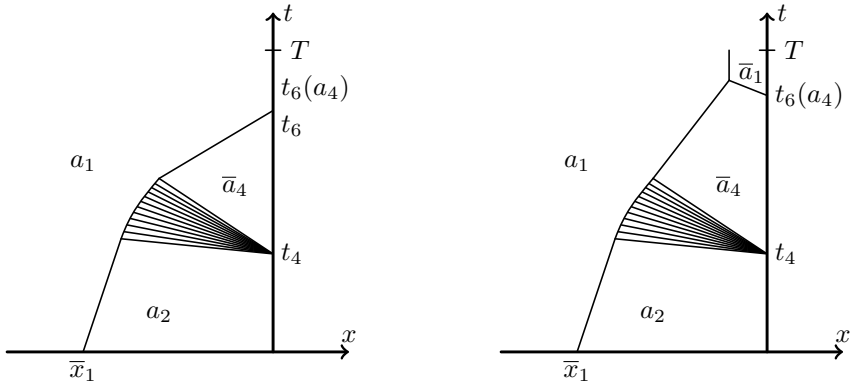
\begin{figure}
      \centering
      \begin{tikzpicture}[line cap=round,line join=round,x=1.cm,y=1.cm]
    
        \newcommand*{\incoming}{
          \draw[->, line width=1.1pt, name path=vertical] (6.5, .5) -- (6.5, 5.) node[right]{$t$};
          \draw[->, line width=1.1pt] (3., .5) -- (7.5, .5) node[above]{$x$};

          \node[inner sep=0, anchor=north] at (4., 0.4) {$\overline{x}_1$};
          
          \draw[line width=.6pt] (4., 0.5) -- (4.5, 2) to [out=70, in=230] (5., 2.8); 
          \draw[line width=.6pt, name path=rarefaction, draw=none] (9, 0.5) -- (4.5, 2) ;

          \path[name intersections={of=vertical and rarefaction, by=t1}];

          \draw[line width=.6pt] (6.5, 1.8) -- (4.5, 2);
          \draw[line width=.6pt] (6.5, 1.8) -- (5., 2.8);

          \node[inner sep=0] at (4, 3) {$a_1$};
          \node[inner sep=0] at (5, 1.) {$a_2$};
          \node[inner sep=0] at (6, 2.7) {$\overline a_4$};

          \begin{scope}
            \clip (6.5, 1.8) -- (4.5, 2) to [out=70, in=230] (5., 2.8) -- (6.5, 1.8);
            \draw[line width=.6pt] (6.5, 1.8) -- (4.5, 2.1);
            \draw[line width=.6pt] (6.5, 1.8) -- (4.5, 2.2);        
            \draw[line width=.6pt] (6.5, 1.8) -- (4.5, 2.3);
            \draw[line width=.6pt] (6.5, 1.8) -- (4.5, 2.4);
            \draw[line width=.6pt] (6.5, 1.8) -- (4.5, 2.5);
            \draw[line width=.6pt] (6.5, 1.8) -- (4.5, 2.6);
            \draw[line width=.6pt] (6.5, 1.8) -- (4.5, 2.7);
            \draw[line width=.6pt] (6.5, 1.8) -- (4.5, 2.8);
            \draw[line width=.6pt] (6.5, 1.8) -- (4.5, 2.9);
            \draw[line width=.6pt] (6.5, 1.8) -- (4.5, 3.);
          \end{scope}

          \node[inner sep=0, anchor=west] at (6.6, 1.8) {$t_4$};

          \draw[line width=.6pt] (6.4, 4.5) -- (6.6, 4.5) node[right]{$T$};
        }

        \begin{scope}[xshift=-2cm]
          \incoming
          \draw[line width=.6pt] (5., 2.8) -- (6.5, 3.7);
          \node[inner sep=0, anchor=west] at (6.6, 4.) {$t_6(a_4)$};
          \node[inner sep=0, anchor=west] at (6.6, 3.5) {$t_6$};

        \end{scope}

        \begin{scope}[xshift=4.5cm]
          \incoming
          \draw[line width=.6pt] (5., 2.8) -- (6., 4.1);
          \draw[line width=.6pt] (6.5, 3.9) -- (6., 4.1) -- (6., 4.5);
          \node[inner sep=0, anchor=west] at (6.6, 3.9) {$t_6(a_4)$};
          \node[inner sep=0] at (6.3, 4.2) {$\overline a_1$};

        \end{scope}

      \end{tikzpicture}
      \caption{Two possible configurations for the solution $u_1$ of \Cref{ex:Fabio}, 
        constructed with boundary control $f(k_1(t))$,
        where the boundary datum $k_1$ is given in~\eqref{eqbdry1-new}. At left the case
        $t_6(a_4) \ge t_6$. At right the case $t_6(a_4) < t_6$. 
        Here $t_6(a_4)$ is defined in~\eqref{prova11}, while $t_6$ is defined in~\eqref{t_6}.}
      \label{fig:ex-fabio-fig2-u1}
    \end{figure}

    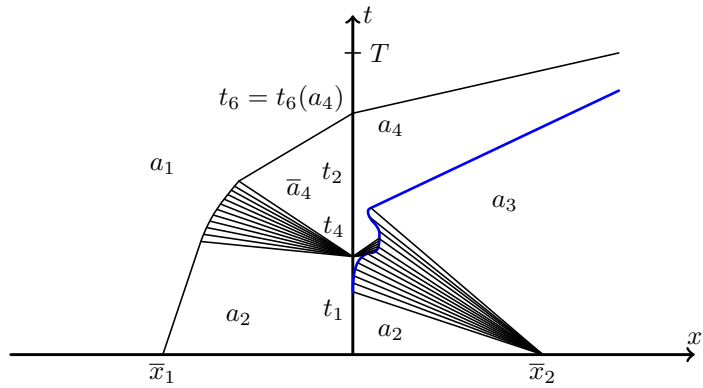
\begin{figure}
      \centering
      \begin{tikzpicture}[line cap=round,line join=round,x=1.cm,y=1.cm]
    
        \draw[->, line width=1.1pt, name path=vertical] (6.5, .5) -- (6.5, 5.) node[right]{$t$};
        \draw[->, line width=1.1pt] (2., .5) -- (11., .5) node[above]{$x$};

        \node[inner sep=0, anchor=north] at (4., 0.4) {$\overline{x}_1$};
        \node[inner sep=0, anchor=north] at (9, 0.4) {$\overline{x}_2$};

        \draw[line width=.6pt] (4., 0.5) -- (4.5, 2) to [out=70, in=230] (5., 2.8) -- (6.5, 3.7) -- (10., 4.5);
        \draw[line width=.6pt, name path=rarefaction, draw=none] (9, 0.5) -- (4.5, 2) ;

        \path[name intersections={of=vertical and rarefaction, by=t1}];

        \draw[line width=.6pt, draw=none, name path=raradue] (9, 0.5) -- (4., 4.8);
        \path[name intersections={of=vertical and raradue, by=t2}];
        \node[anchor=south east] at (t2) {$t_2$};

        \draw[line width=1.pt, color=blue] (t1) to [out=90, in=185] (6.75, 1.85) to [out=5, in=-90] (6.85,2.1) 
        to [out=90, in=-90] (6.7, 2.4) to [out=90, in=200] (6.75, 2.45) to (10, 4.);

        \begin{scope}
          \clip (t1) to [out=90, in=185] (6.75, 1.85) to [out=5, in=-90] (6.85,2.1) 
          to [out=90, in=-90] (6.7, 2.4) to [out=90, in=200] (6.75, 2.45) to (10, 4.) -- (10.5, 0.5) -- (6.5, 0.5) -- cycle;
          \draw[line width=.6pt] (9, 0.5) -- (4., 2.4) ;
          \draw[line width=.6pt] (9, 0.5) -- (4., 2.6) ;
          \draw[line width=.6pt] (9, 0.5) -- (4., 2.8) ;
          \draw[line width=.6pt] (9, 0.5) -- (4., 3.) ;
          \draw[line width=.6pt] (9, 0.5) -- (4., 3.2) ;
          \draw[line width=.6pt] (9, 0.5) -- (4., 3.4) ;
          \draw[line width=.6pt] (9, 0.5) -- (4., 3.6) ;
          \draw[line width=.6pt] (9, 0.5) -- (4., 3.8) ;
          \draw[line width=.6pt] (9, 0.5) -- (5.5, 3.);
          \draw[line width=.6pt] (9, 0.5) -- (4.5, 2) ;
          \draw[line width=.6pt] (9, 0.5) -- (4., 4.3) ;
          \draw[line width=.6pt] (9, 0.5) -- (4., 4.5) ;
          \draw[line width=.6pt] (9, 0.5) -- (4., 4.8) ;
        \end{scope}

        \begin{scope}
          \clip (t1) to [out=90, in=185] (6.75, 1.85) to [out=5, in=-90] (6.85,2.1) 
          to [out=90, in=-90] (6.7, 2.4) to [out=90, in=200] (6.75, 2.45) to (10, 4.) -- (6.5, 4.4) -- cycle;
          \draw[line width=.6pt] (6.5, 1.8) -- (8.5, 2.);
          \draw[line width=.6pt] (6.5, 1.8) -- (8.5, 2.2);
          \draw[line width=.6pt] (6.5, 1.8) -- (8.5, 2.4);
          \draw[line width=.6pt] (6.5, 1.8) -- (8.5, 2.6);
          \draw[line width=.6pt] (6.5, 1.8) -- (8.5, 2.8);
          \draw[line width=.6pt] (6.5, 1.8) -- (8.5, 3.);
          \draw[line width=.6pt] (6.5, 1.8) -- (8.5, 3.2);
        \end{scope}

        \draw[line width=.6pt] (6.5, 1.8) -- (4.5, 2);
        \draw[line width=.6pt] (6.5, 1.8) -- (5., 2.8);

        \node[inner sep=0] at (4, 3) {$a_1$};
        \node[inner sep=0] at (5, 1.) {$a_2$};
        \node[inner sep=0] at (7, .8) {$a_2$};
        \node[inner sep=0] at (8.5, 2.5) {$a_3$};
        \node[inner sep=0] at (5.8, 2.7) {$\overline a_4$};
        \node[inner sep=0] at (7, 3.5) {$a_4$};

        \begin{scope}
          \clip (6.5, 1.8) -- (4.5, 2) to [out=70, in=230] (5., 2.8) -- (6.5, 1.8);
          \draw[line width=.6pt] (6.5, 1.8) -- (4.5, 2.1);
          \draw[line width=.6pt] (6.5, 1.8) -- (4.5, 2.2);        
          \draw[line width=.6pt] (6.5, 1.8) -- (4.5, 2.3);
          \draw[line width=.6pt] (6.5, 1.8) -- (4.5, 2.4);
          \draw[line width=.6pt] (6.5, 1.8) -- (4.5, 2.5);
          \draw[line width=.6pt] (6.5, 1.8) -- (4.5, 2.6);
          \draw[line width=.6pt] (6.5, 1.8) -- (4.5, 2.7);
          \draw[line width=.6pt] (6.5, 1.8) -- (4.5, 2.8);
          \draw[line width=.6pt] (6.5, 1.8) -- (4.5, 2.9);
          \draw[line width=.6pt] (6.5, 1.8) -- (4.5, 3.);
        \end{scope}

        \node[inner sep=0, anchor=east] at (6.4, 1.1) {$t_1$};
        \node[inner sep=0, anchor=east] at (6.4, 2.2) {$t_4$};
        \node[inner sep=0, anchor=east] at (6.4, 3.9) {$t_6 = t_6(a_4)$};

        \draw[line width=.6pt] (6.4, 4.5) -- (6.6, 4.5) node[right]{$T$};

      \end{tikzpicture}
      \caption{The solution $u$ of \Cref{ex:Fabio}, constructed with boundary control $  f(k_1(t)) = f(k_2(t))$,
        where the boundary data $k_1$ and $k_2$ are given in~\eqref{eqbdry1-new}.}
      \label{fig:ex-fabio-fig2}
    \end{figure}

  Recalling that $t \mapsto f(\widetilde u(t,0))$ is continuously increasing for $t\in (t_1, t_2)$ between $f(a_2)$ and $f(a_3)$ and that $f(\widetilde u(t,0))\equiv f(a_3)$ for $t\in (t_2,t_3)$, we have that there exists  a unique $t_4\in (t_1,t_2)$ such that 
  \begin{equation}\label{tetto}
    \int_{t_1}^{t_3}f(\widetilde u(t,0))dt = (t_4-t_1)f(a_2)+(t_3-t_4)f(a_3).
  \end{equation}
  Note that for  $a\in (\overline a_2, \overline a_3)$ it holds that 
  \[
    \int_{t_1}^{t_3}f(\widetilde u(t,0))dt =(t_4-t_1)f(a_2)+(t_3-t_4)f(a_3)> (t_4-t_1)f(a_2)+ (t_3-t_4)f(a).
  \]
  For every 
  $a\in (\overline a_2, \overline a_3)$ 
  let us define
  $t_6(a)>t_3$ such that
  \begin{equation}\label{prova11}
    \int_{t_1}^{t_6(a)}
    f(\widetilde u(t, 0))dt =(t_4-t_1)f(a_2)+ (t_6(a)-t_4) f(a)
  \end{equation}
  see \Cref{fig7}.
  We observe that 
  \begin{equation*}
      \lim_{a\to \overline a_3}t_6(a)=t_3,
  \end{equation*}
  and so we   choose $a=a_4$ so to have 
  $t_6(a_4)\in (t_3, T)$.
  \begin{figure}
    \centering 

    \begin{tikzpicture}[scale=1.5] 
      \draw[->] (0,0) -- (7,0) node[right] {$t$};
      \draw[->] (0,0) -- (0,4) node[right] {$f$};
      
      \node[left] at (0,1) {\small{$f(a_1)$}};
      \node[left] at (0,2) {\small{$f(a_2) = f(\overline a_2)$}};
      \node[left] at (0,3.1) {\small{$f(a_4)=f(\overline a_4)$}};
      \node[left] at (0,3.5) {\small{$f(a_3)=f(\overline a_3)$}};
      
      \node[below] at (1,0) {$t_1$};
      \node[below] at (2.1,0) {$t_4$};
      \node[below] at (4,0) {$t_2$};
      \node[below] at (5,0) {$t_3$};
        \node[below] at (5.5,0) {$t_6(a_4)$};
      \node[below] at (6,0) {$T$};
      
      \draw[blue, thick] (1,2) -- (2.1,2);
      \draw[blue, thick] (2.2,3.25) -- (5.2,3.25);
      \draw[blue, thick] (5.2,1.05) -- (6,1.05);
      \draw[red, thick] (1,1.97) -- (2.1,1.97);
      \draw[red, thick] (2.1,3.4) -- (4,3.4);

      \draw[thick] (1,2) to[out=30,in=180] (4,3.38);
      \draw[thick] (4,3.38) -- (5,3.38);
      \draw[thick] (5,1) -- (6,1);
      
      \draw[dashed] (1,0) -- (1,2);
      \draw[red, dashed] (2.1,0) -- (2.1,3.4);
      \draw[dashed] (4,0) -- (4,3.5);
      \draw[dashed] (5,0) -- (5,3.5);
     \draw[dashed] (5.2,0) -- (5.2,3.3);
      \draw[dashed] (6,0) -- (6,1);

      \draw[dashed] (0,1) -- (6,1);
      \draw[dashed] (0,2) -- (1.9,2);
      \draw[dashed] (0,3.25) -- (2.5,3.25);
      \draw[dashed] (0,3.38) -- (5,3.38);

      \node at (1.8,2.20) {\small{\color{red}{$B_1$}}};
      \node at (2.5,3.1) {\small{\color{red}{{$B_2$}}}};

      \node at (1.5,2.18) {\small{\color{blue}{$A_1$}}};
      \node at (2.3,3) {\small{\color{blue}{$A_2$}}};
      \node at (4,3.35) {\small{\color{blue}{$A_3$}}};
      \node at (5.1,2) {\small{\color{blue}{$A_4$}}};
      \draw[black, thick] (0.5,-1) -- (2,-1) node[right] {$f(\widetilde{u}(t,0))$};
      \draw[blue, thick] (4,-1) -- (5.5,-1) node[right] {$f(k_1(t))=f(k_2(t))$};
    \end{tikzpicture}
    \caption{The construction related to the conditions~\eqref{tetto} and~\eqref{prova11}. 
      The time $t_4$ is such that the area $B_1$ is equal to the area $B_2$. 
      The value $f(a_4)$ and $t_6(a_4)$ are such that the area of $A_1+A_3$ 
      is equal to the area of $A_2+A_4$. }
    \label{fig7}
  \end{figure}
  Then define
\begin{equation}
    \label{eqbdry1-new}
    k_1(t) = \left\{
      \begin{array}{l@{\qquad}l}
        a_2,
        & t \in (0, t_4\,),
        \\
        \overline{a}_4,
        & t \in (t_4,
        t_6(a_4)),
        \\
       \overline a_1,
        & t \in (t_6(a_4), T),
      \end{array}
    \right.
    \qquad
    k_2(t) = \left\{
      \begin{array}{l@{\qquad}l}
         a_2,
        & t \in (0, t_1),\\
        \overline a_2,
        & t \in (t_1,   t_4\,),
        \\
         a_4,
        & t \in (  t_4, t_6(a_4)),
        \\
        a_1,
        & t \in (t_6(a_4), T).
      \end{array}
    \right.
  \end{equation} 
  Note that the solution $u_1$ has two possibile configurations, according to the fact that
  $t_6(a_4) \ge t_6$ or $t_6(a_4) < t_6$, 
  where $t_6(a_4)$ is defined in~\eqref{prova11} while $t_6$ in~\eqref{t_6}; 
  see \Cref{fig:ex-fabio-fig2-u1}. 
  By construction we have
  \begin{equation}
  \label{prop-t4a}
      \int_0^{t}
    f(\widetilde u(t,0))dt = 
    \int_0^{t}
    f(k_2(t))dt=
    \int_0^{t}
    f(k_1(t))dt
    \qquad \forall~t\in (0,t_1].
  \end{equation}
  and
  \begin{equation}
  \label{prop-t4ab}
      \int_0^{t}
    f(\widetilde u(t,0))dt > 
    \int_0^{t}
    f(k_2(t))dt=
    \int_0^{t}
    f(k_1(t))dt
    \qquad \forall~t\in (t_1,t_6(a_4)).
  \end{equation}

  We prove that $t_6(a_4) = t_6$ and~\eqref{claimfabio1}. Since $t_6(a_4) > t_3$ and using~\eqref{prova11} and the Divergence Theorem applied to the vector
  field $(\widetilde u, f(\widetilde u))$ on $(0, t_6(a_4)) \times (\overline x_1, 0)$, we 
  obtain
  \begin{align*}
    t_4 f(a_2) + (t_6(a_4) - t_4) f(a_4)
     = \int_{0}^{t_6(a_4)} f(\widetilde{u}(t, 0)) dt
    = t_6(a_4) f(a_1) + \overline{x}_1 (a_1 - a_2) 
  \end{align*}
  we directly obtain that $t_6(a_4) = t_6$,   
where $t_6$ is defined in~\eqref{t_6}, and then ~\eqref{claimfabio1},.
  Hence the structure of $u_1$ is that given in \Cref{fig:ex-fabio-fig2-u1}, left.

  We prove~\eqref{claimfabio2}.  
  Assume by contradiction that the shock generated at $t_1$ and entering $x>0$ hits again $x=0$ at a time $t^*>t_1$. 
  
If $t^* \le t_3$, then $u_2(t^*, x) = \widetilde u(t^*, x)$ for a.e. $x>0$.
  By \Cref{uniq}, we get that
  \[
    \int_{0}^{t^*}f(\widetilde u( t,0))dt = \int_{0}^{t^*}f( u_2( t,0))dt.
  \] 
  This condition, together with  \eqref{prova11}, \eqref{prop-t4a} and the fact that $f(u_2(t,0)) = f(k_2(t))$ 
  for $t\in (0, t^*)$ implies a contradiction.

  If $t^* > t_3$, then necessarily $u_2(t^*, 0)= a_3$. Moreover
  either $\lim_{t\to (t^*)^-} f(u_2(t, 0))=f(a_4)<f(a_3)$ or $\lim_{t\to (t^*)^-} f(u_2(t, 0))=f(a_1)<f(a_3)$.
  This condition implies that  the slope of the shock arriving in $t^*$ is positive, giving a contradiction.

We conclude with the following observation: the function $a \to t_6(a)$ is decreasing (see \Cref{fig7}) and so in particular there exists a 
value 
$\overline a_4\in (\overline a_2, \overline a_3)$ such that $t_6(\overline a_4)=T$. Note that $f(\overline a_4)$ is the minimum value among all possible values $f(a_4)$ associated with the previous construction. The   associated inflow admissible control $
\overline\gamma$ 
 satisfies the same properties as before, in particular it is a maximizer of \eqref{max-J},  and we have
\[
    \mathcal{F}_{[0,T]} (\overline\gamma) = \abs{f(a_2) - f(\overline a_4)} + \abs{f(\overline a_4) - f(a_1)}
    =  2f(\overline a_4)-f(a_1) -f(a_2).
  \] Note that 
  $\mathcal{F}_{[0,T]} (\overline\gamma)$ is the minimum  value achieved by $\mathcal{F}_{[0,T]} (\gamma)$ among all inflow admissible controls $\gamma$ associated to non entropic solutions 
  constructed with 
  boundary data of the type in~\eqref{eqbdry1-new}.
  
\end{proof}
\end{example}

\section{The entropy solution solves the min-max problem \texorpdfstring{\eqref{maxmin}}{} in the monotone case}
\label{entrsolvminmax}
In this Section we provide the main result of the paper, showing that if the initial datum $\overline{u}$ is 
 monotone, then the entropy weak solution to \eqref{eq:CP-classic} is a solution to the max-min problem \eqref{maxmin}. 
 In particular, no procedures as the ones used in previous Section \ref{sectionesempi} can be implemented.

We start with  two preliminary lemmata based on the Divergence Theorem.

\begin{lemma}
  \label{lemmamon} 
  \begin{enumerate}
    \item Assume $\widetilde u$ satisfies
    \begin{equation*}
      \widetilde u(T,x)\leq \theta\qquad \text{for a.e.  } x\in (-\infty, 0).
    \end{equation*}  
    Let    $u$ be a solution to the maximization problem \eqref{max-J}, associated to an admissible  
    boundary inflow control $\gamma$. Then $u(T, 0^-; \gamma)=\widetilde u(T, 0^-)$.

    \item Assume $\widetilde u$ satisfies
    \begin{equation*}
      \widetilde u(T,x)\geq \theta,\qquad \text{for a.e.  } x\in (0, +\infty).
    \end{equation*} 
      Let    $u$ be a solution to the maximization problem \eqref{max-J}, associated to an admissible  boundary 
      inflow control $\gamma$. Then $u(T, 0^+; \gamma)=\widetilde u(T, 0^+)$.  
\end{enumerate}
 
\end{lemma}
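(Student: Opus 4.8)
The plan is to prove statement 1 in full and then obtain statement 2 by the symmetric argument on the outgoing edge $(0,+\infty)$, where the threshold conditions relative to $\theta$ are reversed. Throughout, write $u_1(t,x)=u(t,x;\gamma)$ for $x<0$, so that $u_1$ is an entropy admissible weak solution of~\eqref{bdry-incoming} whose flux trace equals $\gamma$.

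First I would reproduce the Divergence Theorem computation already used in the proof of \Cref{teo:classical-sol}. Fix $L$ larger than the sup-norms of $f'(u_1)$ and $f'(\widetilde u)$ on $[0,T]\times(-\infty,0)$; by finite speed of propagation the traces at $x=-LT$ are determined by $\overline u$ alone, so $u_1(t,-LT)=\widetilde u(t,-LT)$ for a.e.\ $t$. Applying the Divergence Theorem to $(u_1,f(u_1))$ and to $(\widetilde u,f(\widetilde u))$ on the rectangle $(0,T)\times(-LT,0)$ and subtracting, the contributions of $\overline u$ and of the lateral trace at $x=-LT$ cancel, leaving
\begin{equation*}
  \int_0^T f(u_1(t,0))\,dt - \int_0^T f(\widetilde u(t,0))\,dt
  = \int_{-LT}^0 \big(\widetilde u(T,x) - u_1(T,x)\big)\,dx.
\end{equation*}
Since $u$ solves~\eqref{max-J} and, by \Cref{teo:classical-sol}, so does $\widetilde u$, the left-hand side vanishes; hence $\int_{-LT}^0 u_1(T,x)\,dx = \int_{-LT}^0 \widetilde u(T,x)\,dx$.

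Next I would establish the pointwise comparison $u_1(T,x)\ge\widetilde u(T,x)$ for a.e.\ $x\in(-LT,0)$, which is exactly where the hypothesis $\widetilde u(T,\cdot)\le\theta$ enters. Following the characteristic analysis in the proof of \Cref{teo:classical-sol}, I split $(-LT,0)$ at the threshold point $\bar x$: for a.e.\ $x<\bar x$ the backward characteristics of $u_1$ through $(T,x)$ reach the initial line without meeting $x=0$, so $u_1(T,x)=\widetilde u(T,x)$; for a.e.\ $x\in(\bar x,0)$ every backward characteristic of $u_1$ must instead cross $x=0$ with strictly negative speed, forcing $u_1(T,x)>\theta$. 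Invoking the hypothesis $\widetilde u(T,x)\le\theta$ then gives $u_1(T,x)>\theta\ge\widetilde u(T,x)$ on $(\bar x,0)$. Thus $u_1(T,\cdot)\ge\widetilde u(T,\cdot)$ a.e.\ on $(-LT,0)$, and combined with the equality of integrals just obtained this forces $u_1(T,x)=\widetilde u(T,x)$ for a.e.\ $x\in(-LT,0)$. Since both profiles lie in $\BV$ in the space variable and therefore admit one-sided limits at $x=0$, their left traces coincide, i.e.\ $u(T,0^-;\gamma)=\widetilde u(T,0^-)$.

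The main obstacle is precisely this pointwise ordering of the two terminal profiles near the node. In the general (non-monotone) setting the comparison is delicate, since on the boundary-influenced set $(\bar x,0)$ one would otherwise have to compare the slopes of the backward characteristics of $u_1$ and $\widetilde u$; the hypothesis $\widetilde u(T,\cdot)\le\theta$ is what trivializes it, placing $\widetilde u(T,\cdot)$ on the branch $f'\ge 0$ while the boundary-fed part of $u_1$ necessarily lies on the branch $f'<0$, i.e.\ above $\theta$. For statement 2 the same scheme applies on $(0,LT)$ with all inequalities reversed: on the set where $u_2$ is fed from the boundary one has $f'(u_2)>0$, hence $u_2(T,x)<\theta\le\widetilde u(T,x)$ by the hypothesis $\widetilde u(T,\cdot)\ge\theta$, giving $u_2(T,\cdot)\le\widetilde u(T,\cdot)$ a.e.; equality of the corresponding integrals again yields $u_2(T,\cdot)=\widetilde u(T,\cdot)$ a.e.\ and therefore $u(T,0^+;\gamma)=\widetilde u(T,0^+)$.
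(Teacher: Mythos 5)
Your proposal is correct and follows essentially the same route as the paper's proof: both rest on the Divergence Theorem identity over $(0,T)\times(-LT,0)$ together with the characteristic argument showing that the boundary-influenced part of $u_1(T,\cdot)$ lies strictly above $\theta$, hence above $\widetilde u(T,\cdot)$ by hypothesis. The only difference is organizational — the paper runs the argument as a contradiction (a point of disagreement would force a strictly smaller flux integral, contradicting maximality), whereas you derive the equality of the terminal integrals first and combine it with the pointwise ordering; these are the same proof rearranged.
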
 

\begin{proof} 
  We prove only case 1. since case 2. is completely analogous. 

  Let $u(t,x;\gamma)$ be  a solution to the maximization problem \eqref{max-J} associated with $\gamma$. 
   Since $\gamma$ is fixed, from now on we drop the dependence on $\gamma$, and we denote the solution as $u(t,x)$. 
  
  Let $L$ be such that $L> \max\{ \|f'(u)\|_{\LL\infty([0,T]\times (-\infty, 0))}, 
  \|f'(\widetilde u)\|_{\LL\infty([0,T]\times (-\infty, 0))}\}$. Then arguing exactly as in the proof of 
  \Cref{uniq}  we have that $f(u(t, -LT^+)) = f(\widetilde u(t, -LT^+))$ for a.e. $t\in [0,T]$. 
  
%

  Now we claim that 
  $u(T,x)=\widetilde u(T,x)$ for $x\in ( -LT,0)$.  
  Assume by contradiction that  there exists a continuity point $\overline{x}\in (-LT,0)$ for $u, \widetilde u$ such  
  that $u(T,\overline{x})\neq \widetilde u(T,\overline{x})$. 
  Since the initial datum for $u$ and $\widetilde u$ is the same, 
  there exists $\overline{t} \in (0,T)$ such that the 
  points $u(T, \overline{x})$ and $u(\overline{t},0)$ share the same generalized characteristic line. 
  Since the generalized characteristic lines cannot cross, for every $t\in [\overline{t}, T]$, the characteristic 
  passing through $(t,0)$ has negative speed. Therefore 
  for every $t\in [\overline{t}, T]$, we have $u(t,0)>\theta$, 
  and then also  $u(T,x)>\theta\geq \widetilde u(T,x)$ for every $x\in[\overline{x}, 0]$. 

  On the basis of the previous argument,  we define 
  \begin{equation*}
    \hat x:= \inf\{x\in [-LT,0]\ :\ u(T,x)\neq \widetilde u(T,x)\}= \inf\{x\in [-LT,0]\ :\ u(T,x)> \widetilde u(T,x)\}.
  \end{equation*}
  Therefore, by the previous argument we get that $u(T,x)>\theta\geq \widetilde u(T,x)$ for every $x\in (\hat{x}, 0]$. 
  Using the Divergence Theorem, 
  we get
  \begin{align*}
    \int_0^T f(u(t,0))dt
    & = \int_0^T f(u(t, -LT))dt + \int_{-LT}^0 u(0,x)dx -\int_{-LT}^0 u(T,x)dx
    \\
    & = \int_0^T f(\widetilde u(t, -LT))dt + \int_{-LT}^0 \overline u_1(x)dx -\int_{-LT}^0 u(T,x)dx
    \\
    & < \int_0^T f(\widetilde u(t, -LT))dt + \int_{-LT}^0 \overline u_1(x)dx -\int_{-LT}^0 \widetilde u(T,x)dx
    \\
    & = \int_0^T f(\widetilde u(t,0))dt, 
  \end{align*}
  that contradicts the fact that $\widetilde u$ is a solution to \eqref{max-J}.
  Therefore, we get that $u(T,x)=\widetilde u(T,x)$ for a.e. $x\in ( -\infty,0)$. 
  \end{proof} 

\begin{lemma}
  \label{lemmararefaction} 
  Assume $\widetilde u(T, \cdot)$ is monotone non-increasing.  
  Let    $u$ be a solution to the maximization problem \eqref{max-J}, associated to an admissible
  boundary inflow control $\gamma$. 
  Then $\widetilde{u}(T, \cdot)$ is continuous. Moreover,
  \begin{enumerate}
    \item if $\widetilde u(T, 0)\geq \theta $ we have  $u(T, 0^+; \gamma)=\widetilde u(T, 0)$;
    \item  if $\widetilde u(T, 0)\leq \theta $ we have  $u(T, 0^-; \gamma)=\widetilde u(T, 0)$.
  \end{enumerate} 
\end{lemma}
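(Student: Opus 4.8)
The plan is to settle the continuity statement at once and then to identify the trace by \emph{sandwiching} $u(T,\cdot\,)$ between $\widetilde u(T,\cdot\,)$ from two opposite sides on a subinterval adjacent to the junction. Continuity is immediate: at every $x$ the Lax condition in \Cref{def-ent-sol-ibvp} gives $\widetilde u(T,x^-)\le\widetilde u(T,x^+)$, while the assumed monotonicity gives $\widetilde u(T,x^-)\ge\widetilde u(T,x^+)$, so $\widetilde u(T,\cdot\,)$ has no jumps. I would then prove only case~1 ($\widetilde u(T,0)\ge\theta$), since case~2 is its mirror image, obtained by exchanging the two edges and reversing the relevant inequalities. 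If $\widetilde u(T,x)\ge\theta$ for a.e.\ $x>0$ the assertion is exactly \Cref{lemmamon}, case~2, so I may assume the continuous non-increasing profile crosses $\theta$ at a point $x^*\in(0,+\infty)$, with $\widetilde u(T,x)\ge\theta$ on $(0,x^*)$.

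The geometric cornerstone is that the minimal backward characteristic of $\widetilde u$ issued from $(T,x^*)$ is, by the continuity just proved and by Dafermos' theory, a genuine straight characteristic of slope $f'(\theta)=0$; hence it is the vertical segment $\{x=x^*\}$, and $\widetilde u(t,x^*)=\theta$, i.e.\ $f(\widetilde u(t,x^*))=f(\theta)$, for a.e.\ $t\in(0,T)$. I would then apply the Divergence Theorem to $(\widetilde u,f(\widetilde u))$ and to $(u,f(u))$ on the rectangle $(0,T)\times(0,x^*)$ and subtract. Using that both $\widetilde\gamma$ and $\gamma$ maximize \eqref{max-J}, so that $\int_0^T f(u(t,0^+))\,dt=\int_0^T f(\widetilde u(t,0^+))\,dt$ by \Cref{teo:classical-sol} together with the Kirchhoff condition, and using $f(u(t,x^*))\le f(\theta)=f(\widetilde u(t,x^*))$, I obtain
\[
\int_0^{x^*}\big(u(T,x)-\widetilde u(T,x)\big)\,dx=\int_0^T\big(f(\theta)-f(u(t,x^*))\big)\,dt\ \ge\ 0.
\]

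The complementary inequality comes from generalized characteristics of the maximizer. For a continuity point $\bar x\in(0,x^*)$ with $u(T,\bar x)\neq\widetilde u(T,\bar x)$, the backward characteristic of $u$ from $(T,\bar x)$ cannot reach $t=0$ within $\{x>0\}$ (otherwise $u(T,\bar x)$ would be determined by the common datum $\overline u_2$ and would equal $\widetilde u(T,\bar x)$, as in the argument of \Cref{uniq} and \Cref{teo:classical-sol}); hence it meets the junction $x=0$ at some $\bar t\in(0,T)$ along a segment of necessarily positive speed, so that $f'(u(T,\bar x))>0$ and thus $u(T,\bar x)<\theta\le\widetilde u(T,\bar x)$. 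This shows $u(T,\cdot\,)\le\widetilde u(T,\cdot\,)$ a.e.\ on $(0,x^*)$, whose integral form combined with the displayed inequality forces $u(T,\cdot\,)=\widetilde u(T,\cdot\,)$ a.e.\ on $(0,x^*)$; passing to the one-sided limit at $0^+$ (both traces existing by the $\mathbf{BV}$ bounds) yields $u(T,0^+;\gamma)=\widetilde u(T,0^+)=\widetilde u(T,0)$, as claimed.

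I expect the main obstacle to be the characteristic comparison of the third paragraph, rather than the divergence estimate. Two points need care: first, justifying that a backward characteristic of the \emph{initial--boundary value} maximizer which reaches $t=0$ inside $\{x>0\}$ indeed propagates the common datum $\overline u_2$ and therefore matches $\widetilde u$, mirroring the reasoning used in \Cref{lemmamon}; and second, the degenerate configuration $\widetilde u(T,0)=\theta$, where $x^*$ may collapse to $0$ and the interval $(0,x^*)$ degenerates, which I would treat as a limiting case of the strict one. By contrast, the verticality of the $\theta$-characteristic and the resulting divergence inequality are robust, since they rely only on $f'(\theta)=0$, on the continuity of $\widetilde u(T,\cdot\,)$, and on $f(\theta)$ being the global maximum of the flux.
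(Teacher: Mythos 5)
Your argument is essentially sound but follows a genuinely different route from the paper's. The paper proves case~1 by contradiction: assuming $u(T,0^+;\gamma)\neq\widetilde u(T,0)$ it extracts the first time $\tau$ at which the boundary trace of $u$ drops to or below $\theta$, follows the forward generalized characteristic from $(\tau,0)$ to a point $\hat x$ at time $T$, shows $u(T,\cdot)\le\widetilde u(T,\cdot)$ with strict integral inequality on $(0,\hat x)$ and agreement beyond $\hat x$, and then contradicts maximality via the divergence theorem on a large interval $(0,LT)$. You instead work directly on the fixed interval $(0,x^*)$ bounded on the right by the vertical genuine characteristic through the crossing point $\widetilde u(T,x^*)=\theta$, where $f(\widetilde u(\cdot\,,x^*))\equiv f(\theta)$ dominates any competitor's flux; this converts maximality into the inequality $\int_0^{x^*}\big(u(T,x)-\widetilde u(T,x)\big)\,dx\ge 0$, which you squeeze against the pointwise bound $u\le\widetilde u$ coming from the dichotomy on backward characteristics. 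This is arguably cleaner (no forward characteristic construction, no far-field matching), and your continuity argument (Lax condition against monotonicity) is more elementary than the paper's appeal to Ole\v{\i}nik estimates. The domain-of-determinacy assertion you flag --- that a backward characteristic of the IBVP maximizer reaching $t=0$ inside $\{x>0\}$ forces $u(T,\bar x)=\widetilde u(T,\bar x)$ --- is used with exactly the same level of implicit justification in the paper's \Cref{lemmamon} and \Cref{teo:classical-sol}, so it is not a gap relative to the paper's standard.

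The one genuine gap is the degenerate configuration $\widetilde u(T,0)=\theta$. There your interval $(0,x^*)$ is empty and there is no family of strict configurations to pass to the limit over, so ``treat it as a limiting case'' does not yield an argument. It does admit a short direct one, which is what the paper does: the vertical backward characteristic through $(T,0)$ gives $\widetilde\gamma\equiv f(\theta)$, the unconstrained maximum of the admissible fluxes, so any maximizer satisfies $f(u(t,0^+))=f(\theta)$ for a.e.\ $t$; if $u(T,0^+)<\theta$ the backward characteristics issued from $(T,x)$ for small $x>0$ would hit the boundary with positive slope and force $f(u(t,0^+))<f(\theta)$ on a set of times of positive measure, while $u(T,0^+)>\theta$ is excluded because that trace would then be determined by the common initial datum and hence equal $\widetilde u(T,0^+)=\theta$. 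You should replace the ``limiting case'' remark with this argument.
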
 
\begin{proof} 
  Since $\widetilde{u}$ is the entropy admissible solution
  and $\widetilde{u}(T, \cdot)$ is non-increasing, then Oleinik type estimates imply that
  $\widetilde{u}(T, \cdot)$ is continuous. Moreover, 
  also the initial datum $\overline u$ is monotone non-increasing. 
  Indeed, due to the fact that  $\widetilde u(T,\cdot)$ is non-increasing, 
  for every $x\in \R$, there hold $\overline u(x^+)= \widetilde u(T, \xi^+(x))\leq  \widetilde u(T, \xi^-(x))=\overline u(x^-)$, 
  where $\xi^+(x), \xi^-(x)$ are respectively the intersection with $t=T$ of the maximal 
  and minimal characteristic issuing from $x$.

  We prove only case 1. since case 2. is completely analogous. Therefore, we assume that
  $\widetilde u(T, 0)\geq \theta$. Define $\bar x\in [0, +\infty]$ 
  as $\bar x = \inf \left\{x \in \R: \widetilde{u}(T, x) = \theta\right\}$ and denote
  with $u(t,x;\gamma)$ a solution to the maximization problem \eqref{max-J} associated with $\gamma$.

  We start with the case $\widetilde{u}(T, 0) = \theta$, so  that  $\bar x=0$. 
  In this case $f(\widetilde u(t,0)) = f(\theta)$ for every $t\in (0, T)$. 
  So $u$ necessarily satisfies $\int_0^T f(u(t, 0^+; \gamma))dt = f(\theta)T$, since it is a solution to \eqref{max-J}. 
  This immediately implies that $u(T, 0^+; \gamma) = \widetilde u(T, 0) = \theta$. \\
  Indeed, if $u(T, 0^+; \gamma) < \theta$, then there exists $\delta>0$ such that 
  $u(T, x; \gamma)<\theta$ for 
  $x\in (0, \delta)$, which implies that $\tau < T$, where
  $\tau=\inf\left\{t\in [0, T]: u(s, 0^+) < \theta \, \textrm{ for a.e. } s \in [t, T]\right\}$.
  This contradicts the fact that $f(u(t,0^+))=f(\theta)$ for a.e. $t \in [0, T]$. \\
  On the other hand it is also not possible that $u(T, 0^+; \gamma)>\theta$, since this would contradict 
  the fact that $u, \widetilde u$ share the same initial datum and $\widetilde u(T,0) = \theta$. 

  Assume now $\widetilde{u}(T, 0) > \theta$, so that $\bar x>0$, and suppose
  by contradiction that $u(T, 0^+; \gamma)\neq \widetilde u(T, 0)$.  
  Fix $L > \max \left\{\|f'(u)\|_{\LL\infty([0,T]\times (0, +\infty))}, 
  \|f'(\widetilde u)\|_{\LL\infty([0,T]\times (0,+\infty))}\right\}$.
  There exists $\delta>0$ such that $u(T, x; \gamma)\leq \theta $ for $x\in (0, \delta)$.
  Define $\tau=\inf\left\{t\in [0, T]: u(t, 0^+; \gamma)\leq \theta\right\}$. 
  Let $\xi$ be the forward characteristic issuing from $(\tau, 0)$ and 
  define $\hat x = \xi (T)$, so that $\hat x\in (0, LT]$.
  Then necessarily $u(T, x; \gamma) = \widetilde u(T,x)$ for $ x \in (\hat x, LT)$
  and $u(T, x; \gamma) \le \theta$ for $x \in (0, \hat x)$.  
  We claim that
  \begin{equation}
    \label{integrale}
    \int_0^{\hat x}  u(T,x; \gamma) dx < \int_0^{\hat x}  \widetilde u(T,x)dx.
  \end{equation} 
  In the case $\hat x \le \bar x$, we have $u(T,x)\leq \theta<\widetilde u(T,x)$ for a.e. $x \in (0, \hat x)$ 
  and so \eqref{integrale} holds.
  \\
  In the case $\hat x > \bar x$ we have that $u(T, x; \gamma) \le \theta$ and 
  $\widetilde{u}(T, x) \le \theta$ for a.e. $x \in (\bar x, \hat x)$.
  Moreover, if $x \in (\bar x, \hat x)$, then 
  $f'(u(T,x^+; \gamma )) \geq \frac{x}{T}$ and $f'(\widetilde u(T,x)) \leq \frac{x}{T}$;
  hence $u(T, x^+; \gamma) \le \widetilde{u}(T, x)$ and so \eqref{integrale} holds.


  Reasoning as in \Cref{lemmamon}, using of the Divergence Theorem and \eqref{integrale}, we deduce that
  \begin{equation*}
    \int_0^T f(\widetilde{u}(t, 0^+)) dt > \int_0^T f(u(t, 0; \gamma)) dt,
  \end{equation*}
  which contradicts the fact that $u(\cdot, \cdot; \gamma)$ is a solution to the maximization problem~\eqref{max-J}.  
  \end{proof} 
\subsection{Monotone non-decreasing initial datum}
In this section we   consider the case in which the initial datum may only generate shock waves or compression waves.

Note that since $\overline u$ is non-decreasing, there exists $\bar x\in[-\infty, +\infty]$   such that  
\begin{equation}\label{xbar} \bar x:= \sup\left\{x \in \R: \overline u(x) \le \theta\right\}.\end{equation}
  
We  start computing in the next two lemmata the functional $\mathcal{F}_{[0,T]}$ defined in \eqref{tv-def} 
for the entropy solution.

\begin{lemma}\label{lemmafdecrescente}
  Assume that $\overline u $ is monotone non-decreasing and let  $\widetilde u$ be the unique entropy solution 
  to~\eqref{eq:CP-classic}.
  Then the following holds:
\begin{enumerate} \item for every $\hat x \in \R$, the map $t \mapsto f(\widetilde{u}(t, \hat{x}))$ is
  non-increasing,
  \item $f(\widetilde{u}(T^-, 0))\geq f(\widetilde{u}(T, 0^+)), f(\widetilde{u}(T, 0^-))$.\end{enumerate}

In particular, denoting $\widetilde \gamma=f(\widetilde u(\cdot,0))$, we have that 
\begin{align*} \mathcal F_{[0, T]}(\widetilde\gamma)
     = &
      \tv_{(0, T)}(f(\widetilde u(\cdot, 0)))
    +
    |f(\overline u(0^-))- \widetilde\gamma(0^+)| + 
    |f(\overline u(0^+))- \widetilde \gamma(0^+)| \\
    &+ 2\widetilde\gamma(T^-)-f(\widetilde{u}(T, 0^+))-f(\widetilde{u}(T, 0^-))
\\ = &  \widetilde \gamma(0^+)- \widetilde \gamma(T^-) +
    |f(\overline u(0^-))- \widetilde\gamma(0^+)| + 
    |f(\overline u(0^+))- \widetilde \gamma(0^+)| \\
    &+ 2\widetilde\gamma(T^-)-f(\widetilde{u}(T, 0^+))-f(\widetilde{u}(T, 0^-)).\end{align*}  
\end{lemma}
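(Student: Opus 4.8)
The plan is to derive everything from item (1), so I would concentrate the effort there. First I would record the standard fact that monotonicity of the datum is preserved by the entropy flow: since $\overline u$ is non-decreasing, $\overline u(\cdot+h)\geq \overline u(\cdot)$ for every $h>0$, and the $L^1$-comparison principle together with translation invariance of the solution operator gives $\widetilde u(t,\cdot+h)\geq \widetilde u(t,\cdot)$. Hence $x\mapsto \widetilde u(t,x)$ is non-decreasing for every $t$, i.e. $\px\widetilde u\geq 0$ as a measure. (Equivalently, for concave flux non-decreasing data generate only shocks and compressions, never rarefactions.)

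For item (1) the heart of the matter is the identity, valid on the open set where $\widetilde u$ is regular,
\[
  \pt f(\widetilde u)=f'(\widetilde u)\,\pt \widetilde u=-f'(\widetilde u)\,\px f(\widetilde u)=-\big(f'(\widetilde u)\big)^2\,\px \widetilde u\leq 0,
\]
where I used the equation $\pt \widetilde u+\px f(\widetilde u)=0$ and the sign $\px \widetilde u\geq 0$ just established. Thus in the classical region $t\mapsto f(\widetilde u(t,\hat x))$ is non-increasing. Since, by the result recalled in Remark~\ref{rem:entr-sol-bv}, this map is $\mathbf{BV}$ in $t$, it remains only to check that its jumps are non-positive, i.e. that a shock crossing the vertical line $x=\hat x$ can only decrease the flux trace. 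If such a shock, with states $u^-\le u^+$ (spatial monotonicity) and speed $s=\frac{f(u^+)-f(u^-)}{u^+-u^-}$, crosses $\hat x$ at a time $t_c$, then for $t$ slightly larger than $t_c$ the point $\hat x$ lies on the side the shock has vacated: when $s>0$ the trace passes from $f(u^+)$ to $f(u^-)$ with $f(u^+)\ge f(u^-)$, and when $s<0$ it passes from $f(u^-)$ to $f(u^+)$ with $f(u^-)\ge f(u^+)$ (the sign of $f(u^+)-f(u^-)$ being exactly the sign of $s$), so in both cases the jump in $t$ is $\le 0$. Together with the classical inequality this proves (1). I would phrase this through Dafermos' generalized characteristics to stay within the paper's framework, but the displayed computation is what makes the statement transparent.

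For item (2) I would localize at the terminal node $(T,0)$, noting that $f(\widetilde u(T^-,0))=\widetilde\gamma(T^-)$ and that, for monotone non-decreasing data, at time $T$ the point $x=0$ is either a continuity point or lies on a single shock. If it is a continuity point, then $f(\widetilde u(T,0^-))=f(\widetilde u(T,0^+))=f(\widetilde u(T,0))$, and the non-increasing map $\widetilde\gamma=f(\widetilde u(\cdot,0))$ satisfies $\widetilde\gamma(T^-)\ge \widetilde\gamma(T)=f(\widetilde u(T,0))$, giving both inequalities. If a shock with states $u^-\le u^+$ and speed $s$ sits at the node at time $T$, then it reaches $x=0$ from the left when $s>0$ (so $\widetilde\gamma(T^-)=f(u^+)$) and from the right when $s<0$ (so $\widetilde\gamma(T^-)=f(u^-)$); since $\mathrm{sign}\,(f(u^+)-f(u^-))=\mathrm{sign}\,s$, in either case $\widetilde\gamma(T^-)\ge f(\widetilde u(T,0^-))$ and $\widetilde\gamma(T^-)\ge f(\widetilde u(T,0^+))$, which is exactly (2). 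This is the same Rankine--Hugoniot sign-check as in (1), now read off at the terminal time.

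Finally the two displayed identities are immediate. By (1) the map $\widetilde\gamma$ is non-increasing on $(0,T)$, so $\tv_{(0,T)}(f(\widetilde u(\cdot,0)))=\widetilde\gamma(0^+)-\widetilde\gamma(T^-)$; by (2) the two terminal terms in \eqref{tv-def}, evaluated with $u_1(T,0;\widetilde\gamma)=\widetilde u(T,0^-)$ and $u_2(T,0;\widetilde\gamma)=\widetilde u(T,0^+)$, shed their absolute values and sum to $2\widetilde\gamma(T^-)-f(\widetilde u(T,0^+))-f(\widetilde u(T,0^-))$. Substituting into \eqref{tv-def} yields the first displayed formula, and replacing $\tv_{(0,T)}$ by $\widetilde\gamma(0^+)-\widetilde\gamma(T^-)$ gives the second; the initial terms $|f(\overline u(0^\pm))-\widetilde\gamma(0^+)|$ are left untouched. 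I expect the only genuinely delicate point to be the rigorous treatment of the shock/jump contribution in (1) for merely $\mathbf{BV}$ solutions; everything else is a chain-rule computation plus Rankine--Hugoniot bookkeeping.
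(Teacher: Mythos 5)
Your proposal is essentially correct in its conclusions and in the final bookkeeping (the reduction of the two displayed identities to items (1)--(2) matches the paper exactly), but for item (1) you take a genuinely different route. The paper never invokes the preserved spatial monotonicity, the chain rule, or a Rankine--Hugoniot jump check; instead it argues pointwise by contradiction: given $t_1<t_2$ with $f(\widetilde u(t_1,0))<f(\widetilde u(t_2,0))$, it picks an intermediate continuity time via the Lax condition and compares the feet $\xi_1(0),\xi_2(0)$ of the backward generalized characteristics, using only the non-crossing property and the monotonicity of $\overline u$ to exclude all three orderings. The advantage of the paper's argument is that it compares two arbitrary times directly and therefore needs no structure theorem for the solution: it is insensitive to whether the trace $t\mapsto f(\widetilde u(t,\hat x))$ has a Cantor part, to accumulations of shock curves crossing the line $x=\hat x$, or to a shock lying along that line over a whole time interval. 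Your decomposition into ``classical region plus isolated shock crossings'' is exactly where the delicate point you flag lives: the a.e.\ inequality $\pt f(\widetilde u)\le 0$ plus non-positivity of the atomic jumps does not by itself yield monotonicity of a $\mathbf{BV}$ function of $t$ unless you also control the Cantor part and justify that every discontinuity of the trace is realized by a transversal shock crossing with the states you assign to it. So as written there is a real gap there, though the mechanism you identify (spatial monotonicity forces $\operatorname{sign}(f(u^+)-f(u^-))=\operatorname{sign}(s)$, hence downward temporal jumps) is sound and could be made rigorous through Dafermos' theory, essentially reproducing the paper's characteristic argument.

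For item (2) your dichotomy (continuity point at $(T,0)$ versus a shock reaching the node, with $\widetilde\gamma(T^-)$ read off from the side of approach) captures the right idea and the same Rankine--Hugoniot sign observation that closes the paper's case (iii). The paper, however, is more careful about the degenerate configurations: a shock sitting at $x=0$ on an interval $(\bar t,T)$ (its case (i), handled via the foot $\bar x$ of the minimal backward characteristics and monotonicity of $\overline u$), and a shock arriving \emph{tangentially} at $(T,0)$, where the ``right state'' of the terminal discontinuity is the temporal limit $\widetilde u(T^-,0)$ rather than $\widetilde u(T,0^+)$. Your sketch subsumes these only implicitly; they are precisely the cases where ``reaches $x=0$ from the left/right'' is ambiguous, so a complete write-up would need the case analysis the paper carries out.
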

\begin{proof}
We start proving that $t\to f(\widetilde u(t, \hat x))$ is non-increasing.   Without loss of generality we assume 
that $\hat{x} = 0$.

  By contradiction assume that there exist $0 < t_1 < t_2$ such that
  $f(\widetilde{u}(t_1, 0)) < f(\widetilde{u}(t_2, 0))$. We may assume without loss of generality that they are continuity points. 
  Note that the Lax condition implies that the map $t \mapsto f(\widetilde{u}(t, 0))$
  can have only downward jumps. Hence, we may assume that there exists  a continuity point $\bar t \in (t_1, t_2)$ such that 
      \begin{equation}
        \label{eq:intermediate-flux}
        f(\widetilde{u}(t_1, 0)) < f(\widetilde{u}(\bar t, 0)) < f(\widetilde{u}(t_2, 0)). 
      \end{equation}

  Denote with $\xi_1, \xi_2$ the backwards generalized characteristics exiting from
  $(t_1, 0)$ and $(t_2, 0)$. We have some possibilities.
  \begin{itemize}
      \item[(i)] $\xi_1(0) < \xi_2(0)$. In this case we necessarily deduce that $\xi_2(0) > 0$, otherwise
      $\xi_1$ and $\xi_2$ cross each other. Thus, $\widetilde{u}(t_2, 0) > \theta$.
      The monotonicity of $\overline u$ implies that
      $\widetilde{u}(t_1, 0) = \overline u(\xi_1(0)) \le \overline u(\xi_2(0)) = \widetilde{u}(t_2, 0)$.
      Since $f(\widetilde{u}(t_1, 0)) < f(\widetilde{u}(t_2, 0))$ we deduce that
      $\widetilde{u}(t_1, 0) < \theta$ and $\xi_1(0) < 0$.

      Denote with $\bar \xi$ the backward generalized characteristic exiting from $(\bar t, 0)$.\\
      If $\bar \xi(0) \le 0$, then $\bar \xi(0) < \xi_1(0)$ (otherwise $\bar \xi$ crosses $\xi_1$)
      and so $\widetilde{u}(\bar t, 0) \le \widetilde{u}(t_1, 0)<\theta $, 
      violating~\eqref{eq:intermediate-flux}.\\
      If $\bar \xi(0) > 0$, then $\bar \xi(0) \le \xi_2(0)$ (otherwise $\bar \xi$ crosses $\xi_2$)
      and so $\widetilde{u}(t_1, 0) < \theta < \widetilde{u}(\bar t, 0) \le \widetilde{u}(t_2, 0)$, 
      violating~\eqref{eq:intermediate-flux}.

      Therefore, this case is not possible.
    
    \item[(ii)] $\xi_1(0) = \xi_2(0)$. So in  $\xi_1(0)$ the initial data $\overline u$ would have a downward jump, 
    which is not possible by the assumption on monotonicity. 

    \item[(iii)] $\xi_1(0) > \xi_2(0)$. In this case we necessarily deduce that $\xi_2(0) < 0$, otherwise
      $\xi_1$ and $\xi_2$ cross each other. Thus, $\widetilde{u}(t_2, 0) < \theta$.
      The monotonicity of $\overline u$ implies that
      $\widetilde{u}(t_1, 0) = \overline u(\xi_1(0)) \ge \overline u(\xi_2(0)) = \widetilde{u}(t_2, 0)$.
      Since $f(\widetilde{u}(t_1, 0)) < f(\widetilde{u}(t_2, 0))$ we deduce that
      $\widetilde{u}(t_1, 0) > \theta$ and $\xi_1(0) > 0$.

      Consider now $\bar t \in (t_1, t_2)$ such that~\eqref{eq:intermediate-flux} holds 
      and $(\bar t, 0)$ point of continuity for $\widetilde{u}$.
      Denote with $\bar \xi$ the backward generalized characteristic exiting from $(\bar t, 0)$.\\
      If $\bar \xi(0) \ge 0$, then $\bar \xi(0) > \xi_1(0)$ (otherwise $\bar \xi$ crosses $\xi_1$)
      and so $\widetilde{u}(\bar t, 0) \ge \widetilde{u}(t_1, 0)$, 
      violating~\eqref{eq:intermediate-flux}.\\
      If $\bar \xi(0) < 0$, then $\bar \xi(0) \ge \xi_2(0)$ (otherwise $\bar \xi$ crosses $\xi_2$)
      and so $\widetilde{u}(t_2, 0) \le \widetilde{u}(\bar t, 0) < \theta <  \widetilde{u}(t_1, 0)$, 
      violating~\eqref{eq:intermediate-flux}.

      Therefore, this case is not possible. 
  \end{itemize}
So, the proof of 1. is concluded.
\smallskip

Now we show that $f(\widetilde{u}(T^-, 0))\geq f(\widetilde{u}(T, 0^+)), f(\widetilde{u}(T, 0^-))$. We show that 
$f(\widetilde{u}(T^-, 0))\geq  f(\widetilde{u}(T, 0^-))$ (the case $f(\widetilde{u}(T^-, 0))\geq f(\widetilde{u}(T, 0^+))$ 
is completely analogous). 

Let us consider an increasing  sequence $x_n\to 0^-$   such that $(T,x_n)$  are continuity points for $\widetilde u$ 
and $f(\widetilde{u}(T, x_n))\to f(\widetilde{u}(T, 0^-))$. We denote with $\xi_n$ the backward genuine characteristic 
passing through $(T,x_n)$.

We distinguish three cases.
\begin{itemize}
\item[(i)] There exists $\bar t<T$ such that all points $(t,0)$ for $t\in (\bar t, T)$ 
are discontinuity points for $\widetilde u$. In this case there exists a shock located in $x=0$ in the interval 
$(\bar t, T)$, so $\widetilde u(t, 0^-)\leq \theta$ and $\widetilde u(t, 0^+)\geq \theta$ for all $t\in  (\bar t, T)$. 
We consider a sequence $t_n\to T^-$ and the backward minimal characteristics $\widetilde \xi_n$ from $(t_n, 0)$.  
Then necessarily $\xi_n(0)\leq \widetilde \xi_n(0)\leq \bar x$, where $\bar x$ is defined in \eqref{xbar}. 
Then, we conclude by the monotonicity of $\overline u$, we get that 
$\widetilde u(T,x_n)\leq \widetilde u(t_n, 0^-)\leq \theta$, from which we deduce 
 $ f(\widetilde{u}(T, 0^-))\leq f(\widetilde{u}(T^-, 0))$.
\item[(ii)] There exists $\bar t<T$ such that  there exists a dense subset  of $ (\bar t, T)x\{0\}$ 
of continuity points for $\widetilde u$. 

Assume that  
$\widetilde u(t, 0)\leq \theta$ for $t\to T^-$ and $(t,0) $ continuity point. Let us denote $\widetilde\xi$ is the backward 
genuine characteristic passing through $(t,0)$.

In this case, $\widetilde u(T, x_n)\leq \theta$, and we proceed exactly as in case $1$. 
\item[(iii)] There exists $\bar t<T$ such that  there exists a dense subset  of $ (\bar t, T)\times\{0\}$ 
of continuity points for $\widetilde u$. 

Assume that  
$\widetilde u(t, 0)> \theta$ for $t\to T^-$ and $(t,0) $  continuity point. Let us denote $\widetilde\xi$ 
is the backward genuine characteristic passing through $(t,0)$, for $t\in (\bar t, T)$, such that $(t,0)$ continuity point.

If $\widetilde u(T, x_n)>  \theta$, then we proceed as in case $1$, by symmetry.

It remains to consider the case $\widetilde u(T, x_n)\leq \theta$. Assume that there exists $\tau\in (t, T)$ 
such that $\widetilde \xi(\tau)=0$. Then this would imply $f'(\widetilde(\tau, 0^-))<\widetilde\xi'(\tau)$, 
in contradiction with Lax conditions. Moreover, due to the fact that $\widetilde u(T, 0^-)\leq \theta$, 
we conclude that necessarily $\widetilde \xi(T)=0$ with $\widetilde \xi'(T)\geq 0$. This means that $\widetilde{u}$ 
admits a shock discontinuity
at $(T,0)$, connecting the left state $\widetilde{u}(T, 0^-)$ with the right state $\widetilde{u}(T^-, 0)$,
that has slope $\widetilde \xi'(T)\geq 0$,
which implies that $f(\widetilde{u}(T^-, 0))\geq  f(\widetilde{u}(T, 0^-))$.
\end{itemize}

\end{proof}

\begin{lemma} 
  \label{lemmashock}
  Assume that $\overline u $  is  monotone non-decreasing and let  $\bar x\in[-\infty, +\infty]$ as in \eqref{xbar}.
  
Let $\widetilde u$ be the unique entropy solution to~\eqref{eq:CP-classic} and $\widetilde \gamma= f(\widetilde u(\cdot, 0))$. 
We denote $\bar \xi$ the unique forward generalized characteristic issuing from  $\bar x$ at time $0$.
  
We get 
  \begin{equation}
    \label{funzionalecasoshock}
    \mathcal F_{[0,T]}(\widetilde \gamma)=
    \begin{cases} \max(f(\overline u(0^+), f(\overline u(0^-)) - f(\widetilde u(T,0^-)),
      & 
     \textrm{if }   \bar x\in [0, +\infty] \textrm{ and }\bar \xi(T)> 0,
      \vspace{.2cm}
      \\ \max(f(\overline u(0^+), f(\overline u(0^-)) - f(\widetilde u(T,0^+)),
      & 
     \textrm{if }   \bar x\in [0, +\infty) \textrm{ and }\bar \xi(T)< 0,
      \vspace{.2cm}
      \\\vspace{.2cm}
      \max(f(\overline u(0^+), f(\overline u(0^-)) - f(\widetilde u(T,0^+)),
      & \textrm{if }   \bar x\in [-\infty,0] \textrm{ and }\bar \xi(T)< 0,    \\
      \max(f(\overline u(0^+), f(\overline u(0^-)) - f(\widetilde u(T,0^-)),

      & 
   \textrm{if }   \bar x\in (-\infty,0] \textrm{ and }\bar \xi(T)> 0,
      
    \end{cases} 
  \end{equation} 
  and finally if $\bar \xi(T)= 0$,
  \begin{equation}\label{zero}\mathcal F_{[0,T]}(\widetilde \gamma)\leq f(\overline u(0^+)) - f(\widetilde u(T,0^-)) +f(\overline u(0^-)) 
    - f(\widetilde u(T,0^+)).\end{equation}
  \end{lemma}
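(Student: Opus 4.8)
The plan is to start from the explicit expression for $\mathcal F_{[0,T]}(\widetilde\gamma)$ already furnished by \Cref{lemmafdecrescente}, in which the total variation has been reduced to $\widetilde\gamma(0^+)-\widetilde\gamma(T^-)$ (using that $t\mapsto\widetilde\gamma(t)$ is non-increasing), and to simplify the boundary contributions at $t=0$ and at $t=T$ separately. First I would evaluate the initial flux trace $\widetilde\gamma(0^+)$. Since $\overline u$ is non-decreasing, near $x=0$ the datum is a jump-up $\overline u(0^-)\le\overline u(0^+)$, which for the concave flux $f$ is resolved by a single admissible shock with Rankine--Hugoniot speed $\sigma_0=\frac{f(\overline u(0^+))-f(\overline u(0^-))}{\overline u(0^+)-\overline u(0^-)}$. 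For $t\to0^+$ the node $x=0$ lies to the left of this shock when $\sigma_0\ge0$ and to its right when $\sigma_0<0$; in both situations one reads off $\widetilde\gamma(0^+)=\min\{f(\overline u(0^-)),f(\overline u(0^+))\}$. Consequently
\[
\widetilde\gamma(0^+)+|f(\overline u(0^-))-\widetilde\gamma(0^+)|+|f(\overline u(0^+))-\widetilde\gamma(0^+)|=\max\{f(\overline u(0^-)),f(\overline u(0^+))\},
\]
so that the formula of \Cref{lemmafdecrescente} collapses to the identity
\[
\mathcal F_{[0,T]}(\widetilde\gamma)=\max\{f(\overline u(0^-)),f(\overline u(0^+))\}+\widetilde\gamma(T^-)-f(\widetilde u(T,0^-))-f(\widetilde u(T,0^+)).
\]
Everything then reduces to identifying the one-sided time limit $\widetilde\gamma(T^-)$ with one of the two spatial traces $f(\widetilde u(T,0^\pm))$.

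Next I would exploit the monotone structure of the profile. Since $\overline u$ is non-decreasing, the comparison principle makes $\widetilde u(T,\cdot)$ non-decreasing as well, so it crosses the level $\theta$ at a single point, which by the theory of generalized characteristics is exactly $\bar\xi(T)$, the forward characteristic issued from the initial $\theta$-crossing $\bar x$ of \eqref{xbar}. Thus $\widetilde u(T,x)\le\theta$ for $x<\bar\xi(T)$ and $\widetilde u(T,x)\ge\theta$ for $x>\bar\xi(T)$. If $\bar\xi(T)>0$ (the two cases of \eqref{funzionalecasoshock} whose right-hand side features $f(\widetilde u(T,0^-))$), then $\widetilde u(T,0^\pm)\le\theta$; a possible shock sitting at $(T,0)$ joins the states $\widetilde u(T,0^-)\le\widetilde u(T,0^+)\le\theta$ and hence has strictly positive speed, so for $t\uparrow T$ the point $(t,0)$ lies to the right of the shock curve and $\widetilde\gamma(T^-)=f(\widetilde u(T,0^+))$ (if $\widetilde u$ is continuous at $(T,0)$ this is immediate). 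Substituting into the displayed identity yields $\mathcal F_{[0,T]}(\widetilde\gamma)=\max\{\ldots\}-f(\widetilde u(T,0^-))$, which is precisely \eqref{funzionalecasoshock} in the two cases with $\bar\xi(T)>0$. The case $\bar\xi(T)<0$ is symmetric: now $\widetilde u(T,0^\pm)\ge\theta$, the shock at $(T,0)$ has negative speed, $(t,0)$ lies to its left, so $\widetilde\gamma(T^-)=f(\widetilde u(T,0^-))$ and the surviving term is $-f(\widetilde u(T,0^+))$.

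Finally, for the boundary case $\bar\xi(T)=0$ I would not identify $\widetilde\gamma(T^-)$ exactly but only use monotonicity in time, namely $\widetilde\gamma(T^-)\le\widetilde\gamma(0^+)=\min\{f(\overline u(0^-)),f(\overline u(0^+))\}$. Plugging this bound into the simplified identity gives
\[
\mathcal F_{[0,T]}(\widetilde\gamma)\le\max\{\ldots\}+\min\{\ldots\}-f(\widetilde u(T,0^-))-f(\widetilde u(T,0^+))=f(\overline u(0^+))+f(\overline u(0^-))-f(\widetilde u(T,0^-))-f(\widetilde u(T,0^+)),
\]
which is exactly \eqref{zero}. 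Apart from the routine algebra, the one delicate point — and the step I expect to be the main obstacle — is the rigorous identification of the time trace $\widetilde\gamma(T^-)$ with the correct spatial trace $f(\widetilde u(T,0^\pm))$: this requires controlling the approach of the shock curve to $(T,0)$, i.e. showing that its Rankine--Hugoniot speed keeps the prescribed sign throughout a full left-neighbourhood of $t=T$, which I would justify through Dafermos' generalized characteristics and their non-crossing property rather than from the single jump at $(T,0)$.
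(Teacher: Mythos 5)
Your argument is correct and follows essentially the same route as the paper's proof: it starts from the expression for $\mathcal F_{[0,T]}(\widetilde\gamma)$ in \Cref{lemmafdecrescente}, identifies $\widetilde\gamma(0^+)=\min\{f(\overline u(0^-)),f(\overline u(0^+))\}$, matches $\widetilde\gamma(T^-)$ with $f(\widetilde u(T,0^+))$ or $f(\widetilde u(T,0^-))$ according to the sign of the shock speed at $(T,0)$ (equivalently the sign of $\bar\xi(T)$), and uses the time-monotonicity $\widetilde\gamma(T^-)\le\widetilde\gamma(0^+)$ for the borderline case $\bar\xi(T)=0$. The only difference is organizational: you treat the $t=0$ contribution uniformly via the Riemann problem at the origin and case-split on $\bar\xi(T)$, while the paper splits on the position of $\bar x$; the delicate identification of $\widetilde\gamma(T^-)$ that you flag is handled in the paper by the generalized-characteristics analysis carried out in \Cref{lemmafdecrescente}.
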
 
  \begin{proof}

  Throughout the proof we rely on the properties of generalized characteristics, see \cite{d}. 
  Note that $f(\widetilde u(t, 0^-))= f(\widetilde u(t, 0^+))$ for a.e. $t$ and 
  $f(\widetilde u(\cdot, 0^-)),f(\widetilde u(\cdot, 0^+))\in \mathbf{BV}(0,T)$.
  
By the definition  \eqref{tv-def}, and Lemma \ref{lemmafdecrescente} we have that 
  \begin{align*}
    \mathcal F_{[0, T]}(\widetilde\gamma)
    & = 
     \widetilde \gamma(0^+)- \widetilde \gamma(T^-)
    +
    |f(\overline u(0^-))- \widetilde\gamma(0^+)| + |f(\overline u(0^+))- \widetilde \gamma(0^+)| 
    \\ 
    \nonumber 
    & \quad  +2 \widetilde\gamma(T^-)
    -f(\widetilde u(T,0^+))- f(\widetilde u(T,0^-))\\
     \nonumber 
    & = \widetilde \gamma(0^+)+\widetilde \gamma(T^-)+
    |f(\overline u(0^-))- \widetilde\gamma(0^+)| + |f(\overline u(0^+))- \widetilde \gamma(0^+)| 
    \\ 
    \nonumber 
    & \quad -f(\widetilde u(T,0^+))- f(\widetilde u(T,0^-)).
  \end{align*}
  
We consider different cases depending on the value of $\bar x$.  

  \begin{enumerate}
  \item  {\bf Either $\bar x=+\infty$, or $\bar x\geq  0$ and $\bar \xi(T)>0$.}

     Observe that 
    $\widetilde u(T,x)\leq \theta$, for a.e. $x\leq 0$. 

   We start considering $\bar x>0$. Since   $\overline u$ is monotone non-decreasing and $\overline u(x) \le \theta$ 
   for a.e. $x\leq \bar x$, we have that $f(\overline u(0^-))=\widetilde \gamma(0^+)$ and 
    $ \widetilde \gamma(0^+)=f(\overline u(0^-)) \le f(\overline u(0^+))$. 
    Note also that $f(\widetilde u(T,0^-))\leq \widetilde\gamma(T^-) = f(\widetilde u(T,0^+))$.
    Hence, we deduce that
    \begin{align*}
      \mathcal F_{[0, T]}(\widetilde\gamma)
      & = \widetilde\gamma(0^+)-\widetilde \gamma(T^-)
      + \widetilde\gamma(T^-)-f(\widetilde u(T,0^-))
      + f(\overline u(0^+))- \widetilde \gamma(0^+)
      \\
      & = f(\overline u(0^+))-f(\widetilde u(T,0^-)),
    \end{align*}
    proving~\eqref{funzionalecasoshock}.
  
For $\bar x=0$, we observe that   $\widetilde \gamma(0^+)=\min(f(\overline u(0^-), f(\overline u(0^+)) $.  
Then we may proceed as before, getting
\begin{align*}
      \mathcal F_{[0, T]}(\widetilde\gamma)
      & = \widetilde\gamma(0^+)-\widetilde \gamma(T^-)
      + \widetilde\gamma(T^-)-f(\widetilde u(T,0^-))
      + \max(f(\overline u(0^+), f(\overline u(0^-))- \widetilde \gamma(0^+)
      \\
      & = \max(f(\overline u(0^+), f(\overline u(0^-))-f(\widetilde u(T,0^-)).
    \end{align*}

  \item  {\bf $\bar x \geq 0$ and $\bar \xi(T)<0$.}

Note that in this case  $\widetilde u(T,x)\geq \theta$ for a.e. $x\geq 0$. 

 We get as in case $1$  we have that   $ \widetilde\gamma(0^+)=\min(f(\overline u(0^-)),   f(\overline u(0^+))$ 
 whereas  $f(\widetilde u(T,0^+)) \leq   \widetilde\gamma(T^-)=f(\widetilde u(T,0^-)) $. In conclusion
\begin{equation*}
   \mathcal F_{[0,T]}(\widetilde \gamma)  
    =  \max(f(\overline u(0^-)),   f(\overline u(0^+))- f(\widetilde u(T,0^+)) .
  \end{equation*}

  \item  {\bf Either $\bar x=-\infty$ or $\bar x\leq 0$ and $\bar \xi(T)<0$.}
  
  In this  case  we observe that  $u(T,x)\geq \theta$ for a.e. $x\geq 0$. We proceed as in  item $1$ by symmetry.
   
  \item  {\bf $ \bar x\leq  0$ and $\bar \xi(T)>0$.}

Note that in this case  $\widetilde u(T,x)\leq \theta$ for a.e. $x\leq 0$.  We proceed as in item $2$ by symmetry.

  \item  {\bf   $\bar \xi(T)=0$.}

Note that in this case  $\widetilde u(T,x)\geq \theta$ for a.e. $x\geq 0$ and $\widetilde u(T,x)\leq \theta$ for a.e. $x\leq 0$. 

Arguing as above, we have that   $ \widetilde \gamma(0^+)=\min(f(\overline u(0^-)), f(\overline u(0^+)))$.

So we get 
  \begin{eqnarray*}
   \mathcal F_{[0,T]}(\widetilde \gamma)  
    &=&  \widetilde\gamma(0^+)-\widetilde\gamma(T^-)+2\widetilde\gamma(T^-)- f(\widetilde u(T,0^-)) -f(\widetilde u(T,0^+))\\ &+& \max( f(\overline u(0^+),f(\overline u(0^-))-\widetilde\gamma(0^+)) \\ &=&  \max( f(\overline u(0^+),f(\overline u(0^-))+\widetilde\gamma(T^-))- f(\widetilde u(T,0^-))  - f(\widetilde u(T,0^+))
    \\ &\leq &  \max( f(\overline u(0^+),f(\overline u(0^-))+\widetilde\gamma(0^+)- f(\widetilde u(T,0^-))  - f(\widetilde u(T,0^+))\\ &=&    f(\overline u(0^+)+f(\overline u(0^-) - f(\widetilde u(T,0^-))  - f(\widetilde u(T,0^+)).
  \end{eqnarray*}
\end{enumerate}
  \end{proof}

  Based on the previous lemma we show that in the case the initial datum generates only shock waves, 
  the entropy solution solves the minimization problem \eqref{maxmin}. 
  \begin{theorem} 
    \label{mon} Assume that 
  $\overline u $ is  monotone non-decreasing. 
  Let $\widetilde u$ be the unique entropy solution to \eqref{eq:CP-classic}.
  Then $\widetilde u$ is a  solution to \eqref{max-J} and also of \eqref{maxmin} for all 
  $M\geq M_0:=\mathcal F_{[0,T]} \{f(\widetilde u(\cdot, 0))\}$. In particular
    \begin{equation*}\min_{\gamma\in \mathcal{U}^M_{max}}\mathcal{F}_{[0,T]}(\gamma)
    =\mathcal{F}_{[0,T]}(\widetilde \gamma ).
    \end{equation*}
  \end{theorem}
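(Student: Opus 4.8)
The plan is to prove that $\mathcal F_{[0,T]}(\gamma)\ge \mathcal F_{[0,T]}(\widetilde\gamma)$ for \emph{every} maximizer $\gamma\in\mathcal U_{max}$. Granting this, since $\widetilde u$ solves~\eqref{max-J} by \Cref{teo:classical-sol} and $\widetilde\gamma\in\mathbf{BV}$ by \Cref{rem:entr-sol-bv}, we have $\widetilde\gamma\in\mathcal U^M_{max}$ whenever $M\ge M_0=\mathcal F_{[0,T]}(\widetilde\gamma)$, and the two facts together give $\min_{\gamma\in\mathcal U^M_{max}}\mathcal F_{[0,T]}(\gamma)=\mathcal F_{[0,T]}(\widetilde\gamma)$. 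Throughout I abbreviate $\alpha^\pm\doteq f(\overline u(0^\pm))$ and, for a fixed maximizer $\gamma$ with associated solution $u$, $\beta^\pm\doteq f(u(T,0^\pm;\gamma))$, so that~\eqref{tv-def} reads
\[
\mathcal F_{[0,T]}(\gamma)=\tv_{(0,T)}(\gamma)+|\alpha^--\gamma(0^+)|+|\alpha^+-\gamma(0^+)|+|\beta^--\gamma(T^-)|+|\beta^+-\gamma(T^-)|.
\]

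First I would record the rigidity of the terminal trace. Since $\overline u$ is non-decreasing and $f$ is strictly concave, $\widetilde u(T,\cdot)$ is non-decreasing and crosses the level $\theta$ only at $\bar\xi(T)$, where $\bar\xi$ is the forward characteristic issued from $\bar x$ as in~\eqref{xbar}. Hence if $\bar\xi(T)\ge 0$ then $\widetilde u(T,x)\le\theta$ for a.e. $x<0$, and the first statement of \Cref{lemmamon} forces $\beta^-=f(\widetilde u(T,0^-))$ for \emph{all} maximizers; symmetrically, if $\bar\xi(T)\le 0$ the second statement forces $\beta^+=f(\widetilde u(T,0^+))$. Thus at least one terminal trace, which I denote $\beta^\star$, is a fixed number independent of $\gamma$. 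When $\bar\xi(T)\ne 0$, matching with the case distinction of \Cref{lemmashock} shows that $\beta^\star$ is precisely the trace occurring in the formula $\mathcal F_{[0,T]}(\widetilde\gamma)=\max(\alpha^+,\alpha^-)-\beta^\star$, while the borderline case $\bar\xi(T)=0$ is treated separately below.

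The core estimate is then a single chain of triangle inequalities. Discarding the terminal term that does not carry $\beta^\star$ and using $\tv_{(0,T)}(\gamma)\ge|\gamma(0^+)-\gamma(T^-)|$, I would bound
\[
\mathcal F_{[0,T]}(\gamma)\ge |a-\gamma(0^+)|+|\gamma(0^+)-\gamma(T^-)|+|\gamma(T^-)-\beta^\star|\ge |a-\beta^\star|\ge a-\beta^\star,
\]
where $a\doteq\max(\alpha^+,\alpha^-)$ and I have kept only the initial term $|a-\gamma(0^+)|$ and discarded the other (nonnegative) one. The right-hand side equals $\mathcal F_{[0,T]}(\widetilde\gamma)$, which is the desired inequality; note that the argument uses nothing about $\gamma$ beyond its $\mathbf{BV}$-regularity and the pinning of $\beta^\star$.

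I expect the only real difficulty to be the borderline configuration $\bar\xi(T)=0$ (the last case of \Cref{lemmashock}), where a shock reaches the junction exactly at $t=T$. There both $\beta^-$ and $\beta^+$ are pinned by \Cref{lemmamon}, and one has to identify which of them coincides with $\widetilde\gamma(T^-)$ in order to match the value of $\mathcal F_{[0,T]}(\widetilde\gamma)$. Using that $\widetilde\gamma$ is non-increasing in $t$ (\Cref{lemmafdecrescente}) together with the Rankine--Hugoniot sign of the arriving shock, one checks that $\widetilde\gamma(T^-)=\max(\beta^-,\beta^+)$ and $\mathcal F_{[0,T]}(\widetilde\gamma)=a-\min(\beta^-,\beta^+)$, so one simply runs the chain above with $\beta^\star\doteq\min(\beta^-,\beta^+)$, discarding the terminal term that carries the larger trace. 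This bookkeeping, rather than the main inequality, is where care is needed.
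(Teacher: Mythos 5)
Your proposal follows essentially the same route as the paper's proof: the same triangle-inequality chain (the paper's \eqref{contomoduli}), the same pinning of the relevant terminal trace for every maximizer via \Cref{lemmamon}, and the same identification of $\mathcal F_{[0,T]}(\widetilde\gamma)$ with $\max\big(f(\overline u(0^+)),f(\overline u(0^-))\big)-\beta^\star$ via \Cref{lemmashock}, organized by the same case split on the sign of $\bar\xi(T)$. The only divergence is the borderline case $\bar\xi(T)=0$, where your bookkeeping needs the exact identity $\widetilde\gamma(T^-)=\max\big(f(\widetilde u(T,0^-)),f(\widetilde u(T,0^+))\big)$, which \Cref{lemmafdecrescente} only delivers as the inequality $\widetilde\gamma(T^-)\geq\max\big(f(\widetilde u(T,0^-)),f(\widetilde u(T,0^+))\big)$; the paper sidesteps this by keeping \emph{both} terminal terms, using the lower bound $\mathcal F_{[0,T]}(\gamma)\geq|f(\overline u(0^+))-f(u(T,0^-))|+|f(\overline u(0^-))-f(u(T,0^+))|$ against the upper bound \eqref{zero}, which is the safer variant of the same argument.
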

  \begin{proof}
    For every $\gamma\in \mathcal{U}^M_{max}$, we consider the associated function $u(t,x;\gamma)$, 
    see \Cref{defiammissibile}.  
    
    We fix $\gamma\in \mathcal{U}^M_{max}$, and we drop the dependence on $\gamma$ on the function $u$.
    Recalling \eqref{tv-def}, we get that 
  \begin{equation}\label{contomoduli} \begin{cases}\mathcal{F}_{[0,T]}(\gamma)\geq |f(\overline u(0^+)) - f( u(T,0^-))| 
    + |f(\overline u(0^-))-f(u(T,0^+))|,
\\   \mathcal{F}_{[0,T]}(\gamma)\geq  |f(\overline u(0^+)) - f( u(T,0^+))| + |f(\overline u(0^-))-f(u(T,0^-))|,\\   
\mathcal{F}_{[0,T]}(\gamma) \geq|\max(f(\overline u(0^+)),f(\overline u(0^-)))  - f( u(T,0^+))|,  \\ 
\mathcal{F}_{[0,T]}(\gamma)\geq |\max(f(\overline u(0^+)),f(\overline u(0^-)))  - f( u(T,0^-))|  . 
  \end{cases}\end{equation}

  We consider $\bar x$ as in \eqref{xbar}, and we denote $\bar \xi$ the unique 
  forward generalized characteristic issuing from  $\bar x$ at time $0$. 
  We divide the proof in several cases, using the same notation of \Cref{lemmashock}.  
  \begin{enumerate} 
 \item  {\bf Either $\bar x=+\infty$, or $\bar x\geq  0$ and $\bar \xi(T)>0$.}
 
Since $\widetilde u(T, x)\leq \theta $ for a.e.  $x\leq  0$, by \Cref{lemmamon}, we get that $f( u(T,0^-))=f( \widetilde u(T,0^-))$. So  recalling inequality  \eqref{contomoduli}, and by \eqref{funzionalecasoshock} we conclude that
\begin{eqnarray*}\mathcal{F}_{[0,T]}(\gamma)&\geq& |\max(f(\overline u(0^+), f(\overline u(0^-)) - f( u(T,0^-))| \\ &=&|\max(f(\overline u(0^+), f(\overline u(0^-)) - f( \widetilde u(T,0^-))|= \mathcal{F}_{[0,T]}(\widetilde \gamma) .\end{eqnarray*}

\item  {\bf   $\bar x\geq  0$ and $\bar \xi(T)<0$.}
 
Since $\widetilde u(T, x)\geq \theta $ for a.e.  $x\geq  0$, by \Cref{lemmamon}, we get that $f( u(T,0^+))=f( \widetilde u(T,0^+))$. So recalling inequality  \eqref{contomoduli}, and by \eqref{funzionalecasoshock} we conclude that
\begin{eqnarray*}\mathcal{F}_{[0,T]}(\gamma)&\geq& |\max(f(\overline u(0^+), f(\overline u(0^-)) - f( u(T,0^+))| \\ &=&|\max(f(\overline u(0^+), f(\overline u(0^-)) - f( \widetilde u(T,0^+))|= \mathcal{F}_{[0,T]}(\widetilde \gamma) .\end{eqnarray*}
\item  {\bf Either  $\bar x=-\infty$ or $\bar x\leq  0$ and $\bar \xi(T)<0$.}

It is the symmetric case to case $1$.
\item  {\bf   $\bar x\leq  0$ and $\bar \xi(T)>0$.}
 
It is the symmetric case to case $2$.

 \item  {\bf $\xi(T)=0$.} 

 In this case    $\widetilde u(x,T)\leq \theta$ a.e. for $x<0$ and $\widetilde u(x,T)\geq \theta$ a.e. for $x>0$. 
 Therefore, by \Cref{lemmamon} we get $f( u(T,0^-))=f( \widetilde u(T,0^-))$ and $f( u(T,0^+))=f( \widetilde u(T,0^+))$. 
 Recalling \eqref{zero} and \eqref{contomoduli}
 we get \begin{eqnarray*}
   \mathcal F_{[0,T]}(\widetilde \gamma)  
  &\leq & f(\overline u(0^+)) - f(\widetilde u(T,0^-))+f(\overline u(0^-)) - f(\widetilde u(T,0^+))\\&\leq&   |f(\overline u(0^+)) - f( u(T,0^-))| + |f(\overline u(0^-))-f(u(T,0^+))|\leq \mathcal{F}_{[0,T]}(\gamma).\end{eqnarray*} 
 \end{enumerate}
  \end{proof}
  
\subsection{Monotone non-increasing initial datum}
In this section we consider the case in which the initial datum may only generate rarefaction  waves.

Note that since $\overline u$ is non-increasing,   there exists $\bar x\in[-\infty, +\infty]$   
such that  \begin{equation}\label{xbar2} \bar x:= \sup\left\{x \in \R: \overline u(x) \ge \theta\right\}.\end{equation}

As in the previous section, we 
compute   $\mathcal{F}_{[0,T]}$ for the entropy solution $\widetilde u$ in this setting.
We  start computing in the next two lemmata the functional $\mathcal{F}_{[0,T]}$ for the entropy solution.

\begin{lemma}\label{lemmafcrescente}
  Assume   $\overline u $ is monotone non-increasing and let us consider   $\widetilde u$   the unique entropy solution to~\eqref{eq:CP-classic}.
  
  Then, for every $t > 0$, 
  the function $\widetilde{u}(t, \cdot)$ is continuous and for every $\hat x \in \R$, 
  the map $t \mapsto f(\widetilde{u}(t, \hat{x}))$ is
  non-decreasing.
  
In particular, denoting $\widetilde\gamma=f(\widetilde u(\cdot, 0))$, we have that 
\begin{eqnarray*} \mathcal F_{[0, T]}(\widetilde\gamma)
    & = &
      \tv_{(0, T)}(f(\widetilde u(\cdot, 0)))
    +
    |f(\overline u(0^-))- \widetilde\gamma(0^+)| + 
    |f(\overline u(0^+))- \widetilde \gamma(0^+)| 
\\ &= &  \widetilde \gamma(T)- \widetilde \gamma(0) +
    |f(\overline u(0^-))- \widetilde\gamma(0^+)| + 
    |f(\overline u(0^+))- \widetilde \gamma(0^+)| . \end{eqnarray*}  
     
\end{lemma}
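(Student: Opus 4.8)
The plan is to establish the three assertions of the lemma in order: (i) continuity of $\widetilde u(t,\cdot)$ for every $t>0$; (ii) monotonicity of $t\mapsto f(\widetilde u(t,\hat x))$ for each fixed $\hat x$; and (iii) the two closed-form expressions for $\mathcal F_{[0,T]}(\widetilde\gamma)$. This is the mirror image of Lemma~\ref{lemmafdecrescente}, with the roles of shocks and rarefactions interchanged, so several steps run in parallel with the signs reversed. For (i) I would first note that the entropy solution operator for~\eqref{eq:CP-classic} is order preserving and commutes with spatial translations; since $\overline u$ is non-increasing, this forces $\widetilde u(t,\cdot)$ to be non-increasing for every $t>0$. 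On the other hand the Lax condition of Definition~\ref{def-ent-sol-ibvp} requires $\widetilde u(t,x^-)\le \widetilde u(t,x^+)$ at every discontinuity, while monotonicity requires $\widetilde u(t,x^-)\ge \widetilde u(t,x^+)$; hence $\widetilde u(t,\cdot)$ has no jumps and is continuous. Combined with the Oleinik-type estimates this gives joint continuity of $\widetilde u$ on $\{t>0\}$ (there are no shocks), so that $\widetilde\gamma(t)=f(\widetilde u(t,0))$ is continuous in $t$ as well.

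For (ii) the continuity just obtained lets me work with genuine (straight) backward characteristics. Fix $\hat x$; by translating the initial datum (which stays non-increasing) I may assume $\hat x=0$. Arguing by contradiction, suppose there are $t_1<t_2$ with $f(v_1)>f(v_2)$, where $v_i\doteq\widetilde u(t_i,0)$. Let $s_i\doteq f'(v_i)$ be the slope of the backward characteristic through $(t_i,0)$ and $y_i$ its foot on $\{t=0\}$, so that $y_i=-t_i s_i$ and $v_i=\overline u(y_i)$. Since $f$ is strictly concave with maximum at $\theta$, one has $s_i>0\iff v_i<\theta\iff y_i<0$ and $s_i<0\iff v_i>\theta\iff y_i>0$. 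I would then split into cases according to the position of $v_1,v_2$ relative to $\theta$.

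If $v_1,v_2$ lie on opposite sides of $\theta$ (or one of them equals $\theta$), the ordering of the feet forced by $y_i=-t_is_i$ clashes with the ordering of $v_1,v_2$ dictated by the non-increasing monotonicity of $\overline u$: e.g.\ $v_1<\theta<v_2$ gives $y_1<0<y_2$, whence $v_1=\overline u(y_1)\ge \overline u(y_2)=v_2$, contradicting $v_1<v_2$. So these configurations never occur. If instead $v_1,v_2$ lie on the same side of $\theta$, then $f(v_1)>f(v_2)$ together with the monotonicity of $f$ on that side fixes the order of $v_1,v_2$, hence, through strict monotonicity of $f'$, the order of the magnitudes $|s_1|,|s_2|$, and, through non-increasing $\overline u$, the order of the feet $y_1,y_2$. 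One then finds simultaneously $t_1|s_1|>t_2|s_2|$ from the identities $y_i=-t_is_i$ and the ordering of the feet, and $t_1|s_1|<t_2|s_2|$ from $t_1<t_2$ together with $|s_1|<|s_2|$, a contradiction. This proves that $\widetilde\gamma=f(\widetilde u(\cdot,0))$ is non-decreasing.

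Finally, for (iii) I would substitute (i)--(ii) into the definition~\eqref{tv-def}. Continuity of $\widetilde u(T,\cdot)$ yields $f(u_1(T,0))=f(u_2(T,0))=f(\widetilde u(T,0))=\widetilde\gamma(T^-)$, so the two terms $|f(u_1(T,0))-\widetilde\gamma(T^-)|$ and $|f(u_2(T,0))-\widetilde\gamma(T^-)|$ vanish, giving the first displayed identity. Then, since $\widetilde\gamma$ is non-decreasing and continuous, $\tv_{(0,T)}(\widetilde\gamma)=\widetilde\gamma(T^-)-\widetilde\gamma(0^+)=\widetilde\gamma(T)-\widetilde\gamma(0)$, which gives the second identity. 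I expect the main obstacle to be step (ii): one must organise the case analysis on the position of $v_1,v_2$ relative to $\theta$ and ensure, at the level of continuity points and minimal/maximal backward characteristics, that the identifications $y_i=-t_is_i$ and $v_i=\overline u(y_i)$ are legitimate---which is exactly where the continuity from (i) is indispensable.
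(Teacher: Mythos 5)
Your proposal is correct in substance, and parts of it take a genuinely different route from the paper. For the continuity of $\widetilde u(t,\cdot)$ the paper traces the minimal and maximal backward characteristics $\xi^\pm$ from a hypothetical jump point and derives a contradiction between the ordering of their feet and the monotonicity of $\overline u$; you instead combine the comparison principle (which propagates the non-increasing monotonicity of $\overline u$ to $\widetilde u(t,\cdot)$) with the Lax condition $\widetilde u(t,x^-)\le \widetilde u(t,x^+)$, so that any admissible jump would have to be simultaneously upward and downward. This is shorter and buys you, via the Ole\v{\i}nik bound, the joint continuity you later use to identify $\widetilde\gamma(T^-)$ with $f(\widetilde u(T,0))$. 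For the monotonicity of $t\mapsto f(\widetilde u(t,\hat x))$ both arguments are a case analysis on the two genuine backward characteristics through $(t_1,0)$ and $(t_2,0)$; the paper organises it by the ordering of the feet $\xi_1(0)\lessgtr\xi_2(0)$ and uses non-crossing, while you organise it by the position of $v_1,v_2$ relative to $\theta$ and extract the contradiction from the identities $y_i=-t_is_i$. The two are essentially equivalent. The one point you flag but do not close is precisely the paper's first case, $y_1=y_2$: there the identification $v_i=\overline u(y_i)$ fails because the common foot is a downward jump of $\overline u$ generating a rarefaction fan, and your foot-ordering argument degenerates. It is closed exactly by the relation you already use: $s_i=-y/t_i$ forces $v_i=(f')^{-1}(-y/t_i)$, and strict concavity then gives $f(v_1)<f(v_2)$ directly (both values lie on the same branch of $f$, determined by the sign of $y$). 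With that sub-case added, and with $v_i=\overline u(y_i)$ replaced by $v_i\in[\overline u(y_i^+),\overline u(y_i^-)]$ at jump points of $\overline u$ when $y_1\neq y_2$ (the inequalities still go through), your argument is complete; the derivation of the two formulas for $\mathcal F_{[0,T]}(\widetilde\gamma)$ from (i)--(ii) is then exactly as in the paper.
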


\begin{proof}As before, we 
  rely on the properties of generalized characteristics, see \cite{d}.

We show that 
  the function $\widetilde{u}(t, \cdot)$ is continuous. 
  
  We fix $t=T$, but the argument is completely analogous for a generic $t$. 
  Assume by contradiction that there exists $\widetilde  x \in \R$ such that
  $\widetilde{u}(T, \widetilde x^-) \ne \widetilde{u}(T, \widetilde x^+)$. Denote with $\xi^-$ and
  $\xi^+$ respectively the minimal and the maximal backward characteristics from $(T, \widetilde  x)$. 
  Since  $\xi^-(0) < \xi^+(0)$
  and $\overline u$ non-increasing, we get $\overline u (\xi^-(0)^+) \geq \overline u(\xi^+(0)^-)$.
  This is not compatible with the fact that    $\dot \xi^- (t) = f'(\widetilde{u}(T, \bar x^-))\leq \dot \xi^+ (t) = f'(\widetilde{u}(T, \bar x^+))$ if $ \overline u (\xi^-(0)^+)\leq \theta$ and with the fact that $\dot \xi^- (t) = f'(\widetilde{u}(T, \bar x^-))\geq \dot \xi^+ (t) = f'(\widetilde{u}(T, \bar x^+))$ if $ \overline u (\xi^+(0)^-)\geq \theta$
  (see~\cite[(11.1.13) and Theorem~11.1.3]{daf-book}).  

We prove now that the map $t\to f(\widetilde{u}(t, \hat x))$ is non-decreasing. 
 Without loss of generality we assume that $\hat{x} = 0$.
 By contradiction assume that there exist $0 < t_1 < t_2\leq T$ such that
  $f(\widetilde{u}(t_1, 0)) >f(\widetilde{u}(t_2, 0))$.

  Denote with $\xi_1, \xi_2$ the genuine backward characteristics   from $(t_1, 0)$ and $(t_2, 0)$.
  \begin{enumerate}
 \item   If $\xi_1(0) = \xi_2(0)$, then a rarefaction wave is generated at $\xi_1(0)$. 
  In particular  $\widetilde u(t_1,0)=(f')^{-1}(-\xi_1(0) /t_1)$ 
  and $\widetilde u(t_2,0)=(f')^{-1}(-\xi_1(0) /t_2)$ and the traces
  $\widetilde u(t_1,0),\widetilde u(t_2,0)$ belong to $[\overline u (\xi_1(0)^+), \overline u(\xi_1(0)^-)]$.
  
  If $\xi_1(0)\leq 0$, we have that  $\overline u(\xi_1(0)^-)\leq \theta$ and then  $\widetilde u(t_1,0),\widetilde u(t_2,0)\leq \theta$. Using the fact that  $f$ is strictly concave, we get that $\widetilde u(t_1,0^-) = (f')^{-1}(-\xi_1(0)/t_1)
  < (f')^{-1}(-\xi_1(0)/t_2) = \widetilde u(t_2,0^-)$, and so we conclude  that $f(\widetilde u(t_1,0))< f(\widetilde u(t_2,0))$, in contradiction with our assumption. 

  If $\xi_1(0)>0$, we have that $\widetilde u(t_1,0),\widetilde u(t_2,0)\geq \theta$, and we argue exactly as before, by symmetry and obtain  $f(\widetilde u(t_1,0))< f(\widetilde u(t_2,0))$, in contradiction with our assumption.

  So, this case is not possible. 
  
 \item  If $\xi_1(0)>\xi_2(0)$, then, necessarily $\xi_2(0)\leq 0$,  and $\widetilde  u(t_2,0)\leq \theta$.  
 Moreover by monotonicity of $\overline u$, we have that  
  $\widetilde u(t_1,0) \leq \overline u(\xi_1(0)^-) \leq \overline u(\xi_2(0)^+)\leq \widetilde u(t_2,0)\leq \theta$. Therefore,    
  we conclude that $f(\widetilde u(t_1,0^-))\leq f(\widetilde u(t_2,0^-))$, in contradiction with our assumption.

  \item  If $\xi_1(0)<\xi_2(0)$, then, necessarily $\xi_1(0)\geq 0$,  and $\widetilde  u(t_1,0),\widetilde  u(t_2,0)\geq \theta$. 
   Recalling the monotonicity of $\overline u$, we get 
  $\widetilde u(t_1,0) \geq \overline u(\xi_1(0)^+) \geq \overline u(\xi_2(0)^-)\geq \widetilde u(t_2,0)\geq \theta$.  So   
  we conclude that $f(\widetilde u(t_1,0^-))\leq f(\widetilde u(t_2,0^-))$, in contradiction with our assumption.

      \end{enumerate}
      So, none of the previous cases is possible, and the proof is concluded. 
\end{proof}
\begin{lemma} 
  \label{monentro2}
  Assume that  
  $\overline u $ is monotone non-increasing. 
  Let $\widetilde u$ be the unique entropy solution to~\eqref{eq:CP-classic} and let 
  $\widetilde \gamma = f(\widetilde u(\cdot,0^-))$. 
  Then
  \begin{equation}
    \label{eq:estimate-F}
    \mathcal F_{[0,T]}(\widetilde \gamma)
    = 
    \begin{cases} 
       f(\widetilde u(T,0)) - \min\{f(\overline u(0^+)),f(\overline u(0^-))\}, 
      & \textrm{if } \bar x\neq 0,
      \\
      2 f(\theta) - f(\overline u(0^+)) - f(\overline u(0^-)),&  \textrm{if } \bar x=0,
    \end{cases} 
  \end{equation}
 where $\bar x$ has been defined in \eqref{xbar2}.

\end{lemma}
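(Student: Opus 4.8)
The plan is to reduce everything to the explicit expression for $\mathcal F_{[0,T]}(\widetilde\gamma)$ already obtained in \Cref{lemmafcrescente}, namely
\[
\mathcal F_{[0,T]}(\widetilde\gamma) = \widetilde\gamma(T) - \widetilde\gamma(0) + |f(\overline u(0^-)) - \widetilde\gamma(0^+)| + |f(\overline u(0^+)) - \widetilde\gamma(0^+)|,
\]
and to identify the three scalars $\widetilde\gamma(0^+)$, $\widetilde\gamma(T)=f(\widetilde u(T,0))$, together with the sign of $f(\overline u(0^-))-f(\overline u(0^+))$, as functions of the position of $\bar x$ relative to the origin. First I would record the elementary consequence of the monotonicity of $\overline u$ that, since $\overline u$ is non-increasing, one has $\overline u(x)\geq\theta$ for every $x<\bar x$ and $\overline u(x)<\theta$ for every $x>\bar x$, with $\bar x$ as in \eqref{xbar2}. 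This dichotomy is exactly what splits the analysis into the cases $\bar x>0$, $\bar x<0$ and $\bar x=0$ (the values $\bar x=\pm\infty$ falling into the first two).

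The central computation is the value $\widetilde\gamma(0^+)=\lim_{t\to 0^+}f(\widetilde u(t,0))$, which I would determine by solving the Riemann problem at the origin with left state $u_L=\overline u(0^-)$ and right state $u_R=\overline u(0^+)$. Since $\overline u$ is non-increasing we have $u_L\geq u_R$, and strict concavity of $f$ (so that $f'$ is strictly decreasing) forces the self-similar solution to be a rarefaction fan occupying the speeds $f'(u_L)\leq x/t\leq f'(u_R)$. Evaluating along the vertical line $x/t=0$ then yields three regimes, according to whether the fan lies entirely to the left of $x=0$, entirely to its right, or straddles it: concretely $\widetilde u(t,0)\to\overline u(0^-)$ if $u_L\leq\theta$, $\widetilde u(t,0)\to\theta$ if $u_R\leq\theta\leq u_L$, and $\widetilde u(t,0)\to\overline u(0^+)$ if $u_R\geq\theta$. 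Combined with the dichotomy above this gives $\widetilde\gamma(0^+)=f(\overline u(0^-))$ when $\bar x<0$, $\widetilde\gamma(0^+)=f(\overline u(0^+))$ when $\bar x>0$, and $\widetilde\gamma(0^+)=f(\theta)$ when $\bar x=0$.

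With $\widetilde\gamma(0^+)$ in hand the conclusion is a short case check, substituting into the displayed formula and using that $\widetilde\gamma(T)=f(\widetilde u(T,0))$ and $\widetilde\gamma(0)=\widetilde\gamma(0^+)$. When $\bar x<0$ both traces lie below $\theta$, where $f$ is increasing, so $f(\overline u(0^-))\geq f(\overline u(0^+))=\min\{f(\overline u(0^+)),f(\overline u(0^-))\}$; the term $|f(\overline u(0^-))-\widetilde\gamma(0^+)|$ vanishes and the remaining terms collapse to $f(\widetilde u(T,0))-f(\overline u(0^+))$. When $\bar x>0$ both traces lie above $\theta$, where $f$ is decreasing, and the roles of the two traces are simply exchanged, giving $f(\widetilde u(T,0))-f(\overline u(0^-))$; in both cases this is $f(\widetilde u(T,0))-\min\{f(\overline u(0^+)),f(\overline u(0^-))\}$, which is the first branch of \eqref{eq:estimate-F}. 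When $\bar x=0$ one has $\widetilde\gamma(0^+)=f(\theta)=\max f$, and since $t\mapsto\widetilde\gamma(t)$ is non-decreasing (again by \Cref{lemmafcrescente}) it is forced to equal $f(\theta)$ on all of $(0,T)$, so $\widetilde\gamma(T)-\widetilde\gamma(0)=0$ while the two absolute values sum to $2f(\theta)-f(\overline u(0^-))-f(\overline u(0^+))$, the second branch.

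The only delicate points I expect are the boundary regimes $u_L=\theta$ or $u_R=\theta$, where an edge of the fan is tangent to the line $x=0$; there one must check that the limiting value of $\widetilde u(t,0)$ is still the correct endpoint state (equivalently $\theta$), so that the case distinction by the sign of $\bar x$ remains seamless. The genuinely substantive inputs — spatial continuity of $\widetilde u(t,\cdot)$ and monotonicity in $t$ of $f(\widetilde u(t,0))$ — are already supplied by \Cref{lemmafcrescente}, so the present lemma is essentially a careful bookkeeping of boundary traces rather than a fresh application of the theory of generalized characteristics.
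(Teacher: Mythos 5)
Your proposal is correct and follows essentially the same route as the paper: both start from the expression for $\mathcal F_{[0,T]}(\widetilde\gamma)$ supplied by \Cref{lemmafcrescente}, split into the cases $\bar x<0$, $\bar x>0$, $\bar x=0$, identify $\widetilde\gamma(0^+)$ as $f(\overline u(0^-))$, $f(\overline u(0^+))$, or $f(\theta)$ respectively, and substitute. Your explicit justification of $\widetilde\gamma(0^+)$ via the Riemann fan at the origin (including the tangent edge cases $u_L=\theta$, $u_R=\theta$) is only a more detailed version of what the paper asserts directly from the monotonicity of $\overline u$.
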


\begin{proof} 
By Lemma \ref{lemmafdecrescente} we have that
\[\mathcal F_{[0,T]}(\widetilde \gamma)=f( \widetilde u(T,0))- \widetilde \gamma(0^+) +
    |f(\overline u(0^-))- \widetilde\gamma(0^+)| + 
    |f(\overline u(0^+))- \widetilde \gamma(0^+)| .   \]

We consider different cases depending on the value of $\bar x$ as defined in \eqref{xbar2}.    

\begin{enumerate}
\item {\bf $\bar x=0$. }

In this case a rarefaction wave is generated at $0$, and $\widetilde u(t, 0)=\theta$ for $t>0$, so $f( \widetilde u(T,0))= \widetilde \gamma(0)=\theta$. 
Then 
\[\mathcal F_{[0,T]}(\widetilde \gamma)=\theta-\theta +
   \theta- f(\overline u(0^-))+\theta-  
    f(\overline u(0^+)),   \] which give \eqref{eq:estimate-F}.
    
  \item  {\bf   $\bar x<0$.} 

In this case $\overline u(0^+)\leq \overline u(0^-)\leq \theta$. Therefore, $\widetilde\gamma(0^+)=f(\overline u(0^-)) \geq f(\overline u(0^+)) $.

  Therefore, 
  \begin{equation*}
    \mathcal F_{[0,T]}  (\widetilde \gamma) = f( \widetilde u(T,0))- \widetilde \gamma(0^+) +
    \widetilde\gamma(0^+)- 
    f(\overline u(0^+)) = f( \widetilde u(T,0))- f(\overline u(0^+)).
  \end{equation*}   
 
   \item  {\bf   $\bar x>0$.} 

In this case $\theta\leq \overline u(0^+)\leq \overline u(0^-)$. Therefore, $\widetilde\gamma(0^+)=f(\overline u(0^+)) \geq f(\overline u(0^-)) $ 
and we conclude as above.   
\end{enumerate}
\end{proof} 

 Based on the previous lemmata we show that  also in the case the initial datum generates only rarefaction waves, 
 the entropy solution solves the minimization problem \eqref{maxmin}. 
  \begin{theorem} 
    \label{mon2} Assume that 
  $\overline u $ is  monotone non-increasing. 
  Let $\widetilde u$ be the unique entropy solution to \eqref{eq:CP-classic}.
  Then $\widetilde u$ is a  solution to \eqref{max-J} and also of \eqref{maxmin} for all $M\geq M_0:=\mathcal F_{[0,T]} \{f(\widetilde u(\cdot, 0))\}$. In particular
    \begin{equation*}\min_{\gamma\in \mathcal{U}^M_{max}}\mathcal{F}_{[0,T]}(\gamma)
    =\mathcal{F}_{[0,T]}(\widetilde \gamma ).
    \end{equation*}
  \end{theorem}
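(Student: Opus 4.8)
The plan is to follow verbatim the scheme of the proof of \Cref{mon}, replacing the tools tailored to non-decreasing data by their non-increasing counterparts \Cref{lemmafcrescente}, \Cref{monentro2} and \Cref{lemmararefaction}. First I would record, exactly as in~\eqref{contomoduli}, the elementary lower bounds obtained from the definition~\eqref{tv-def}: since $\tv_{(0,T)}(\gamma)\geq|\gamma(0^+)-\gamma(T^-)|$, and since both initial terms share the value $\gamma(0^+)$ while both terminal terms share $\gamma(T^-)$, the triangle inequality gives, for every maximizer $\gamma\in\mathcal{U}^M_{max}$ with associated solution $u(\cdot,\cdot\,;\gamma)$,
\[
  \mathcal{F}_{[0,T]}(\gamma)\ \geq\ \bigl|f(\overline u(0^-))-f(u(T,0^+;\gamma))\bigr|,
  \qquad
  \mathcal{F}_{[0,T]}(\gamma)\ \geq\ \bigl|f(\overline u(0^+))-f(u(T,0^-;\gamma))\bigr|,
\]
together with the analogous bounds obtained by interchanging the one-sided limits. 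The key structural input is that, since $\overline u$ is non-increasing, the entropy solution $\widetilde u(t,\cdot)$ stays monotone non-increasing and, by \Cref{lemmafcrescente}, continuous at $t=T$; this is precisely the hypothesis needed to invoke \Cref{lemmararefaction}, which pins down one terminal trace of any maximizer to that of $\widetilde u$.

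I would then split according to the value of $\bar x$ defined in~\eqref{xbar2}, using the explicit value of $\mathcal{F}_{[0,T]}(\widetilde\gamma)$ from \Cref{monentro2}. In the case $\bar x>0$ one has $\widetilde u(T,0)\geq\theta$, so \Cref{lemmararefaction}(1) forces $u(T,0^+;\gamma)=\widetilde u(T,0)$ for every maximizer; combining the first lower bound above with the monotonicity of $t\mapsto f(\widetilde u(t,0))$ from \Cref{lemmafcrescente}, which yields $f(\widetilde u(T,0))\geq\widetilde\gamma(0^+)=f(\overline u(0^+))\geq f(\overline u(0^-))$, gives
\[
  \mathcal{F}_{[0,T]}(\gamma)\ \geq\ f(\widetilde u(T,0))-f(\overline u(0^-))\ =\ \mathcal{F}_{[0,T]}(\widetilde\gamma).
\]
The case $\bar x<0$ is symmetric: now $\widetilde u(T,0)\leq\theta$, \Cref{lemmararefaction}(2) gives $u(T,0^-;\gamma)=\widetilde u(T,0)$, and the second lower bound combined with $f(\widetilde u(T,0))\geq\widetilde\gamma(0^+)=f(\overline u(0^-))\geq f(\overline u(0^+))$ reproduces $\mathcal{F}_{[0,T]}(\widetilde\gamma)=f(\widetilde u(T,0))-f(\overline u(0^+))$.

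The remaining case $\bar x=0$ is handled directly rather than through \Cref{lemmararefaction}: here a rarefaction is centred at the node, $\widetilde\gamma\equiv f(\theta)$, and \Cref{monentro2} gives $\mathcal{F}_{[0,T]}(\widetilde\gamma)=2f(\theta)-f(\overline u(0^+))-f(\overline u(0^-))$. Since $f(\theta)=\max f$ and any maximizer satisfies $\int_0^T\gamma(t)\,dt=f(\theta)\,T$ with $\gamma(t)\leq f(\theta)$ pointwise, necessarily $\gamma\equiv f(\theta)$; hence $\gamma(0^+)=\gamma(T^-)=f(u_1(T,0))=f(u_2(T,0))=f(\theta)$, the total variation term vanishes, and $\mathcal{F}_{[0,T]}(\gamma)=2f(\theta)-f(\overline u(0^+))-f(\overline u(0^-))=\mathcal{F}_{[0,T]}(\widetilde\gamma)$. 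Together with $\widetilde\gamma\in\mathcal{U}^M_{max}$ (from \Cref{rem:entr-sol-bv} and \Cref{maxtheo1}), the three cases yield $\mathcal{F}_{[0,T]}(\gamma)\geq\mathcal{F}_{[0,T]}(\widetilde\gamma)$ for every $\gamma\in\mathcal{U}^M_{max}$, which is the assertion. I expect the main obstacle to be the bookkeeping of which one-sided terminal trace ($0^+$ or $0^-$) is actually fixed in each regime, and the preliminary verification that $\widetilde u(T,\cdot)$ is genuinely monotone non-increasing and continuous so that \Cref{lemmararefaction} is applicable; once the correct trace is matched to the entropy solution, the lower bounds close the estimate immediately.
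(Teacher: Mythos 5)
Your proposal is correct and follows essentially the same route as the paper's proof: the same lower bounds extracted from the definition \eqref{tv-def} (the paper's \eqref{moduli2}), the same use of \Cref{lemmararefaction} to pin one terminal trace of any maximizer to $f(\widetilde u(T,0))$, comparison with the explicit value from \Cref{monentro2}, and the same separate treatment of $\bar x=0$ via $\gamma\equiv f(\theta)$. The only cosmetic difference is that you determine explicitly which one-sided trace is fixed in each regime from the sign of $\bar x$, while the paper leaves the "either/or" alternative of \Cref{lemmararefaction} and closes the estimate with whichever bound in \eqref{moduli2} applies.
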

  \begin{proof}
    For every $\gamma\in \mathcal{U}^M_{max}$, we consider the associated function $u(t,x;\gamma)$, 
    see \Cref{defiammissibile}.  
    
     We fix $\gamma\in \mathcal{U}^M_{max}$, and we drop the dependence on $\gamma$ on the function $u$. 
     Recalling the definition of $\mathcal{F}_T$ in \eqref{tv-def}, we have that
       \begin{equation}\label{moduli2}
       \begin{cases} \mathcal{F}_{[0,T]}(\gamma) \geq |f( u(T,0^+))-\min(f(\overline u(0^+)),f(\overline u(0^-)))  |,  \\ 
       \mathcal{F}_{[0,T]}(\gamma)\geq | f( u(T,0^-))-\min f(\overline u(0^+)),f(\overline u(0^-)))   |  . 
  \end{cases}\end{equation}

We proceed with the different cases, depending on   $\bar x$, defined in \eqref{xbar2}, as in \Cref{monentro2}. 

\begin{enumerate}
\item {\bf $\bar x=0$}. 

We observe that if $\bar x=0$, then  $\widetilde u(0, t) =\theta$ for  $t\in (0,T)$. 
This implies that $\int_0^T f(\widetilde u(t,0))dt=f(\theta)T=(\max f) T$, therefore every other solution $u$ to the maximization problem \eqref{max-J} necessarily coincides with $\widetilde u$.

\item {\bf $\bar x\neq 0$. }

Note that due to the fact that $\overline u$ is monotone non-increasing, then only rarefaction waves can be generated, and $\widetilde u(T, \cdot)$ remains monotone non-increasing. 
We may apply then Lemma \ref{lemmararefaction}, which implies that either $f(\widetilde u(T,0))=f(u(T, 0^+))$ or  $f(\widetilde u(T,0))=f(u(T, 0^-))$. We conclude then using \eqref{moduli2} and  \eqref{eq:estimate-F} in Lemma \ref{monentro2}. 
\end{enumerate}
\end{proof}

\appendix
\section{Appendix}\label{appendix}
Throughout the section, we 
denote by $u(t,0)=u(t,0^+)$ the trace of the
solution of the initial-boundary value problem at $x=0$.
\begin{theorem}\label{bvfluxbdrytrace}
Let $f:\R\to \R$ be
of class $\CC1$, strictly concave, 
and satisfying~\eqref{def-theta}.
Given 
$\overline u\in \mathbf{BV} ((0,+\infty); \R)$,
and $k \in \mathbf{L^\infty}\left((0,+\infty)\,; (-\infty,\theta]\right)$, 
assume that $f\circ k\in \mathbf{BV} ((0,+\infty); \R)$.
Let
$u\in \mathbf{L^\infty}\left((0,+\infty)^2; \R\right)$ be the 
entropy   admissible  weak solution  of the initial-boundary value problem \begin{equation}
    \label{eq:ibvp}
    \begin{cases} 
          \pt u + \px f(u) = 0, & x >0, \, t > 0,
          \\
          u(0,x) = \overline{u}(x), & x>0,\\
          u(t,0)= k(x), & t>0.
        \end{cases}
    \end{equation}
    Then, we have
    \begin{equation}
    \label{eq:mainest}
    \tv_{(0, +\infty)} f( u(\cdot\,, 0))
    \leq
    2\tv_{(0, +\infty)} (f\circ k)
    +\|f'(\overline u)\|_\infty   \tv_{(0, +\infty)} ( \overline{u}) 
    +\os(f)\,,
    \end{equation}
    with
    \begin{equation}
    \label{eq:defosc}
       \os(f)\doteq\sup\Big\{\big| f\circ k(t) - f\circ \overline u(x)\big|,~~ t>0,\  x>0\Big\}.
    \end{equation}
\end{theorem}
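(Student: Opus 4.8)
The plan is to represent the boundary flux $\gamma(t)\doteq f(u(t,0^+))$ by means of the generalized characteristics of Dafermos~\cite{d,daf-book} and to split its variation according to the two alternatives allowed by the relaxed boundary condition. By the Oleinik-type estimates the trace $u(t,0^+)$ is well defined for a.e. $t$, and the boundary condition of Definition~\ref{def-ent-sol-ibvp} (the outgoing alternative, since here the domain is $\{x>0\}$ and $k\le\theta$) gives, for a.e. $t>0$, either the \emph{boundary regime} (a) $u(t,0^+)=k(t)\le\theta$, so that $\gamma(t)=f(k(t))$; or the \emph{interior regime} (b) $f'(u(t,0^+))<0$, i.e. $u(t,0^+)>\theta$, together with $\gamma(t)=f(u(t,0^+))\le f(k(t))$. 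In particular $\gamma\le f\circ k$ everywhere, with equality on the set where (a) holds. Since the essential variation is lower semicontinuous under $\LLloc{1}$ convergence and the construction is stable under front tracking, I would first establish the estimate for piecewise constant data $\overline u$, $k$ (with $f\circ k$ piecewise constant), reducing the analysis to finitely many straight characteristics and to a discrete set of times at which $\gamma$ may jump, and then pass to the limit.

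First I would analyse the interior regime (b). For such $t$ the minimal backward characteristic issued from $(t,0)$ is, by~\cite{d}, a segment of negative slope $f'(u(t,0^+))<0$; being of negative slope it moves into $\{x>0\}$ as time decreases and reaches $t=0$ at a point $y(t)>0$, so that $u(t,0^+)=\overline u(y(t))>\theta$ with $f'(\overline u(y(t)))=-y(t)/t$. Distinct minimal backward characteristics cannot cross in $\{x>0\}$; hence $t\mapsto y(t)$ is monotone non-decreasing on the \emph{whole} set where (b) holds, and the $y$-ranges coming from the different maximal (b)-intervals are ordered and essentially disjoint. Restricting $\gamma$ to regime (b) and composing with the monotone map $y$, the variation accumulated there is at most $\tv(f\circ\overline u)\le\|f'(\overline u)\|_\infty\,\tv(\overline u)$, the contributions of the various (b)-intervals summing without overlap.

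On regime (a) one has $\gamma=f\circ k$, giving a first copy of $\tv(f\circ k)$. The remaining and most delicate contribution is the sum of the jumps of $\gamma$ at the transition times separating the two regimes: a priori there may be arbitrarily many such times, whereas the target inequality permits only one further $\tv(f\circ k)$ and a single $\os(f)$. Here I would use that $\gamma\le f\circ k$ with equality on (a): consequently, apart from the jumps of $f\circ k$ itself (which are charged to $\tv(f\circ k)$), $\gamma$ can only jump downward when entering a (b)-interval and upward when leaving it, the landing and takeoff levels being the interior values $f(\overline u(y(\cdot)))$. Comparing each such jump with the baseline $f\circ k$ evaluated at the transition time, and exploiting the global monotonicity of the footpoints $y(\cdot)$ together with the Lax admissibility at $x=0$, I would show that consecutive up- and down-jumps telescope against the variation of $f\circ k$ along the intervening (a)-intervals, so that their total is bounded by a second copy of $\tv(f\circ k)$ plus a single unpaired discrepancy between a boundary value $f(k(\cdot))$ and an interior value $f(\overline u(\cdot))$, which by~\eqref{eq:defosc} does not exceed $\os(f)$. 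Adding the three blocks yields $\tv_{(0,+\infty)}f(u(\cdot,0))\le 2\,\tv_{(0,+\infty)}(f\circ k)+\|f'(\overline u)\|_\infty\,\tv_{(0,+\infty)}(\overline u)+\os(f)$. This transition bookkeeping is the main obstacle; everything else follows from the straight-line structure and non-crossing of Dafermos' minimal backward characteristics.

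Finally I would remove the piecewise-constant assumption. Approximating $\overline u$ in $\LLloc{1}$ with $\tv(\overline u^\eps)\to\tv(\overline u)$, and $k$ so that $f\circ k^\eps\to f\circ k$ with convergence of the variations, the corresponding entropy solutions $u^\eps$ converge to $u$ and their boundary traces converge a.e.; lower semicontinuity of the essential variation then upgrades the discrete estimate to the stated one. The only genuinely technical point throughout is the control of the transition jumps in the third step, where both the extra factor in $2\,\tv(f\circ k)$ and the single term $\os(f)$ originate.
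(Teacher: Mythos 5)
Your overall architecture coincides with the paper's: split the trace according to the two alternatives of the Bardos--Le~Roux--N\'ed\'elec condition, use backward characteristics with monotone, non-crossing footpoints to charge the ``interior'' regime to $\|f'(\overline u)\|_\infty\tv(\overline u)$, identify $\gamma$ with $f\circ k$ on the ``boundary'' regime, and control the transition jumps by telescoping. The single $\os(f)$ term is indeed, as you guess, one unpaired boundary-value/interior-value discrepancy (in the paper it sits at a precisely identified time $\tau^*$, the last departure from the interior regime). However, there are two concrete problems with your sketch. First, the sign of the transition jumps: you claim $\gamma$ jumps \emph{down} when entering an interior interval and \emph{up} when leaving it. The paper proves (via the admissibility of the shock emanating from, respectively returning to, the boundary: nonnegative slope at the left endpoint $\tau_1$ of each boundary interval, nonpositive slope at the right endpoint $\tau_2$) that \emph{both} transition jumps are downward in flux, i.e. $f(u(\tau_1^+,0))\le f(u(\tau_1,0))$ and $f(u(\tau_2^-,0))\ge f(u(\tau_2,0))$. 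This is not cosmetic: only with both jumps downward can the sum of the two jump sizes be regrouped as (interior value at $\tau_1$ minus interior value at $\tau_2$) plus (boundary value at $\tau_2^-$ minus boundary value at $\tau_1^+$), the first charged to $\tv(f\circ\overline u)$ and the second to $\tv_{(\tau_1,\tau_2)}(f\circ k)$, whence the factor $2$. With your signs the natural regrouping pairs a boundary value with an interior value on \emph{each} transition, each such pair is only bounded by $\os(f)$, and since there may be countably many interior intervals the estimate does not close.

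Second, your reduction to piecewise constant data followed by a limit passage has a gap: lower semicontinuity of the essential variation requires that $f(u^\eps(\cdot,0))\to f(u(\cdot,0))$ in $\LLloc{1}$ or pointwise a.e., and a.e.\ convergence of \emph{boundary traces} does not follow from $\LLloc{1}$ convergence of the solutions $u^\eps$; the trace is not a continuous functional of the solution in that topology, so this step needs a separate argument (e.g. via the divergence theorem applied on thin strips, or via the uniform $\mathbf{BV}$ bound on the approximate boundary fluxes combined with an identification of the weak-$*$ limit). The paper sidesteps this entirely by arguing directly for general $\mathbf{BV}$ data: it introduces forward generalized characteristics issuing from the boundary, the critical time $\tau^*$ after which the trace stays below $\theta$, and a decomposition of $\{t<\tau^*: u(t,0)\le\theta\}$ into countably many intervals, each bracketed by a shock that leaves and re-enters the boundary. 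If you repair the jump signs and either supply the trace-convergence argument or work directly with generalized characteristics, your plan becomes the paper's proof.
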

\begin{proof}
  We divide the proof in several steps.  \smallskip

  \noindent
  {\bf Step 1. Properties of the solution $u$ at the boundary.}

  Since $f\circ k\in \mathbf{BV}$ and $k (t)\le \theta$ for a.e. $t > 0$, it follows that
  $k$ admits
  at most countably many discontinuities and one-sided limits~$k(t^\pm)$ at any time $t>0$, because 
  $f\circ k$ enjoys these properties and $(f_-)^{-1}$ is continuous
  ($f_-$~defined as in Section~\ref{sec:bvp}).
  We may assume that $k$ is right continuous i.e. that $k(t)=k(t^+)$ for all $t>0$.

  Let $t>0$  such that $u(t,0)\leq \theta$. Then the following holds:  
  \begin{itemize}
  \item[(i)] 
  Due to the  boundary condition in~\eqref{eq:ibvp} (e.g.~see~\cite[Section~2.1]{lf}),  it holds $u(t,0)=k(t)$ if   $u(\cdot, 0)$ is continuous at $t$, otherwise  $u(\cdot, 0)$
  admits one-sided limits, and we have
  $u(t,0)=u(t^-,0)=k(t^-)$, $u(t^+,0)=k(t)$.

  \item[(ii)] If $u(t,0)<\theta$,  tracing backward generalized characteristics 
  starting at  points $(t,x)$, with $x>0$ arbitrary close to $0$, 
  we  deduce that 
  \begin{equation}
      \label{eq:leftext-tau'}
      \exists~\tau'<t\qquad{s.t.}\qquad u(s,0)< \theta\quad\forall~s\in(\tau', t).
  \end{equation}

  \item[(iii)] If $u(t,0)=\theta$, then, by item (i), it holds 
    $u(t^-,0)=\theta=k(t)$. Moreover, we deduce that
    \begin{equation}
      \label{eq:leftext-tau'-2}
      \exists~\tau'<t\qquad{s.t.}\qquad u(s,0)\leq  \theta\quad\forall~s\in(\tau', t).
    \end{equation}
    If it were not true, there would exist  a sequence $\{t_n\}_n$, with 
    $t_n \uparrow t$, such
    that $ u(t_n, 0)\downarrow \theta $   
    and so  $f'(u(t_n,0))\uparrow 0$. Considering the maximal backward characteristics 
    starting at the points $(t_n,0)$, this would imply that they must cross each other in the domain $\{x>0\}$ 
    for $n$ sufficiently large, which is not possible.
  \end{itemize}


  \noindent 
  {\bf Step 2. Definition of the generalized characteristics issuing from the boundary. }

  We will say that a map $\xi\colon [\tau_1, \tau_2] \to \R$,
  $\tau_2\in \R^+\cup{+\infty}$,
  is a generalized characteristic starting at the point $(\tau_1, 0)$ if:
  \vspace{-2pt}
  \begin{itemize}
    \item[-]  $\xi(\tau_1)=0$;
    \vspace{-2pt}
    \item[-]  either $\tau_2 =+\infty$ or $\xi(\tau_2)=0$;
    \vspace{-2pt}
    \item[-] $\xi(t)>0$ for all $t\in (\tau_1,\tau_2)$ and the restriction 
      of $\xi$ to $(\tau_1,\tau_2)$ is a generalized characteristic 
      in the classical sense (e.g. see~\cite{d,daf-book}).
  \end{itemize}
  For all $t>0$ such that 
  \begin{equation}
  \label{eq:condexistforwchar}
      u(t,0)<\theta\quad  \text{  or  } \quad k(t)<k(t^-)=u(t,0)=\theta,\quad \text{ or   }\quad 
  k(t)<\theta<u(t,0)\leq \pi_+(k(t)),
  \end{equation} where $\pi_+(u)\doteq (f_+)^{-1} (f(u))$
  ($f_+$ defined as in Section~\ref{sec:bvp}) 
  we denote   $s\to \xi_{t}(s)$   the {\it maximal forward generalized characteristic} starting at $(t,0)$, i.e. the forward generalized characteristic starting at $(t,0)$ with slope 
  \begin{equation}
    \label{eq:slopeforwchar}
    \lambda_t \doteq
    \begin{cases}
        \ \ f'(u(t,0)),\qquad
        &\text{if}\quad \ u(t,0) <\theta, 
        \\
        \noalign{\bigskip}
        \dfrac{f(u(t,0))-f(k(t))}{u(t,0)-k(t)},
        \qquad
        & \!\!\!
        \begin{array}{l}
          \text{if}\quad \ k(t)<\theta<u(t,0)\leq \pi_+(k(t)),
          \\
          \text{or}\quad 
          k(t)<k(t^-)=u(t,0)\leq \theta.
        \end{array}
          \end{cases}
  \end{equation}

  We denote $\text{Dom}(\xi_t)=[t, \hat t\,]$, $\hat t\in \R^+\cup{+\infty}$, the domain of existence of
  $\xi_t$.
  Observe that $\lambda_t\geq 0$ by definition~\eqref{eq:slopeforwchar}.
  Moreover, if $\hat t\in \R^+$, then we have
  $\xi_t(\,\hat t\,)=0$. 
 
  \smallskip

  \noindent
  {\bf Step 3. Definition of   $\tau^*$ and computation of total variation of $f(u(t,0))$ in $(\tau^*, +\infty)$.} 

  Let 
  \begin{equation}
      \tau^*\doteq\inf\Big\{t>0~\big|~~\eqref{eq:condexistforwchar} \ \  \text{holds and}\quad \text{Dom}(\xi_t)=[t, +\infty) \Big\}.
      \label{eq:tau*def}
  \end{equation}
  Note that if \eqref{eq:condexistforwchar} does not hold for any $t>0$, 
  then there is no   forward generalized characteristics starting at $(t,0)$,
  and we set in this case $\tau^*=+\infty$. 
  Moreover, it may happen that for all $t>0$ for which \eqref{eq:condexistforwchar} holds, 
  it holds that $\text{Dom}(\xi_t)=[t, \hat t]$ with $\hat t\in \R^+$. 
  Also in this case by definition $\tau^*=+\infty$.

Let $\tau^*<+\infty$. 
Observe that 
  \begin{equation}
  \label{eq:trace u-above-tau*}
      u(t,0)\leq \theta\qquad\forall~t>\tau^*.
  \end{equation}
  In fact, if $u(t,0)> \theta$ 
  at some $t>\tau^*$, we could trace the maximal backward characteristics $\vartheta_t$ starting at $(t,0)$,
  which would meet at some point $x>0$
  a forward generalized characteristic~$\xi_{t'}$ starting at some $(t',0)$,
  $t'\in (\tau^*,t)$.
  But a backward characteristic
  cannot cross a forward generalized characteristic.
  We deduce from~\eqref{eq:trace u-above-tau*}, recalling  property (i)  at {\bf Step 1}, that 
  \begin{equation}
  \label{eq:tvf-tau^*}
      \tv_{(\tau^*, +\infty)} f( u(\cdot\,, 0))
      =
      \tv_{(\tau^*, +\infty)} (f\circ k).
  \end{equation}

  We observe that if $\tau^*<+\infty$, actually it is the minimum in \eqref{eq:tau*def} since the set of generalized characteristic is closed with respect to uniform convergence. \\
  \noindent
  {\bf Step 4. The variation of  $f(u(t, 0))$ at $t=\tau^*<+\infty$.} 

  We show first of all  that
  \begin{equation}\label{utau8}
      u(\tau^*,0)>\theta.
  \end{equation}
  If it were not true,  either 
  $u(\tau^*,0)<\theta$ or $u(\tau^*,0)=\theta$, and we are going to show that both these possibilities cannot verify. 
  \begin{enumerate}
      \item Assume $u(\tau^*,0)<\theta$.
      Then, by \eqref{eq:leftext-tau'}, there exists $\tau'<\tau^*$ 
  with $u(t,0)< \theta$ for $t\in(\tau', \tau^*)$. 
  Then, consider the maximal forward generalized characteristic
  $\xi_t$ starting at some point
  $(t,0)$, $t\in (\tau', \tau^*)$.
  Observe that, if $\text{Dom}(\xi_t)=[t, \hat t\,]$, $\hat t\leq \tau^*$, 
  then we would have $\xi_t(\hat t\,)= 0$, \, $\xi_t'(\hat t\,)\leq 0$.
  Hence,   the Lax condition  implies that  $f'(u(\hat t,0))\leq \xi_t'(\hat t\,)\leq 0$
  which in turns gives  $u(\hat t,0)\geq \theta$,
  in contradiction with~\eqref{eq:leftext-tau'} since $\hat t \in (\tau', \tau^*)$.
  If $\text{Dom}(\xi_t)=[t, \hat t\,]$, $\hat t> \tau^*$, then 
  $\hat t<+\infty$ by minimality of $\tau^*$, recalling definition~\eqref{eq:tau*def}. 
  Therefore,  there exists 
  $\widetilde{\tau}>\tau^*$ such that
  $\xi_t(\,\widetilde{\tau }\,)=\xi_{\tau^*}(\,\widetilde{\tau }\,)$ and then  $\xi_t(s)$ coincides with $\xi_{\tau^*}(s)$
  for all $s\geq \widetilde{\tau }$. So    again it will give $\hat t = +\infty$ in contradiction with definition~\eqref{eq:tau*def}.
    \item Assume  $u(\tau^*,0)=\theta$.
      By property \eqref{eq:leftext-tau'-2},  we may  define
  \begin{equation}
  \label{eq:taubar1-def}
      {\tau}_1\doteq\inf\Big\{
      \tau'\in (0,\tau^*\,)
      ~\big|~ 
      u(t,0)\leq \theta\quad\forall~t\in(\tau', \tau^*)
      \Big\}.
  \end{equation}
  By definition~\eqref{eq:taubar1-def} it follows that 
  \begin{eqnarray}
  \label{eq:tau1-ineq1}
      u({\tau}_1,0)>\theta.
  \end{eqnarray}
  In fact, if 
  $u({\tau}_1,0)\leq \theta$, by the same arguments above we would deduce that there exists ${\tau}'<{\tau}_1$
  such that $u(t,0)\leq \theta$ for all $t\in ({\tau}', {\tau}_1)$, contradicting the definition~\eqref{eq:taubar1-def}. 
  Moreover, observe that, relying again on 
  properties (i)-(ii) at   {\bf Step 1}, and because of definition~\eqref{eq:taubar1-def}, we have $u({\tau}_1^+,0)=k({\tau}_1)$,
  and
  \begin{equation}
      \label{eq:tau1-ineq2}
      k({\tau}_1)
      < \theta,\qquad\quad
      \pi_+(k({\tau}_1))\geq u({\tau_1},0).
  \end{equation}
  In fact, if either $k({\tau}_1)= \theta$, or $\pi_+(k({\tau}_1))< u({\tau}_1,0)$,
  then the initial boundary Riemann problem on $\{t\geq {\tau}_1, \, x\geq 0\}$, 
  with initial datum $u({\tau}_1,0)$, and boundary datum $k({\tau}_1)$,
  would be solved by the constant function
  $u({\tau}_1,0)>\theta$, which is incompatible with the fact that 
  $u({\tau}_1^+,0)\leq \theta$
  by definition~\eqref{eq:taubar1-def}.
  Therefore, due to  \eqref{eq:taubar1-def},
  \eqref{eq:tau1-ineq2}, condition~\eqref{eq:condexistforwchar}
  is verified at time $t={\tau}_1$, and we may consider the maximal forward generalized characteristic $\xi_{{\tau}_1}$ starting at $({\tau}_1,0)$, 
  ${\tau}_1<\tau^*$.
  By  the same arguments of the previous case we deduce that
  $\text{Dom}(\xi_{{\tau}_1})=[{\tau}_1, +\infty)$,
  which is contrast with the minimality of $\tau^*$ in Definition~\eqref{eq:tau*def}.
  \end{enumerate}

  Observe then that due to \eqref{utau8} and to the Definition of $\tau^*$, it holds that 
  $f(u((\tau^*)^+,0))=f(k((\tau^*)^+))$,
  and that
  $f(u(\tau^*,0))\in \big[f(\overline{u}((x^*)^-), 
  f(\overline{u}((x^*)^+)]$ where $x^*$ is  the position at $t=0$ of the maximal backward characteristic  starting at $(\tau^*, 0)$. 
  We then have that 
  \begin{equation}
  \label{osc-est}
      \big|f(u((\tau^*)^+,0))-f(u(\tau^*,0))\big|
      \leq \os(f),
  \end{equation}
  with $\os(f)$ defined as in~\eqref{eq:defosc}. 

  \smallskip

  \noindent
  {\bf Step 5. Structure of the trace of the  solution $u(\cdot, 0)$ in $(0,\tau^*)$.} 

  We claim that there exist at most countably many intervals 
  $(\tau_{n,1},\tau_{n,2})\subset [0,\tau^*]$,
  $n\in \N$, with the property
  \begin{equation}
  \label{eq:prop-decomp-1}
      u(\tau_{n,i},0)> \theta,\quad i=1,2,\qquad\qquad 
      u(t,0)\leq  \theta,\qquad\forall~t\in 
      (\tau_{n,1},\tau_{n,2}),
  \end{equation}
  such that 
  \begin{equation}
  \label{eq:prop-decomp-1b}
      \Big\{t\in (0,\tau^*) ~\big|~ u(t,0)\leq  \theta
      \Big\} = \bigcup_n~ (\tau_{n,1},\tau_{n,2})\,;
  \end{equation}
  see \Cref{fig:structure-tau}.

  \begin{figure} 
    \centering
    \begin{tikzpicture}[line cap=round,line join=round,x=1.2cm,y=1.2cm]
  

      \draw[->, line width=1.1pt] (1, .5) -- (1, 7.) node[below right]{$t$};
      \draw[->, line width=1.1pt] (1, .5) -- (8.5, .5) node[above]{$x$};


      \node[inner sep=0, anchor=east] at (0.9, 6.4) {$\tau^*$};

      \draw[line width=1.1pt, color=blue] (1, 0.8) node[left] {\color{black} $\tau_{1,1}$} to [out=40, in=270] (1.3, 1.15) 
      to [out=90, in=-40] (1, 1.5) node[left] {\color{black} $\tau_{1,2}$}; 
      \draw[line width=1.1pt, color=blue] (1, 2.) node[left] {\color{black} $\tau_{2,1}$} to [out=20, in=270] (1.6, 2.5) 
      to [out=90, in=-20] (1, 3.) node[left] {\color{black} $\tau_{2,2}$}; 
      \draw[line width=1.1pt, color=blue] (1, 4.) node[left] {\color{black} $\tau_{3,1}$} to [out=35, in=270] (1.4, 4.5) 
      to [out=90, in=-35] (1, 5) node[left] {\color{black} $\tau_{3,2}$}; 

      \draw[line width=.8pt] (1, 0.8) -- (1.6, 0.5);
      \draw[line width=.8pt] (1, 1.5) -- (3.2, 0.5);
      \draw[line width=.8pt] (1, 2.) -- (3.6, 0.5);
      \draw[line width=.8pt] (1, 3.) -- (7.5, 0.5);
      \draw[line width=.8pt] (1, 4.) -- (7.8, 0.5);
      \draw[line width=.8pt] (1, 5.) -- (8.2, 0.5);

      \draw[line width=1.pt] (1, 6.5) -- (2.2, 6.75) node[below] {\color{black}$\xi_{t}$};
      \draw[line width=1.pt, dashed] (2.2, 6.75) -- (3.4, 7.);

    \end{tikzpicture}
    \caption{A possible configuration, in the $(t, x)$ plane, 
      related to the intervals $(\tau_{n, 1}, \tau_{n,2}) \subseteq (0, \tau^*)$, described
      in~\eqref{eq:prop-decomp-1}-\eqref{eq:prop-decomp-1b}, with $n=1, 2, 3$. 
      The maximal forward generalized characteristics $\xi_t$ (see Step~2), for $t > \tau^*$, is defined on $[t, +\infty)$.
      Note that the generalized backward characteristics starting form points $(t, 0)$, with time $t < \tau^*$ not in
      $(\tau_{n, 1}, \tau_{n,2})$, enter the domain and reach, at time $0$, a point $x > 0$.
    }
        \label{fig:structure-tau}
  \end{figure}
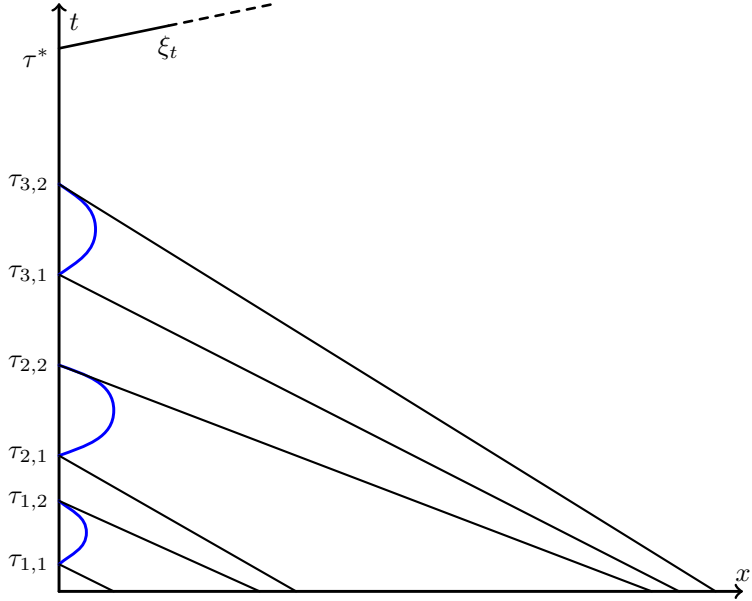

  We start fixing any time
  $\tau\in (0,\tau^*)$ where 
  $u(\tau,0)\leq \theta$.  
  We claim that there exist $\tau_1<\tau_2\leq \tau^*$ such that  $\tau\in (\tau_1, \tau_2)$ and \begin{equation}
  \label{eq:prop-decomp-4}
      u(\tau_{i},0)> \theta,\quad i=1,2,\qquad\qquad 
      u(t,0)\leq  \theta,\qquad\forall~t\in 
      (\tau_{1},\tau_{2}).
  \end{equation}
  We observe that since $u(\tau, 0)\leq \theta$, by   properties \eqref{eq:leftext-tau'}, \eqref{eq:leftext-tau'-2} there exists $\tau'<\tau$ such that  $u(t,0)\leq \theta$ for $t\in(\tau', \tau)$.
  Let us define 
  \[
      {\tau}_1\doteq\inf\Big\{
      \tau'\in (0,\tau\,)
      ~\big|~ 
      u(t,0)\leq \theta\quad\forall~t\in(\tau', \tau)
      \Big\}\,.
  \]
  Then, $\pi_+(k({\tau}_1))\geq u(\tau_1, 0)>\theta\geq k(\tau_1)$.

  Next, consider the maximal forward generalized characteristic $\xi_{{\tau}_1}$ starting at $({\tau}_1,0)$. 
  Since ${\tau}_1<\tau<\tau^*$, it follows from the minimality of $\tau^*$ in ~\eqref{eq:tau*def} that 
  $\text{Dom}(\xi_{{\tau}_1})=[{\tau}_1, {\tau}_2]$, for some ${\tau}_2\leq \tau^2$, and $\xi_{{\tau}_1}({\tau}_1)=\xi_{{\tau}_1}({\tau}_2)=0$. Arguing as in \eqref{eq:trace u-above-tau*} we deduce that 
  \[ 
      u(t,0)\leq\theta
      \qquad\forall~t\in 
      ({\tau}_1,{\tau}_2).
  \]
  Observe that by Lax condition we have $f'(u({\tau}_2,0))\leq \xi'_{{\tau}_1}({\tau}_2)\leq 0$,
  which implies $u({\tau}_2,0)\geq\theta$.
  We will show that necessarily $u(\tau_2, 0)>\theta$. 
  Indeed,  $u(\cdot, 0)$ admits a discontinuity at $t={\tau}_2$ connecting the left state $u({\tau}_2^-, 0)$ with the right state $u({\tau}_2, 0)$, and having slope $\xi'_{{\tau}_1}({\tau}_2)$.
  If $u({\tau}_2,0)=\theta$, then it follows that $\xi'_{{\tau}_1}({\tau}_2)=0$,
  but there is no discontinuity with zero slope and right state $\theta$. Therefore, it must be
    $ u({\tau}_2,0)>\theta$ and then necessarily $\tau_2\leq\tau^*$. 
    This concludes the proof of the claim \eqref{eq:prop-decomp-4}.

  Finally observe that the interval $(\tau_1, \tau_2)$   is associated to 
  the discontinuity curve $(t,\xi_{{\tau}_1}(t))$ starting at $({\tau}_1, 0)$ and ending at 
  $({\tau}_2, 0)$.
  Since any solution admits at most countably many curves of discontinuity, repeating the 
  construction of the interval $({\tau}_1, {\tau}_2)$
  starting from any time
  $\tau\in (0,\tau^*)$, we deduce the existence
  of (at most) countably many intervals 
  $(\tau_{n,1},\tau_{n,2})\subset [0,\tau^*]$,
  $n\in\N$,
  enjoying the properties~\eqref{eq:prop-decomp-1}, \eqref{eq:prop-decomp-1b}.
  \smallskip

  \noindent
  {\bf Step 6. The computation of the total variation of $f(u(t,0))$ on a single interval $[\tau_{n,1}, \tau_{n,2}]$.} 

  Let $\tau_1<\tau_2\leq\tau^*$ be such that
  \eqref{eq:prop-decomp-4} holds. 

  We show that 
  \begin{equation}
  \label{eq:esttv-1}
  \begin{aligned}
      &\big|f(u(\tau_{2},0))-f(  u(\tau_{2}^-,0))\big|
      +\big|f(u(\tau_{1}^+,0))-f(u(\tau_{1},0))\big|
      \\
      \noalign{\smallskip}
      &\hspace{1.7in} \leq \big|f( u(\tau_{2},0))-f(u(\tau_{1},0))\big|
      + \tv_{(\tau_{1}, \tau_{2})} (f\circ k).
  \end{aligned}
  \end{equation}
  Observe that by property (i) at  {\bf Step 1}, and because of~\eqref{eq:prop-decomp-4}, 
  we have $f(u(\tau_{2}^-,0))=f(k(\tau_{2}^-))$,
  $f(u(\tau_{1}^+,0))=f(k(\tau_{1}))$.

  Notice that, by the analysis at   {\bf Step 5}, since there is a shock discontinuity at $(0,\tau_1)$ with nonnegative slope connecting the left state $u(\tau_1^+,0)$ with the right state $u(\tau_1,0)$, it follows that 
  \begin{equation}
  \label{fluxineq_1}
      f(u(\tau_1^+,0)) \leq f(u(\tau_1,0)).
  \end{equation}
  Similarly, we have
  \begin{equation}
  \label{fluxineq_2}
      f(u(\tau_2^-,0)) \geq f(u(\tau_2,0)),
  \end{equation}
  since at $(0,\tau_2)$
  there is a shock discontinuity with nonnegative slope connecting the left state $u(\tau_2^-,0)$ with the right state $u(\tau_2,0)$.
  Therefore, we have 
  \begin{equation*}
  \label{eq:esttv-2}
  \begin{aligned}
      &\big|f(u(\tau_{2},0))-f(  u(\tau_{2}^-,0))\big|
      +\big|f(u(\tau_{1}^+,0))-f(u(\tau_{1},0))\big|
      \\
      \noalign{\smallskip}
      &\hspace{1.5in} = f( u(\tau_{2}^-,0))-f(u(\tau_{1}^+,0))
      + f( u(\tau_{1},0))-f(u(\tau_{2},0))
      \\
      \noalign{\smallskip}
      &\hspace{1.5in} \leq \big|f( u(\tau_{2},0))-f(u(\tau_{1},0))\big|
      + \big|f( u(\tau_{2}^-,0))-f(u(\tau_{1}^+,0))\big|.
  \end{aligned}
  \end{equation*}
  This implies that ~\eqref{eq:esttv-1} holds.

  Now, we  compute the total variation. 
  Consider a $(N+1)$-tupla of times $t_0\leq \tau_1<t_1<\dots< t_{N-1}<\tau_2\leq t_N$, with 
  \begin{equation}
  \label{eq:TVest-1}
      u(t_0,0)>\theta,\qquad\qquad 
      u(t_N,0)>\theta.
  \end{equation}
  So, recalling \eqref{eq:esttv-1}, we compute 
  \begin{eqnarray}
  \nonumber 
  &&
      \sum_{k=0}^{N-1}
      \big|f(u(t_{k+1},0))-f(u(t_k),0)\big|\\ \nonumber 
  &\leq &   
      \big|f( u(t_N,0))-f(u(\tau_{2},0))\big|+\big|f( u(\tau_{1},0))-f(u(t_0,0))\\\nonumber 
      &&
      +
      \big|f( u(\tau_{2},0))-f(u(\tau_{2}^-,0))\big|
    +\tv_{(\tau_{1}, \tau_{2})} (f\circ k)+
      \big|f( u(\tau_{1}^+,0))-f(u(\tau_{1},0))\big| 
      \\\nonumber 
      &\leq  & \big|f( u(t_N,0))-f(u(\tau_{2},0))\big|+\big|f( u(\tau_{1},0))-f(u(t_0,0)) \big|
      \\ 
      &&
      +
      \big|f( u(\tau_{2},0))-f(u(\tau_{1},0))\big|
    +2\tv_{(\tau_{1}, \tau_{2})} (f\circ k). \label{eq:esttv-3}
  \end{eqnarray}
  \smallskip

  \noindent
  {\bf Step 7. Computation of the total variation of $f(u(t,0))$ on the entire interval $(0, \tau^*)$.}
  Let $\big\{\tau_{n,1}, \tau_{n,2}\big\}_n$,
  be as in~\eqref{eq:prop-decomp-1}, \eqref{eq:prop-decomp-1b}.

  We fix a  $(N+1)$-tupla of times $0\leq t_0< t_1<t_2<\dots< t_{N}\leq \tau^*$.

  Since $t_j$ are  finite, there exists a finite number $\bar n\geq 0$ of intervals $(\tau_{n,1}, \tau_{n,2})$ which contains at least one of the $t_j$. Note that $\bar n$ could also be $0$ (in particular this will always happen when \eqref{eq:condexistforwchar} for any $t>0$). 

  From now on we assume $\bar n>0$, the case $\bar n=0$ can be treated in an analogous (and simpler) way. 
  Up to relabeling the indexes we may assume that this holds for $n=1,\dots, \bar n$, with $\tau_{n,1}<\tau_{n+1, 2}$ for all $n=1, \dots, \bar n-1$.

  Therefore, for any $n\in \{1, \dots, \bar n\}$, we have that  the intersection 
  $ (\tau_{n,1}, \tau_{n,2})\cap\{t_0, \dots, t_N\}$ is not empty and contains $m_n\geq 1$ elements, that up to relabeling we denote as the ordered sequence  \[\{t_{n,1}, t_{n, 2}, \dots, t_{n, m_{n}}\}=(\tau_{n,1}, \tau_{n,2})\cap\{t_0, \dots, t_N\}.\]
  In particular, we have that
  \[u(t_{n, j}, 0)\leq\theta,\qquad  j=1,\dots, m_n,\ n=1,\dots, \bar n.\]

  Moreover, for every $n=1, \dots,\bar n-1$
  there exists a finite number $p_n\geq 0$ of $t_j$ which are contained in $[\tau_{n, 2}, \tau_{n+1, 1}]$. 
  Let us denote them as the ordered sequence 
  \begin{equation*}
    \{t_{n,n+1,1},  \dots, t_{n, n+1,p_{n}}\}=[\tau_{n,2}, \tau_{n+1,1}]\cap\{t_0, \dots, t_N\},\qquad n=1, \dots, \bar n-1.    
  \end{equation*}
  Finally, there are at most $p_0\geq 0$ times $t_j$ such that $t_{j}\leq \tau_{1,1}$. We label them as $\{t_{0,1, 1},\dots, t_{0,1,p_0}\}$. Analogously there are  at most $p_{\bar n}\geq 0$ times $t_j$ such that $t_{j}\geq \tau_{\bar n,2}$, and we label them as $\{t_{\bar n,\bar n+1, 1},\dots, t_{\bar n,\bar n+1,p_{\bar n}}\}$ (maintaining the order). 

  In particular, we have that
  \[u(t_{n,n+1, j}, 0)>\theta, \qquad j=1,\dots, p_n, \ n=0, \dots, \bar n+1.\]

  In conclusion, we have relabeled the $(N+1)-$tupla of times as follows: 
  \[\{t_0, t_1, \dots, t_k, \dots t_N\}= 
  \bigcup_{n=0}^{\bar n+1}\bigcup_{j=1}^{p_n}\{t_{n,n+1, j}\}\cup \bigcup_{n=1}^{\overline n} \bigcup_{j=1}^{m_n}\{t_{n, j}\}.
  \]
  Then, using this relabelling and applying estimate~\eqref{eq:esttv-3} at the previous  {\bf Step 6}, we find
  \begin{eqnarray}
  \label{eq:esttv-4}
  && \sum_{k=0}^{N-1} |f(u(t_{k+1},0))-f(u(t_k,0))|
  \\\nonumber  &\leq & \sum_{n=0}^{\bar n+1}\sum_{j=1}^{p_n-1}|f(u(t_{n,n+1,j+1},0))-f(u(t_{n,n+1, ,j},0))| 
      \\ \nonumber 
      && + \sum_{n=0}^{\overline n-1}
      |f(u(\tau_{n+1,1},0))-f(u(t_{n,n+1, p_n},0))|
      + \sum_{n=1}^{\overline n+1}
      |f(u(\tau_{n,2},0))-f(u(t_{n,n+1, 1},0))|\\
      \nonumber && + \sum_{n=1}^{\overline n}
      |f(u(\tau_{n,1},0))-f(u(\tau_{n,2},0))|
      + 2 \sum_{n=1}^{\overline n} 
      \tv_{(\tau_{n,1}, \tau_{n,2})} (f\circ k).
      \end{eqnarray}
  Let  $\vartheta_{n,n+1, j}(t)$
  be the maximal backward characteristic starting at $(t_{n,n+1,j}, 0)$, for $j=1, \dots, p_n$ and  $n=0, \dots, \bar n+1$,   and denote   $x_{n,n+1,j}=\vartheta_{n,n+1,j}(0)$. Then the  initial datum $\overline u$ will be  either   continuous at $x_{n, n+1, j}$, or it will 
  admit at $x_{n,n+1, j}$ a downward jump
  $\overline u(x_{n,n+1,j}^-)>\overline u(x_{n,n+1,j}^+)$ generating a rarefaction.  
In particular, for every $j=1,\dots, p_n-1$, we have 
 \begin{equation}\label{eq:flux-from-indatum-1}  |f(u(t_{n,n+1, j}, 0))-f(u(t_{n,n+1,j+1},0))|\leq \|f'(\overline u)\|_\infty \tv_{[x_{n,n+1,j}, x_{n,n+1,j+1}]}(\overline u),\end{equation} where $\tv_{[a,b]}(\overline u)$ is defined in \eqref{tvchiusa}. 
  
  Similarly, we may consider the 
  maximal backward characteristic $\zeta_{n,i}$ starting at $(\tau_{n,i}, 0)$, $i=1,2$,
  set $z_{n,i}=\zeta_{n,i}(0)$, $i=1,2$,
  and conclude that
  \begin{eqnarray}
 \nonumber 
      &|f(u(\tau_{n+1,1},0))-f(u(t_{n,n+1, 
      p_n},0))| &\leq \|f'(\overline u)\|_\infty\tv_{[z_{n+1,1}, x_{n,n+1,p_n}]}(\overline u),\\\nonumber 
     & |f(u(\tau_{n,2},0))-f(u(t_{n,n+1, 1},0))|&\leq 
     \|f'(\overline u)\|_\infty\tv_{[z_{n,2}, x_{n,n+1,1}]}(\overline u),\\
      &|f(u(\tau_{n,1},0))-f(u(\tau_{n,2 },0))|&
      \leq \|f'(\overline u)\|_\infty \tv_{[z_{n,2}, z_{n,1}]}(\overline u). \label{eq:flux-from-indatum-2}
  \end{eqnarray}
If $x_{n,n+1,j}\neq x_{n,n+1,j+1}\neq z_{n,i}$ for all $i,j$,  relying on~\eqref{eq:flux-from-indatum-1}, \eqref{eq:flux-from-indatum-2},
  we derive from~\eqref{eq:esttv-4} that
  \begin{equation}
  \label{eq:esttv-5}
      \sum_{k=0}^{N-1}
      \big|f(u(t_{k+1},0))-f(u(t_k,0))\big|
      \leq \|f'(\overline u)\|_\infty\tv_{(0, +\infty)} (\overline{u}) +2\tv_{(0, \tau^*)} (f\circ k).
  \end{equation}
 Note that it could happen that $x_{n,n+1,j-1}<x_{n,n+1,j}=x_{n,n+1,j+k}<x_{n,n+1,j+k+1}$ for some $j, k>0$. In this case, instead of relying on the estimates in~\eqref{eq:flux-from-indatum-1}, we 
 estimate all together 
 \begin{equation}
     \label{eq:flux-from-indatum-3} \sum_{i=j-1}^{j+k}
\big|f(u(t_{n,n+1, i}, 0))-f(u(t_{n,n+1,i+1},0))\big|\leq \big\|f'(\overline u)\big\|_\infty 
 \tv_{[x_{n,n+1,j-1},\, x_{n,n+1,j+k+1}]}(\overline u). 
 \end{equation}
Similarly, we treat the cases in which $z_{n,2}<x_{n,n+1,1}=x_{n,n+1,1+k}<x_{n,n+1,2+k}$, for some  $k$, getting
\[\big|f(u(\tau_{n,2}, 0))-f(u(t_{n,n+1,1},0))\big|+\sum_{i=1}^{2+k}
\big|f(u(t_{n,n+1, i}, 0))-f(u(t_{n,n+1,i+1},0))\big| \]\[\leq   \big\|f'(\overline u)\big\|_\infty 
 \tv_{[z_{n,2},\, x_{n,n+1,2+k}]}(\overline u)
 \]
or $x_{n,n+1,j-1}<x_{n,n+1,j}=x_{n,n+1,j+k}<x_{n,n+1,j+k+1}<z_{n+1,1}$, for some $j$, or 
$z_{n,2}=z_{n,1}$ or $z_{n,2}=x_{n,n+1,1}$ or $z_{n+1,1}=x_{n, n+1, p_n}$ for some $n$.
Thus, in any case,
combining~\eqref{eq:flux-from-indatum-1}, \eqref{eq:flux-from-indatum-2}with~\eqref{eq:flux-from-indatum-3}, we derive the bound~\eqref{eq:esttv-4}.
  \smallskip

  \noindent
  {\bf Step 8. Conclusion.}

  Given any $(N+1)$-tupla of times $t_0< t_1<t_2<\dots< t_{N}$,
  combining~\eqref{eq:tvf-tau^*} with \eqref{eq:esttv-5}
  we deduce
  \begin{align*}
      \sum_{k=0}^{N-1}
      \big|f(u(t_{k+1},0))-f(u(t_k,0))\big|
      \leq \|f'(\overline u)\|_\infty \tv_{(0, +\infty)} (\overline{u}) +2\tv_{(0, +\infty)} (f\circ k)\\+\big|f(u((\tau^*)^+,0))-f(u(\tau^*,0))\big|\,.
  \end{align*}
  From this, recalling estimates \eqref{eq:tvf-tau^*} and  \eqref{osc-est},  we get  the proof of the theorem. 
\end{proof}

\begin{remark}\upshape
  \label{sharphyp-bv-fluxtrace}
  We observe that the assumptions in \Cref{bvfluxbdrytrace} are sharp.

  Observe that if   $f\circ k\not\in \mathbf{BV}((0, +\infty); \R)$  then the result is not true. 
  
  Let 
  $k \in \mathbf{L^\infty}\left((0,+\infty)\,; (-\infty,\theta]\right)$, with
  $f\circ k\not\in \mathbf{BV} ((0,+\infty); \R)$,
  and $\overline u \in \mathbf{BV}\left((0,+\infty)\,; (-\infty,\theta]\right)$.
  Then the 
  entropy   admissible  weak solution  $u$ of the initial-boundary value problem~\eqref{eq:ibvp}
  will take values in $(-\infty,\theta]$
  as well, and we have
  $u(\cdot , 0)=k$; see~\cite{lf}.
  Therefore, in this case we have $f\circ u(\cdot, 0)=f\circ k\not\in \mathbf{BV} ((0,+\infty); \R)$.

  Next, we show with a counterexample that if $ \overline u\not\in \mathbf{BV}((0, +\infty); \R)$  
  then the result is not true. Consider the  flux $f(u)=-\frac{u^2}{2}$, the boundary datum $k(t)\equiv 0$ and 
  the  initial datum
  \begin{equation}
    \overline{u}(x)\doteq
    \begin{cases}
      \frac{1}{2}, & x\in \left(\frac{4}{3}
      \frac{1}{2^n}, \frac{1}{2^{n-1}}\right), 
      n\geq 1,\\
      1, &\text{otherwise}. 
    \end{cases}
  \end{equation}

  Clearly we have  $ \overline u\not\in \mathbf{BV}(0, +\infty)$. By direct computation one can check that the trace of the solution satisfies:
  \[
    u(t, 0)=
    \begin{cases}
      1, & t>\frac{4}{3},\\
      \frac{2}{3 2^{n-1} t}, & t\in \left(\frac{4}{3}
      \frac{1}{2^n}, \frac{4}{3}\frac{1}{2^{n-1}}\right), 
      n\geq 1;
    \end{cases}
  \]
  see \Cref{fig:ex-appendice}.
  In particular observe that
  \[u\left(\left(\frac{4}{3}
  \frac{1}{2^n}\right)^+, 0\right)=1\qquad u\left(\left(\frac{4}{3}
  \frac{1}{2^{n-1}}\right)^-, 0\right)=\frac{1}{2}\]
  which implies immediately that 
  \[
    \tv_{(0,4/3)} u(t,0)\geq\sum_{n=0}^{+\infty} \left(1-\frac{1}{2}\right)=+\infty 
  \] 
  and similarly 
  \[
    \tv_{(0,4/3)} f(u(t,0))\geq\sum_{n=0}^{+\infty} \left(\frac{1}{2}-\frac{1}{8}\right)=+\infty. 
  \] 
  Therefore, $f(u(\cdot, 0))\not\in \mathbf{BV}((0,4/3); \R)$.

  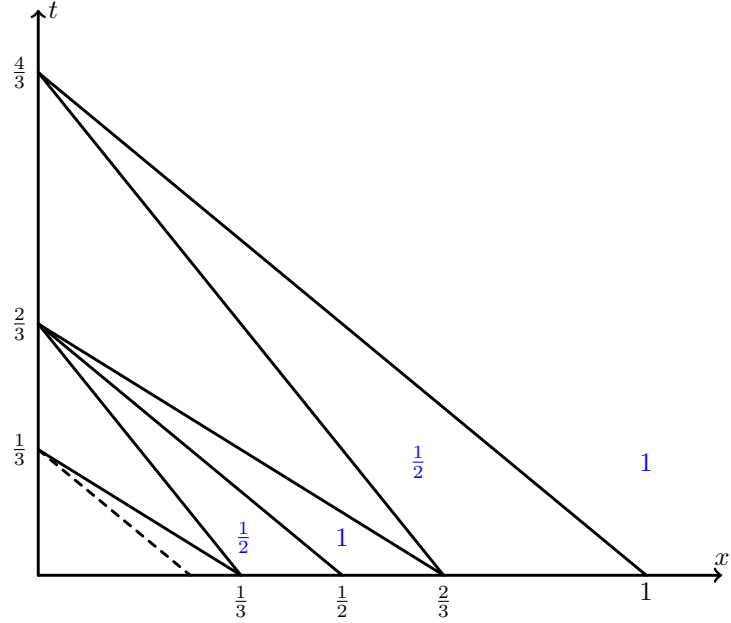
\begin{figure}
    \centering
    \begin{tikzpicture}[line cap=round,line join=round,x=1.cm,y=1.cm]
  
      \draw[->, line width=1.1pt] (2., .5) -- (2., 8.) node[right]{$t$};
      \draw[->, line width=1.1pt] (2., .5) -- (11., .5) node[above]{$x$};

      \node[inner sep=0, anchor=north] at (10, 0.4) {$1$};

      \draw[line width=1.1pt] (10., .5) -- (2., 0.5+20/3) node[left]{$\frac{4}{3}$} 
      -- (2+16/3, .5) node[below]{$\frac{2}{3}$}
      -- (2., 0.5+10/3) node[left]{$\frac{2}{3}$}
      -- (2+8/3, 0.5) node[below]{$\frac{1}{3}$}
      -- (2., 0.5+5/3) node[left]{$\frac{1}{3}$};

      \draw[line width=1.1pt, dashed] (2., 0.5+5/3) -- (2+2, 0.5);

      \draw[line width=1.1pt] (2, .5+10/3) -- (2+4, 0.5) node[below]{$\frac{1}{2}$};

      \node[inner sep=0] at (10, 2) {\color{blue}$1$};
      \node[inner sep=0] at (6, 1) {\color{blue}$1$};
      \node[inner sep=0] at (7, 2) {\color{blue}$\frac{1}{2}$};
      \node[inner sep=0] at (4.7, 1) {\color{blue}$\frac{1}{2}$};

    \end{tikzpicture}
    \caption{The solution of \Cref{sharphyp-bv-fluxtrace}. 
      At the positions $x = \frac{1}{2^{n-1}}$,
      with $n \ge 1$, shock waves with speed $-\frac{3}{4}$ are originated.
      Instead at the positions $x = \frac{4}{3}\, \frac{1}{2^{n}}$,
      with $n \ge 1$, rarefaction waves are originated.
      All these waves do not intersect together inside the region $x>0$.}
    \label{fig:ex-appendice}.
  \end{figure}
\end{remark}

\section*{Acknowledgments}

All the authors are members of the Gruppo Nazionale per l'Analisi Matematica,
la Probabilità e le loro Applicazioni (GNAMPA)
of the Istituto Nazionale di Alta Matematica (INdAM).
\\
FA~is partially supported by PRIN PNRR P2022XJ9SX ``Heterogeneity on the road - modeling, analysis, control'' of the European Union - Next Generation EU.\\
AC is partially supported by the INdAM - GNAMPA Project “Matematizzare la citt\`a: mean-field games e modelli di traffico”, CUP code E5324001950001, and by Project 2022W58BJ5 (subject area: PE - Physical Sciences and Engineer-
ing) “PDEs and optimal control methods in mean field games, population dynamics and multi-agent models”.\\
 GMC has been partially
supported by the Project funded under the National Recovery and
Resilience Plan (NRRP), Mission 4 Component 2 Investment 1.4 -Call for
tender No. 3138 of 16/12/2021 of MUR funded by the
EU-NextGenerationEUoAward Number: CN000023, Concession Decree No. 1033
of 17/06/2022 adopted by the MUR, CUP: D93C22000410001, Centro
Nazionale per la Mobilit\`a Sostenibile, the MUR under the Programme
Department of Excellence Legge 232/2016 (Grant No. CUP -
D93C23000100001), and the Research Project of National Relevance
``Evolution problems involving interacting scales'' granted by the MUR
(Prin 2022, project code 2022M9BKBC, Grant No. CUP D53D23005880006).\\
MG is partial supported by the
PRIN~2022 project \emph{Modeling, Control and Games through Partial
  Differential Equations} (CUP~D53D23005620006), funded by the
European Union - Next Generation EU, by the
European Union-NextGeneration EU (National Sustainable Mobility Center
CN00000023, Italian Ministry of University and Research Decree
n.~1033-17/06/2022, Spoke 8) and by the Project funded under the
National Recovery and Resilience Plan (NRRP) of Italian Ministry of
University and Research funded by the European Union-NextGeneration
EU. Award Number: ECS\_00000037, CUP: H43C22000510001,
MUSA-Multilayered Urban Sustainability Action.

%
%

\end{document}